\newtheorem{lem}{Lemma}[section]
\newtheorem{thm}[lem]{Theorem}
\newtheorem{prop}[lem]{Proposition}
\newtheorem{cor}[lem]{Corollary}
\theoremstyle{definition}
\newtheorem{exa}[lem]{Example}
\newtheorem{rem}[lem]{Remark}
\newcommand{\Q}{\Bbb{Q}}
\newcommand{\F}[1]{\Bbb{F}_{#1}}
\newcommand{\Z}{\Bbb{Z}}
\newcommand{\R}{\Bbb{R}}
\newcommand{\wn}[1]{\mathcal{W}_{#1}}
\newcommand{\spl}[2]{\operatorname{SL}_{#1}\left(#2\right)}
\newcommand{\gl}[2]{\mathrm{GL}_{#1}(#2)}
\newcommand{\pgl}[2]{\mathrm{PGL}_{#1}(#2)}
\newcommand{\pn}[2]{\mathbb{P}^{#1}(#2)}
\newcommand{\projl}[1]{\pn{1}{#1}}
\renewcommand{\ker}[1]{\mathrm{Ker}(#1)}
\newcommand{\an}[1]{\left\langle{#1}\right\rangle}
\newcommand{\matr}[4]{\left[\begin{array}{cc}
#1&#2\\
#3&#4\\
\end{array}
\right]}
\newcommand{\st}[2]{\mathrm{St}(#1,#2)}
\newcommand{\St}[2]{\mathrm{St}(#1,#2)}
\newcommand{\stand}[1]{\mathrm{st}(#1)}
\title{$\mathrm{GE}_2$-Rings and a graph of unimodular rows}
\author{  Kevin Hutchinson}
\address{School of Mathematics and Statistics,
 University College Dublin}
\email{  kevin.hutchinson@ucd.ie}
\date{\today}
\keywords{special linear group, elementary matrices, $K$-theory
}
\subjclass{19C20,20G30}
\begin{document}
\maketitle
\begin{abstract}
For a commutative ring $A$ we consider a related graph, $\Gamma(A)$, whose vertices are the unimodular rows of length $2$ up to multiplication by units.
We prove that $\Gamma(A)$ is path-connected if and only if $A$ is a $\mathrm{GE}_2$-ring, in the terminology of P. M. Cohn. Furthermore, 
if $Y(A)$ denotes the clique complex of $\Gamma(A)$, we prove that $Y(A)$ is simply connected if and only if $A$ is  universal for $\mathrm{GE}_2$.
More precisely, our main theorem is that for any commutative ring $A$ the fundamental group of $Y(A)$ is isomorphic to the group  $K_2(2,A)$ modulo the subgroup 
generated by symbols. 
\end{abstract}
\section{Introduction}
For a commutative ring $A$ we consider the following functorially associated graph $\Gamma(A)$: The vertices of $\Gamma(A)$ are unimodular rows $(a,b)$ up to multiplication by units.
Two (equivalence classes of) rows form an edge of $\Gamma(A)$ if they are the rows of a matrix in $\gl{2}{A}$. Let $Y(A)$ be the clique complex of $\Gamma(A)$. Thus $Y(A)$ is the simplicial complex whose set of $n$-simplices $Y_n(A)$ is the set of $n$-cliques (complete subgraphs on $n$ vertices) of $\Gamma(A)$. For reasons detailed  below in this introduction, we wished to determine the class of
 rings $A$ for which $Y(A)$ is homologically $1$-connected and the investigations in this paper were motivated by this question.

By considering paths from $\infty:=(1,0)$, it is not too difficult to see that the space $Y(A)$, or equivalently the graph $\Gamma(A)$, is path-connected if and only if $\spl{2}{A}$ is generated 
by elementary matrices; i.e., if and only if $A$ is a $\mathrm{GE}_2$-ring in the terminlogy of P. M. Cohn (\cite{cohn:gln}). More precisely, we show (Theorem \ref{thm:pi0})  that the set of path components,
$\pi_0(Y(A))$, of $Y(A)$ is naturally in bijective correspondence with the coset space $E_2(A)\backslash \spl{2}{A}$, where $E_2(A)$ is the subgroup generated by elementary matrices. 
We recall  that while every Euclidean Domain is a $\mathrm{GE}_2$-ring, there are many examples of PIDs which are not $\mathrm{GE}_2$-rings. In Section \ref{sec:path} below we also 
show how paths from $\infty$ in $\Gamma(A)$ correspond to weak Euclidean algorithms for unimodular pairs and, when $A$ is an integral domain with field of fractions $F$, to the existence of continued 
fraction expansions of elements of $F$. 

The fundamental group of $Y(A)$, on the other hand, is, as we show,  related to Cohn's notion of rings which are \emph{universal for $\mathrm{GE}_2$}. The group $E_2(A)$ is generated by the matrices $E(a):=\left[
\begin{array}{cc}
a&1\\
-1&0\\
\end{array}
\right], a\in A$
and Cohn (\cite{cohn:gln})  writes down certain universal relations satisfied by these matrices. The ring $A$ is, by definition,  \emph{universal for $\mathrm{GE}_2$}, if these generators and relations give a presentation of the group $E_2(A)$. In section \ref{sec:univ} below, we consider the group $C(A)$ described by this presentation (in the case of \emph{commutative} $A$) and the kernel
$U(A)$,  of the surjective homomorphism $C(A)\to E_2(A)$. Thus, by definition, $A$ is universal for $\mathrm{GE}_2$ if and only if $U(A)=1$. Our main theorem (Theorem \ref{thm:main}) gives an explicit isomorphism, with explicit inverse, from the fundamental group $\pi_1(Y(A),\infty)$ to the group $U(A)$. It follows that $Y(A)$ is simply connected if and only if 
$A$ is universal for $\mathrm{GE}_2$. The essential work in the article consists in preparing the ground for the
proof of the main  theorem by writing down a presentation for this fundamental group.

In fact, it is well-known that, for a commutative ring $A$, the condition of being universal for $\mathrm{GE}_2$ is equivalent to the condition that the rank one $K_2$ group 
$K_2(2,A)$ is generated by symbols; i.e., that $K_2(2,A) = C(2,A)$ where $C(2,A)$ is the subgroup generated by the set of symbols $\{ c(u,v)\ |\ u,v\in A^\times\}$. However, I am not aware that a proof of this result has been published.  For example, Dennis and Stein state this fact on page 228 of \cite{dennisstein:dvr} and refer to unpublished notes for the proof. Other articles which use this result generally cite Dennis-Stein. Since we  use this result in an essential way below, we have added, for our own convenience and that of the reader, an appendix in which we verify that there is a natural isomorphism $U(A)\cong K_2(2,A)/C(2,A)$
for any commutative ring $A$ (Theorem \ref{thm:gamma}). In view of this isomorphism, our main theorem also states that the fundamental group of $Y(A)$ is isomorphic to 
$K_2(2,A)/C(2,A)$.

We now describe our original motivation for studying the questions addressed in this article. To a commutative ring $A$ we associate a complex of abelian groups $L_\bullet(A)$ as follows:
Let $X_n(A)$ denote the set all ordered $(n+1)$-tuples $(x_0,\ldots,x_n)$ where $\{ x_0,\ldots,x_n\}\in Y_n(A)$ is an $(n+1)$-clique of $\Gamma(A)$. Let $L_n(A):=\Z[X_n(A)]$ and 
let $d_n: L_n(A)\to L_{n-1}(A)$ be the standard simplicial boundary homomorphism. We also have an augmentation $\epsilon:L_0(A)\to \Z, (x_0)\mapsto 1$.  The complex $L_\bullet(A)$ is naturally a complex of right modules over the group $\spl{2}{A}$, and has proved very useful in calculating the low-dimensional homology of this group, when $A$ is a field or local ring,  in terms of refined Bloch groups (\cite{hut:rbl11},\cite{hut:slr},\cite{hut:sl2Q}). With R. C. Coronado, we have investigated the possibility of extending these results and methods  to more general rings,
such as local rings with small residue fields, principal ideal domains and rings of integers  (\cite{corohut:bloch}). We have found that in order for the Bloch groups of a ring, appropriately defined,  to have the desired relationship to $H_3(\spl{2}{A},\Z)$ or to the indecomposable $K_3$ of $A$,  we must have that the complex
\[
L_2(A)\to L_1(A)\to L_0(A)\to \Z\to 0
\]
is exact; i.e., that $H_0(L_\bullet(A))\cong \Z$ and $H_1(L_\bullet(A))=0$. However,  there is a natural map of complexes $L_\bullet(A)\to C_\bullet(Y(A))$ (the oriented chain complex of $Y(A)$) and it is not hard to see that this induces an isomorphism on homology in dimensions $0$ and $1$\footnote{On the other hand, $H_2(L_\bullet(\Z))\not=0$ while $H_2(C_\bullet(Y(\Z)))=H_2(Y(\Z))=0$.}. Thus our question becomes: for which rings $A$ is $Y(A)$ homologically $1$-connected. Since $H_1(Y(A))\cong \pi_1(Y(A),\infty)^{\mathrm{ab}}$ 
our main theorem gives an answer to this question in terms of the $K$-theory group $K_2(2,A)$. The $K_2$ calculations of Morita in \cite{morita:k2zs} show that the Euclidean domains $\Z[\frac{1}{m}]$ are universal for $\mathrm{GE}_2$ for many integers  $m$ which are divisible by $2$ or $3$ (Example \ref{exa:pi1m}). However, Morita's calculations in \cite{morita:mab} can be used to show that for any prime $p\geq 5$, $H_1(L_\bullet(\Z[\frac{1}{p}])\not=0$ and they even allow us to write down an explicit short $1$-cycle in $L_1(\Z[\frac{1}{p}])$ which represents a homology class of infinite order (Example 
\ref{exa:pi1p}). 

\textbf{Acknowledgement.} We thank the anonymous referee for a number of valuable suggestions and corrections which have been incorporated into the current version of the article.

\subsection{Notation and conventions}
\emph{All rings below are commutative.}  For a ring $A$, $A^\times$ denotes its group of units. 

For a simplicial complex $Y= \{ Y_n\}_n$, $|Y|$ will denote its geometric realization with the weak topology.
\section{The  graph $\Gamma(A)$}
Let $A$ be a  ring and let $\Gamma(A)$ be the following associated graph: The vertices of $\Gamma(A)$ are equivalence classes, $[u]$, of unimodular rows $u=(u_1,u_2)\in A^2$ under scalar multiplication by units in $A$. The pair $\{ [u],[v]\}$ is an edge in $\Gamma(A)$ if the matrix 
\[
M=
\left[
\begin{array}{c}
u\\
v\\
\end{array}
\right]
\]
lies in $\gl{2}{A}$; i.e., if $\mathrm{det}(M)\in A^\times$. Let $Y_0(A)$ denote the set of vertices of $\Gamma(A)$ and let $Y_1(A)$ denote the set of edges. Observe that both of these sets are naturally right $\gl{2}{A}$-sets (or, indeed, $\pgl{2}{A}$-sets) via right multiplication by matrices. 

More generally, we let $Y(A)$ denote the \emph{clique complex} of the graph $\Gamma(A)$. This is the simplicial complex whose set of  $n$-simplices, $Y_n(A)$,  is the set of  $(n+1)$-cliques of $\Gamma(A)$. Recall that an $n$-clique is a set of $n$ distinct vertices $\{ [u_1],\ldots, [u_n]\}$  with the property that every pair is an edge of $\Gamma(A)$.  Let $X_n(A)$ denote the set all ordered $(n+1)$-tuples $(x_0,\ldots,x_n)$ where $\{ x_0,\ldots,x_n\}\in Y_n(A)$ is an $(n+1)$-clique of $\Gamma(A)$.

\subsection{The vertices of $\Gamma(A)$}
We will use the following notations: For any $a\in A$, $a_+$ denotes (the class of)  $(a,1)$ in $Y_0(A)$ and $a_-$ denotes the class of $(1,a)$. Thus if $a$ is a unit then $a_-=(a^{-1})_+$ in $Y_0(A)$. Furthermore, we set
\[
0:=0_+=(0,1),\quad \infty: = 0_-=(1,0),\quad 1:=1_+=1_-,\quad -1:= (-1)_+=(-1)_- \quad \mbox{ in } Y_0(A).
\] 

The action of $\gl{2}{A}$ on $Y_0(A)$ is transitive: If $u$ is any unimodular row then, by definition, there exists a unimodular row $v$ such that 
$
X:=\left[
\begin{array}{c}
u\\
v\\
\end{array}
\right]
\in \gl{2}{A}.
$
 Then $\infty\cdot X=[u]$. The stabilizer of $\infty$ is the subgroup $\tilde{B}=B(\gl{2}{A})$ of lower triangular matrices in $\gl{2}{A}$. Furthermore, $\spl{2}{A}$ acts transitively on the vertices of $\Gamma(A)$ and the stabilizer of $\infty$ is $B=B(\spl{2}{A}:=\tilde{B}\cap\spl{2}{A}$.

Now  let $A$ be an integral domain with field of fractions $F$. We will say that $(0,0)\not=(a,b)\in A\times A$ is a \emph{B\'ezout pair} if the ideal $\an{a,b}$ is principal.
 Recall that $A$ is said to be a \emph{B\'ezout domain} if every nonzero pair is a B\'ezout pair; equivalently, if every finitely-generated ideal is principal.

We will call an  element $x\in \projl{F}=F\cup\{ \infty\}$  a \emph{B\'ezout point}  if $x=a/b$ for some B\'ezout pair $(a,b)$.  (Note that if $(a,b)$ is  B\'ezout pair and if 
$a/b=a'/b'$ in $\projl{F}$, then $(a',b')$ is a B\'ezout pair. )

\begin{rem}  Of course, the notion of B\'ezout point or B\'ezout pair depends on the choice of subring $A$ of the field $F$.  If necessary, we will refer to an \emph{$A$-B\'ezout point} or 
an \emph{$A$-B\'ezout pair}.
\end{rem}
\begin{lem} \label{lem:bezout}
Let $A$ be an integral domain with field of fractions $F$. There is a natural injective map from $Y_0(A)$ to $\projl{F}$ sending $\bar{u}=(a,b)\to \frac{a}{b}$, whose
image is the set of B\'ezout points. 
\end{lem}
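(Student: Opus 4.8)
The plan is to verify directly that $(a,b)\mapsto a/b$ descends to a well-defined map on $Y_0(A)$, and then to establish injectivity and identify the image by elementary manipulations inside the domain $A$.

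First I would check that the map is well-defined. If $(a,b)$ is a unimodular row then $ax+by=1$ for some $x,y\in A$, so in particular $(a,b)\neq(0,0)$ and the element $a/b\in\projl{F}$ (with the convention $a/0=\infty$) makes sense. Replacing $(a,b)$ by $\lambda(a,b)$ for a unit $\lambda\in A^\times$ replaces $a/b$ by $\lambda a/\lambda b=a/b$, so we obtain a map $Y_0(A)\to\projl{F}$ that depends on no choices; this is the asserted natural (indeed $\gl{2}{A}$-equivariant) map.

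Next, injectivity. Suppose $(a,b)$ and $(c,d)$ are unimodular rows with $a/b=c/d$ in $\projl{F}$, equivalently $ad=bc$ in $A$. Pick $x,y\in A$ with $ax+by=1$ and set $\lambda:=cx+dy$. Using $ad=bc$ one computes $\lambda a = cxa+dya = cxa+cby = c(ax+by)=c$ and similarly $\lambda b = cxb+dyb = dax+dyb = d(ax+by)=d$, so $(c,d)=\lambda(a,b)$. Now pick $x',y'\in A$ with $cx'+dy'=1$; then $1=\lambda(ax'+by')$, so $\lambda\in A^\times$, and hence $(a,b)$ and $(c,d)$ define the same vertex of $\Gamma(A)$. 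For the image, any unimodular $(a,b)$ satisfies $\an{a,b}=A=\an{1}$, so it is a B\'ezout pair and $a/b$ is a B\'ezout point. Conversely, if $x=a/b$ with $\an{a,b}=\an{g}$ (and $g\neq 0$ since $(a,b)\neq(0,0)$), write $a=ga'$, $b=gb'$ and $g=ra+sb$; cancelling $g$ in $g=g(ra'+sb')$ gives $ra'+sb'=1$, so $(a',b')$ is unimodular with $a'/b'=a/b=x$, and $x$ is in the image.

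I do not expect a serious obstacle here. The only point demanding a little care is that the multiplier $\lambda$ constructed in the injectivity step is genuinely a \emph{unit} of $A$: this is exactly where one must use the unimodularity of the \emph{second} row $(c,d)$, not merely that of $(a,b)$.
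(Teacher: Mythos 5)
Your proof is correct, and the overall skeleton (well-definedness, injectivity, identification of the image) matches the paper's; the interesting divergence is in the injectivity step. The paper argues via divisibility: from $ad=bc$ and $ra+sb=1$ it deduces $a\mid c$, symmetrically $c\mid a$, and then invokes the fact that mutually dividing elements of a domain are unit multiples --- an argument that tacitly needs the case $a=0$ handled separately and leans on integrality. You instead exhibit the scaling unit explicitly as $\lambda=cx+dy$, verify $\lambda(a,b)=(c,d)$ by a two-line computation using only $ad=bc$, and certify that $\lambda$ is a unit by pairing it against a unimodularity witness for $(c,d)$. This is uniform (no case split at $a=0$ or $b=0$), slightly more constructive, and notably does not use the integral-domain hypothesis at all for injectivity --- only for translating $a/b=c/d$ into $ad=bc$ and, later, for cancelling $g$ in the image computation. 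Your closing remark correctly isolates the one delicate point: the unit certificate for $\lambda$ must come from unimodularity of $(c,d)$, not of $(a,b)$. The image argument is essentially the paper's, run in the same two directions (unimodular rows are Bézout pairs with $\an{a,b}=\an{1}$; conversely dividing out a generator $g$ of $\an{a,b}$ produces a unimodular representative of the same point of $\projl{F}$). No gaps.
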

\begin{proof} The map $Y_0(A)\to \projl{F}$, sending the class of  $(a,b)$ to $ a/b$ is clearly  well-defined. Suppose that $(a,b),(c,d)$ are both unimodular and that $a/b=c/d$. 
Then $ad=bc$. So $a| bc$. Since there exist $r,s\in A$ with $ra+sb=1$, it follows that $a|c$. Similarly, $c|a$. Thus $c=ua$ for some $u\in A^\times$. It follows that 
$b=ud$ and hence $(a,b)=(c,d)$ in $Y_0(A)$. 

Suppose now that $(a,b)$ is a B\'ezout pair.  Let $q=a/b\in F$. Suppose that $\an{a,b}=\an{c}$ for $c\in A$. 
Then $a=a'c$ and $b=b'c$ for some $a',b',\in A$ with $(a',b')$ unimodular. So $a/b=a'/b'$ is the image of (the class of) $(a',b')\in Y_0(A)$.

Conversely, suppose that $a/b$ lies in the image of this map. Then $a/b=a'/b'$ for some $a',b'$ such that $(a',b')$ is unimodular. Since $a'|ab'$, it follows that $a'|a$. So $a=ca'$ for some $c\in A$.
We deduce that $b=cb'$ also. Hence $\an{a,b}=\an{ca',cb'}=\an{c}\an{a',b'}=\an{c}$ and $(a,b)$ is a B\'ezout pair.
\end{proof}

\begin{cor} 
If $A$ is a B\'ezout domain with field of fractions $F$ then the set of vertices, $Y_0(A)$, of $\Gamma(A)$ is naturally identified with $\projl{F}$.
\end{cor}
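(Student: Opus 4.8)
The plan is to deduce this immediately from Lemma \ref{lem:bezout}: that lemma already supplies a natural injective map $Y_0(A)\to\projl{F}$, $(a,b)\mapsto a/b$, whose image is exactly the set of B\'ezout points, so the only thing left to verify is that when $A$ is a B\'ezout domain every point of $\projl{F}$ is a B\'ezout point, i.e. that this map is onto.

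First I would dispose of $\infty$: since $\an{1,0}=A=\an{1}$ is principal, $(1,0)$ is a B\'ezout pair and hence $\infty=1/0$ is a B\'ezout point (it is of course just the vertex $0_-=\infty\in Y_0(A)$). Next, for an arbitrary $x\in F$, using that $F$ is the field of fractions of $A$ I would write $x=a/b$ with $a,b\in A$ and $b\neq 0$. If $a=0$ then $x=0=0/1$ and $(0,1)$ is unimodular, hence trivially a B\'ezout pair; if $a\neq 0$ then $(a,b)$ is a nonzero element of $A\times A$, and since $A$ is a B\'ezout domain the finitely generated ideal $\an{a,b}$ is principal, so $(a,b)$ is a B\'ezout pair and $x=a/b$ is a B\'ezout point. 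This shows the image of the map of Lemma \ref{lem:bezout} is all of $\projl{F}$.

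Combining this surjectivity with the injectivity and naturality already established in Lemma \ref{lem:bezout} then yields the asserted natural identification $Y_0(A)\cong\projl{F}$; one can moreover observe that this bijection is equivariant for the natural right action of $\gl{2}{A}$ on $Y_0(A)$ described above and the action of $\gl{2}{A}$ on $\projl{F}$ through $\gl{2}{F}$ by fractional linear transformations. There is essentially no obstacle here, since all the content sits in Lemma \ref{lem:bezout}; the only point deserving a moment's care is to remember to include $\infty$ and the degenerate pair $(0,1)$ among the B\'ezout pairs, so that surjectivity is onto the whole of $\projl{F}$ and not merely onto $F$ or $F^\times$.
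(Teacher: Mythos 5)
Your proposal is correct and follows exactly the route the paper intends: the corollary is an immediate consequence of Lemma \ref{lem:bezout} once one notes that in a B\'ezout domain every nonzero pair is a B\'ezout pair, so every point of $\projl{F}$ (including $\infty$ and $0$) is a B\'ezout point and the injection of the lemma is onto. The paper offers no separate proof, treating this as immediate, so your write-up matches it in substance.
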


In general, when $A$ is an integral domain we will  identify  $Y_0(A)$ with the set of B\'ezout points in $\projl{F}$ .
\begin{rem}
Recall that $\gl{2}{A}$ acts transitively (on the right) on $Y_0(A)$ and this action is compatible with the natural right action on $\projl{F}$: If $M=\left[
\begin{array}{cc}
a&b\\
c&d\\
\end{array}
\right]\in \gl{2}{A}$, then $qM=\frac{aq+c}{bq+d}$ for all $q\in \projl{F}$.  Thus $Y_0(A)$, the set of B\'ezout points,  is precisely the orbit $\infty\cdot\gl{2}{A}\subset \projl{F}$. 
\end{rem}

\begin{exa} \label{exa:field}
If $F$ is a field then $\Gamma(F)$ is the complete graph on  $Y_0(F)=\projl{F}$. So  $Y_n(F)$ consists of all $(n+1)$-tuples $(x_0,\ldots,x_n)$ of distinct points of $\projl{F}$. 
If $F=\F{q}$ is the finite field with $q$ elements then $Y(F)$ is  just a $q$-simplex and hence the space $|Y(F)|$ is contractible. For infinite fields $F$, $Y(F)$ is an `infinite' simplex and it is also the case that $|Y(F)|$ is contractible. 
\end{exa}

\subsection{The edges  of $\Gamma(A)$}
\begin{lem}\label{lem:edge}
Let $A$ be a ring. Then $\spl{2}{A}$ and $\gl{2}{A}$ act transitively on the set $X_1(A)$ of ordered edges of $\Gamma(A)$ and the stabilizer of the ordered 
edge $( \infty, 0)$ is the group $T$ of diagonal matrices in
$\spl{2}{A}$ and $\gl{2}{A}$ respectively.
\end{lem}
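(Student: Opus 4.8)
The plan is to exhibit, for any ordered edge $(\bar{u},\bar{v})$ of $\Gamma(A)$, an explicit element of $\spl{2}{A}$ carrying $(\infty,0)$ to it, and then to compute the stabilizer directly. For transitivity, recall from the discussion of the vertices that for any unimodular row $u$ there is a unimodular row $w$ with $M := \left[\begin{array}{c} u\\ w\\ \end{array}\right]\in\gl{2}{A}$; moreover, after rescaling the bottom row by the unit $\det(M)^{-1}$ we may assume $M\in\spl{2}{A}$. The issue is that the second row $w$ of such a matrix need not be (a scalar multiple of) the prescribed $v$. So instead I would argue in two stages: first move $(\infty,0)$ to $(\bar u, 0')$ for some vertex $0'$, then correct within the stabilizer of $\bar u$.

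Concretely, I would first observe that the stabilizer of the ordered edge $(\infty,0)$ in $\spl{2}{A}$ consists of those $N=\left[\begin{array}{cc} a&b\\ c&d\\ \end{array}\right]\in\spl{2}{A}$ with $\infty\cdot N=\infty$ and $0\cdot N = 0$. Since $\infty = (1,0)$ and $0 = (0,1)$ act as the first and second rows read off appropriately — more precisely, using the formula $qM = \frac{aq+c}{bq+d}$ from the earlier remark, or directly that $(1,0)N = (a,b)$ and $(0,1)N = (c,d)$ — the condition $\infty\cdot N = \infty$ forces $b\in A^\times$ times nothing, i.e. $(a,b)\sim(1,0)$ so $b=0$ (after the unit normalization $a=1$ is not forced, only $b=0$), and $0\cdot N = 0$ forces $(c,d)\sim(0,1)$ so $c=0$. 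Hence $N$ is diagonal, and conversely every diagonal matrix in $\spl{2}{A}$ (resp. $\gl{2}{A}$) fixes both $\infty$ and $0$. This identifies the stabilizer with $T$. The same computation works verbatim for $\gl{2}{A}$ since the vertex and edge sets and the action formulas are identical.

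For transitivity itself, given an ordered edge $(\bar u,\bar v)$, by definition $M = \left[\begin{array}{c} u\\ v\\ \end{array}\right]\in\gl{2}{A}$; replacing $v$ by $\det(M)^{-1}v$ (which does not change the vertex $\bar v$) we get $M\in\spl{2}{A}$, and then $(\infty,0)\cdot M = (\infty\cdot M, 0\cdot M) = (\bar u,\bar v)$ directly from $(1,0)M = u$ and $(0,1)M = v$. So a single matrix does the job after all — no two-stage argument is needed, and this also handles the $\gl{2}{A}$ case (there one may take $M$ itself without normalizing). Transitivity on ordered edges is thus immediate.

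I expect the only genuine subtlety — the part worth writing carefully — is the stabilizer computation: one must be careful that $\infty\cdot N = \infty$ in $Y_0(A)$ means equality of classes under the unit action, i.e. $(a,b) = \lambda(1,0)$ for some $\lambda\in A^\times$, which gives $b = 0$ and $a\in A^\times$; combined with $\det N\in A^\times$ this is automatic, and likewise $0\cdot N = 0$ gives $c = 0$. One should note that having both $b=0$ and $c=0$ with $\det N = ad\in A^\times$ forces $a,d\in A^\times$, so $N\in T$; and the converse inclusion $T\subseteq\mathrm{Stab}$ is trivial. Everything else is routine bookkeeping with the explicit action, and the $\spl{2}{}$ versus $\gl{2}{}$ statements are proved by the same argument.
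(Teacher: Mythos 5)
Your proposal is correct and follows essentially the same route as the paper: the matrix with rows $u$ and $v$, rescaled by $\det^{-1}$ to land in $\spl{2}{A}$, carries $(\infty,0)$ to $([u],[v])$, and the stabilizer computation is the direct one (which the paper simply declares ``clear''). The two-stage detour in your first paragraph is unnecessary, as you yourself note, but the final argument is sound.
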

\begin{proof}
Let $( [a], [b])$ be an ordered edge and let 
$
X:=\left[
\begin{array}{c}
a\\
b\\
\end{array}
\right]
\in \gl{2}{A}.
$  Replacing $a$ by $\mathrm{det}(X)^{-1}a$ if necessary, we can suppose $X\in \spl{2}{A}$. Clearly $( \infty\cdot X, 0\cdot X)= ( [a],[b])$. The statement about the stabilizer is clear.
\end{proof}

Let us say that two vertices $[u]$ and $[v]$ of $\Gamma(A)$ are \emph{neighbours} if $\{ [u],[v]\}$ is an edge. Then $[b_1,b_2]$ is a neighbour of $\infty=[1,0]$ if and only if 
$b_2\in A^\times$ and thus if and only if $[b_1,b_2]= [(b_1b_2^{-1},1)]$ lies in $A_+$. Likewise, the set of neighbours of $0$ is $A_-$. Hence the set of  common neighbours of $\infty$ and $0$ 
is the set $A_+\cap A_-=\{ [u,1]\ | u\in A^\times\}=\{ [1,u]\ | u\in A^\times\}$.

\subsection{$3$-cliques and $4$-cliques in $\Gamma(A)$}\label{sec:clique}
\begin{lem} \label{lem:3clique}
Let $A$ be a ring. $\gl{2}{A}$ acts transitively on the set of ordered $3$-cliques $X_3(A)$. The stabilizer of the ordered $3$-clique $(\infty,0,1)$ is the group of scalar matrices in
$\gl{2}{A}$.
\end{lem}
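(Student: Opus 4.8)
The plan is to reduce everything to the transitivity statement, already available, for ordered \emph{edges} (Lemma \ref{lem:edge}), and then analyse the extra freedom. First I would observe that, given any ordered $3$-clique $(x_0,x_1,x_2)$, the pair $(x_0,x_1)$ is an ordered edge, so by Lemma \ref{lem:edge} there is $X \in \gl{2}{A}$ with $(\infty,0)\cdot X = (x_0,x_1)$; replacing $(x_0,x_1,x_2)$ by its translate under $X^{-1}$, we are reduced to showing that any ordered $3$-clique of the form $(\infty,0,x)$ is in the $\gl{2}{A}$-orbit of $(\infty,0,1)$, and moreover that the stabilizer of $(\infty,0,1)$ is exactly the scalar matrices. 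For the first point, I would use the description of common neighbours computed just above: a vertex $x$ adjacent to both $\infty$ and $0$ must lie in $A_+ \cap A_- = \{[u,1] : u \in A^\times\}$, so $x = [u,1]$ for some unit $u$. The diagonal matrix $\diag{1}{u^{-1}}{}$ — more precisely $\left[\begin{smallmatrix} 1 & 0 \\ 0 & u^{-1}\end{smallmatrix}\right]$, which lies in $\gl{2}{A}$ and fixes the ordered edge $(\infty,0)$ (it is in the group $T$ of Lemma \ref{lem:edge}) — sends $[1,1] = 1$ to $[1,u^{-1}] = [u,1] = x$. Hence $(\infty,0,1)\cdot \left[\begin{smallmatrix} 1 & 0 \\ 0 & u^{-1}\end{smallmatrix}\right] = (\infty,0,x)$, proving transitivity.

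For the stabilizer, I would note that any matrix fixing the ordered $3$-clique $(\infty,0,1)$ in particular fixes the ordered edge $(\infty,0)$, hence by Lemma \ref{lem:edge} is a diagonal matrix $D = \left[\begin{smallmatrix} a & 0 \\ 0 & d\end{smallmatrix}\right]$ with $ad \in A^\times$. Such $D$ sends $1 = [1,1]$ to $[a,d]$, and the condition $[a,d] = [1,1]$ in $Y_0(A)$ means $a = d$ (there is a unit $w$ with $(a,d) = (w\cdot 1, w\cdot 1)$, forcing $a = d = w$). Thus $D = a\cdot \id{2}$ is scalar, and conversely every scalar matrix in $\gl{2}{A}$ acts trivially on $Y_0(A)$ and hence fixes every clique. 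This identifies the stabilizer with the group of scalar matrices.

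I do not anticipate a serious obstacle here: the lemma is essentially bookkeeping built on the already-established transitivity on ordered edges together with the explicit computation of the common neighbours of $\infty$ and $0$. The only mild subtlety — and the place I would be most careful — is the passage from equality of rows up to a unit to equality of the unit scalars themselves (i.e.\ that $[a,d]=[1,1]$ genuinely forces $a=d$ as \emph{elements}, not merely up to a unit), which is where one must remember that vertices are rows \emph{modulo} units, so a priori $D$ could be any $\diag{a}{d}{}$ with $a = d$ rather than $a,d$ both forced to be $1$; that is exactly why the stabilizer is all of the scalars and not just the identity.
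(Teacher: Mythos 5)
Your proposal is correct and follows essentially the same route as the paper: reduce to the base edge $(\infty,0)$ via Lemma \ref{lem:edge}, identify the third vertex as $[u,1]$ with $u$ a unit using the common-neighbour computation, and adjust by a diagonal matrix (the paper multiplies by $\mathrm{diag}(1,u)$ to send $[u,1]$ to $1$, whereas you send $1$ to $[u,1]$ — an immaterial difference of direction). You are in fact slightly more thorough than the paper, which leaves the stabilizer computation implicit; your careful check that $[a,d]=[1,1]$ forces $a=d$ as elements is exactly the right point to be careful about.
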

\begin{proof}
Let $([a],[b],[c])$ be any ordered $3$-clique. By the proof of Lemma \ref{lem:edge} we can choose $X\in \gl{2}{A}$ such that $[a]\cdot X=\infty, [b]\cdot X = 0$. It follows that 
$[c]\cdot X$ is a neighbour of both $\infty$ and $0$ and hence $[c]\cdot X = [u,1]$ for some $u\in A^\times$. Now let $X':= X\mathrm{diag}(1,u)$. Then 
$[a]\cdot X'=\infty, [b]\cdot X'=0$ and $[c]\cdot X'=[u,1]\cdot \mathrm{diag}(1,u)=[u,u]=1$ as required. 
\end{proof}

\begin{cor}\label{cor:3clique}
Let $a$, $b$ be unimodular rows and suppose that $\{ [a],[b]\}$ is an edge of $\Gamma(A)$. Then there is a bijection from $A^\times$ to the set of vertices $[c]$ such that 
$\{ [a], [b],[c]\}$ is a $3$-clique, given by $u\mapsto [ a+ub]$.
\end{cor}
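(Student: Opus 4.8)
The plan is to deduce this directly from the transitivity result in Lemma \ref{lem:3clique}, reducing to the special edge $(\infty,0)$ and then reading off the common neighbours. First I would invoke Lemma \ref{lem:edge} to pick $X\in\spl{2}{A}$ (or just $\gl{2}{A}$) with $[a]\cdot X=\infty$ and $[b]\cdot X=0$; conjugating the whole situation by $X$, a vertex $[c]$ completes $\{[a],[b]\}$ to a $3$-clique if and only if $[c]\cdot X$ is a common neighbour of $\infty$ and $0$. By the computation already carried out after Lemma \ref{lem:edge}, the set of common neighbours of $\infty$ and $0$ is exactly $\{[u,1]\ :\ u\in A^\times\}$, so the vertices completing the clique are precisely $[c]=[u,1]\cdot X^{-1}$ for $u\in A^\times$.

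Next I would make this parametrization explicit and independent of the auxiliary choice of $X$. Writing $X^{-1}$ in terms of its rows, note that $\infty\cdot X^{-1}=[a]$ means the first row of $X^{-1}$ represents $[a]$, and $0\cdot X^{-1}=[b]$ means the second row represents $[b]$; after rescaling rows by units (which does not change the vertices) we may take $X^{-1}=\left[\begin{array}{c}a\\ b\\ \end{array}\right]$ up to the unit ambiguity in each row. Then $[u,1]\cdot X^{-1}=[u\cdot a + 1\cdot b]=[ua+b]=[a+u^{-1}b]$ (absorbing the unit), which ranges over exactly the claimed set $\{[a+ub]\ :\ u\in A^\times\}$ as $u$ runs over $A^\times$. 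This shows the map $u\mapsto[a+ub]$ is surjective onto the set of completing vertices.

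For injectivity, suppose $[a+ub]=[a+u'b]$ in $Y_0(A)$, i.e. $(a+ub)=\lambda(a+u'b)$ for some $\lambda\in A^\times$. Applying $X$ (equivalently, using that the rows of $\left[\begin{array}{c}a\\ b\\ \end{array}\right]$ are the preimages of $\infty$ and $0$), this becomes $[u,1]=\lambda[u',1]$, forcing $\lambda=1$ and then $u=u'$; alternatively one argues directly that since $\{[a],[b]\}$ is an edge, $a$ and $b$ are rows of an invertible matrix, so the coordinates of $a+ub$ in the basis given by these rows are $(u,1)$, which determines $u$. Either way the map is a bijection $A^\times\to\{[c]\ :\ \{[a],[b],[c]\}\text{ is a }3\text{-clique}\}$.

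The only mild subtlety — and the one place to be careful rather than a genuine obstacle — is bookkeeping the unit ambiguity: the rows of $X^{-1}$ only represent $[a]$ and $[b]$ up to units, and $[u,1]$ absorbs scalars, so one must check that the composite map $u\mapsto[a+ub]$ is genuinely well-defined and that the unit rescalings can be arranged to disappear (replacing $u$ by an associate). Once this is tracked cleanly, both surjectivity and injectivity are immediate from Lemma \ref{lem:3clique} and the earlier description of the common neighbours of $\infty$ and $0$.
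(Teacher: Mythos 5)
Your proposal is correct and is essentially the paper's own argument: the paper simply takes $X=\left[\begin{array}{c}a\\ b\\ \end{array}\right]\in\gl{2}{A}$ and transports the known description of $3$-cliques containing $\{\infty,0\}$ (namely $\{\infty,0,[u,1]\}$, $u\in A^\times$) forward along $X$, which is the inverse of the reduction you perform. Your extra care about unit ambiguity and the explicit injectivity check (where the coordinates of $a+ub$ in the basis $\{a,b\}$ are $(1,u)$, not $(u,1)$ as you wrote) are fine but not needed once one fixes $X$ to be exactly the matrix with rows $a$ and $b$.
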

\begin{proof} Let $X=\left[
\begin{array}{c}
a\\
b\\
\end{array}
\right]\in \gl{2}{A}$.
As remarked above, the $3$-cliques containing $\{ \infty,0\}$ are precisely the triples $\{\infty,0,[u,1]\}$ for some $u\in A^\times$. 
For any $u\in A^\times$,  $X$ sends the $3$-clique $(\infty, 0, [1,u])$ to $([a], [b],[a+ub])$ and hence this accounts for all $3$-cliques containing $[a]$ and $[b]$.
\end{proof}

\begin{lem}\label{lem:4clique} Let $A$ be a ring. The graph $\Gamma(A)$ contains $4$-cliques if and only if the set $\wn{A}:=\{ u\in A^\times\ |\ 1-u\in A^\times\}$ is non-empty.
\end{lem}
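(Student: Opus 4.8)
The plan is to reduce the existence of a $4$-clique to a normal-form computation, using the transitivity results already established. By Lemma \ref{lem:3clique}, $\gl{2}{A}$ acts transitively on ordered $3$-cliques with the $3$-clique $(\infty,0,1)$ as a representative, so $\Gamma(A)$ contains a $4$-clique if and only if there exists a vertex $[c]$ which is a common neighbour of $\infty$, $0$, and $1$. First I would record, using the computations already made in this section, that the common neighbours of $\infty$ and $0$ are exactly the vertices $[1,u]$ with $u\in A^\times$; so the question is precisely: for which $u\in A^\times$ is $\{[1,u],1\}=\{[1,u],[1,1]\}$ an edge of $\Gamma(A)$?

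Next I would compute the determinant condition for that pair to be an edge. The matrix
\[
\left[\begin{array}{cc} 1 & u\\ 1 & 1\\ \end{array}\right]
\]
has determinant $1-u$, so $\{[1,u],[1,1]\}$ is an edge if and only if $1-u\in A^\times$. Hence a common neighbour $[c]=[1,u]$ of $\infty$, $0$, $1$ exists if and only if there is a $u\in A^\times$ with $1-u\in A^\times$, i.e.\ if and only if $\wn{A}\ne\emptyset$. Combining this with the reduction from the previous paragraph gives the claim: $\Gamma(A)$ contains a $4$-clique if and only if $\wn{A}\ne\emptyset$.

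One small point to be careful about is the requirement in the definition of a clique that the vertices be \emph{distinct}. When $u\in\wn{A}$, I should check that $[1,u]$, $\infty$, $0$, $1$ are genuinely four distinct vertices: $[1,u]$ differs from $\infty=[1,0]$ and from $0=[0,1]$ since $u\ne 0$ and $u\ne\infty$ (it is a unit), and $[1,u]\ne 1=[1,1]$ since $1-u\in A^\times$ forces $u\ne 1$; and $\infty,0,1$ are distinct by the discussion of vertices above (here one uses that $A$ is nonzero, which is implicit since $A^\times\supseteq\wn{A}\ne\emptyset$). Conversely, if $\Gamma(A)$ contains any $4$-clique, transitivity on $3$-cliques lets us move three of its vertices to $(\infty,0,1)$, and the fourth then provides the required $u$. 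I expect no serious obstacle here; the only thing to watch is bookkeeping of the distinctness conditions and making sure the transitivity in Lemma \ref{lem:3clique} is applied to an \emph{ordered} $3$-clique extracted from the given $4$-clique.
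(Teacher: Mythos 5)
Your proof is correct and follows essentially the same strategy as the paper's: normalize by transitivity and read the unit condition off a determinant. The only cosmetic difference is that you normalize a full $3$-clique to $(\infty,0,1)$ via Lemma \ref{lem:3clique}, whereas the paper normalizes only an edge to $(\infty,0)$ and then extracts the condition from the remaining two vertices $[r,1]$, $[s,1]$ by requiring $r-s\in A^\times$ and setting $u=r^{-1}s$.
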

\begin{proof}
Let $\{ x,y,z,w\}$ be a $4$-clique in $\Gamma(A)$. There exists $X\in \spl{2}{A}$ such that $x=\infty\cdot X$ and $y=0\cdot X$. Multipying by $X^{-1}$ if necessary, we can 
assume $x=\infty$, $y=0$.  Since $z$ is a neighbour of $\infty$, $z\in A_+$. Since $z$ is a neighbour of $0$, $z\in A_-$. Thus $z\in A_+\cap A_-$ so $z=[r,1]$ for some unit $r$.
Similarly $w=[s,1]$ for some  unit $s$. Since $z$ is a neighbour of $w$, $r-s=t$ is a unit. Thus, letting $u=r^{-1}s\in a^\times$, we have $1-u=r^{-1}t\in A^\times$.

Conversely, if $u,1-u$ are units in $A$, then $\infty,0,[u,1],[1-u,1]\}$ is a $4$-clique in $\Gamma(A)$. 
\end{proof}

\begin{exa}
 Since $\wn{\Z}=\emptyset$, $\Gamma(\Z)$ has no $4$-cliques and hence $Y(\Z)$ has no $3$-simplices. Thus $Y(\Z)=Y_2(\Z)$  is $2$-dimensional simplicial complex. 
\end{exa}

\subsection{Example: The graph $\Gamma(\Z)$ and  space $Y(\Z)$}\label{sec:gammaz}
Since $\Z$ is a PID, the vertices of $\Gamma(\Z)$ are naturally the points of $\projl{\Q}$. The neighbours of the vertex $\infty$ are precisely the integers $\Z\subset\projl{\Q}$ and each $n\in \Z$ is a neighbour of $n+1$. 
Recall that fractions $p/q$ and $r/s$ (written in reduced terms ) are neighbours in $\Gamma(\Z)$ if and only if  $\left| \frac{p}{q}-\frac{r}{s}\right|=\frac{1}{qs}$.
 Thus thus the structure of $\Gamma(\Z)$ 
and $Y(\Z)$ are closely related to the \emph{Farey sequences} $F_n$:

Recall that the \emph{$n$th Farey sequence} $F_n$ is the sequence of reduced fractions between (and  including) $0$ and $1$  with denominator at most $n$, written in ascending order of size. If $a/b,c/d$ are two successive terms in $F_n$ (called \emph{Farey neighbours}), then $\left| \frac{a}{b}-\frac{c}{d}\right|=\frac{1}{bd}$. Conversely, if the reduced fractions $a/b$ and $c/d$ lie between $0$ and $1$ and satisfy  $\left| \frac{a}{b}-\frac{c}{d}\right|=\frac{1}{bd}$, then $a/b$ and $c/d$ are neighbours in $F_n$ where $n=\mathrm{max}(b,d)$.
Observe also that $F_n\setminus F_{n-1}=\{ \frac{a}{n}\ |\ (a,n)=1\}$. Thus this set has $\phi(n)$ elements. 

We now embed the graph $\Gamma(\Z)$ and the  space $|Y(\Z)|$  in $\R^2$ as follows: Identify $\infty$ with the origin $(0,0)\in \R^2$. Now for any $x\in \Q$, write $x=\frac{p}{q}$ 
with $(p,q)=1$ and $q>0$ and identify $p/q$ with the point $(p/q,q)\in \R^2$.  We join any two points which are neighbours in $\Gamma(\Z)$ to get an embedding 
$\Gamma(\Z)\subset \R^2$. Our arguments below will show that none of the resulting edges intersect except at vertices. Recall also that each edge 
$(x,y)=(p/q,r/s)$   in $\Gamma(\Z)$ lies in precisely two $2$-simplices of $Y(\Z)$: those with third vertex $x\oplus y:=\frac{p+r}{q+s}$ or $x\ominus y:=\frac{p-r}{q-s}$ (see Corollary \ref{cor:3clique}).

We construct the subspace $\tilde{Y}(Z)\subset\R^2$ as follows:
We start with $\Gamma(\Z)$ and whenever $\{ x,y,z\}\subset \R^2$ is a $2$-simplex we add all the points in the closed triangle $[x,y,z]$ to $\tilde{Y}(Z)$. We will show that $\tilde{Y}(\Z)$ is homeomorphic to $|Y(\Z)|$.

For all $n\geq 1$, let $\Gamma(\Z,n)\subset \R^2$ be the induced subgraph of $\Gamma(\Z)$ consisting of vertices with denominator at most $n$ (including $\infty$, which has denominator $0$). Let 
$\tilde{Y}(\Z,n)$ be the corresponding subspace of $\tilde{Y}(\Z)$. Thus $\Gamma(\Z)=\cup_{n=1}^\infty \Gamma(\Z,n)$ and $\tilde{Y}(Z)=\cup_{n=1}^\infty \tilde{Y}(\Z,n)$.

Thus the vertices of $\Gamma(\Z,1)$  are $\infty$ and the integers. It decomposes into $2$-simplices $[\infty,n,n+1]$, $n\in \Z$,  and thus $\tilde{Y}(\Z,1)$ is the set
$\left(\R\times (0,1]\right)\cup \{ (0,0)\}$. 

Observe that for any $n\in \Z$, translation by $n$ ($=$ right multiplication by 
$\left[
\begin{array}{cc}
1&0\\
n&1\\
\end{array}
\right]
\in \spl{2}{A}$) is an automorphism of $\Gamma(\Z)\subset \projl{\Q}$. When we embed $\Gamma(\Z)$ in $\R^2$, it follows that $\Gamma(\Z)\cap H_1$ is invariant 
under horizontal translation by any $n\in \Z$, where $H_1$ is the closed half-plane $\{ (x,y)\in \R^2\ |\ y\geq 1\}$. Similarly, $\Gamma(\Z,m)\cap H_1$ and 
$\tilde{Y}(\Z,n)\cap H_1$  are invariant under horizontal translation by integers. In turn, it follows that the spaces $\tilde{Y}(\Z,n+1,n):= \tilde{Y}(\Z,n+1)\setminus \tilde{Y}(\Z,n)$,
$n\geq 1$ are all invariant under horizontal translations by integers. Now $\tilde{Y}(\Z)=\tilde{Y}(\Z,1)\cup \left(\bigcup_{n=1}^{\infty}\tilde{Y}(\Z,n+1,n)\right)$.

The vertices of $\Gamma(\Z,n)$ are $\infty$, the Farey sequence $F_n\subset \R^2$ and all horizontal translates of the points of $F_n$ by integer distances. 

We observe that if $x,y$ are rational with $x<0,y>0$ then $(x,y)$ cannot be an edge of $\Gamma(\Z)$: Let $x=-a/b,y=c,d$ with $0<a,b,c,d\in \Z$. Then the absolute value of the 
corresponding determinant is $|-ad-bc|=|ad+bc|>1$. By the integer translation invariance, it follows that if $x<n, y>n$ for any $n\in \Z$ then $(x,y)$ is not an edge of $\Gamma(\Z)$.
Thus each edge of $\Gamma(\Z)\cap H_1$ is fully contained in one of the vertical strips $[n,n+1]\times \R_+$.

The points and additional edges of of $\Gamma(\Z,n+1,n)$ arise as follows: Because of the translation-invariance property it is enough to determine the additional points and edges
in the vertical strip $[0,1]\times \R_+$.  The existing vertices are the points of $F_n$.  Whenever a pair of neighbours $x=p/q<r/s=y$ in this sequence has the property that $q+s=n+1$,
then $x\oplus y$ is a point of $F_{n+1}\setminus F_n\subset \Gamma(\Z,n+1,n)$ with $y$-coordinate $n+1$. We have $x<x\oplus y <y$, and we get two additional edges $(x,x\oplus y)$ and $(x\oplus y,y)$
 and an additional $2$-simplex $\{ x,x\oplus y,y\}$.
Recall that the edge $(x,y)$  bounds two $2$-simplices. The other has vertex $x\ominus y$ which lies strictly below  $x$ and $y$ since it has denominator $|q-s|<q,s$. 
In this way, the new edges  which are added in $\tilde{Y}(\Z,n+1,n)$ do not intersect the interior of any  any existing edges and the new simplices do not intersect the interior of any  any existing simplices.  

Thus we see by induction that each of the spaces  $|Y(\Z,n)|$ embeds $\R^2$ with image $\tilde{Y}(\Z,n)$. Hence $|Y(\Z)|$ embeds in $\R^2$ with image $\tilde{Y}(\Z)=\cup \tilde{Y}(\Z,n)$.

Note further that since $x<x\oplus y <y$, given any point $z$ in the closed simplex $\{ x,x\oplus y,y\}$ the vertical line segment joining $z$ to the edge $(x,y)$ is fully contained in this 
$2$-simplex. It follows that $\tilde{Y}(\Z,n+1)$ homotopy retracts to $\tilde{Y}(\Z,n)$ by moving along vertical lines, and hence  that the space $\tilde{Y}(\Z)$ homotopy retracts to the contractible space $\tilde{Y}(\Z,1)$  by restricting to $\tilde{Y}(\Z)$ the obvious contraction of $\R^2$ to $\{ (x,y)\ | y\leq 1\}$ along vertical lines. 
It follows that $\tilde{Y}(\Z)$, and hence $|Y(\Z)|$,  is a contractible space.

Finally, we describe explicitly the space $\tilde{Y}(\Z)\subset \R^2$.: 

Let $(x,t)\in \R^2$ with $t>0$.
\begin{enumerate}
\item If $x=\frac{p}{q}$ is  rational, then $(x,t)\in \tilde{Y}(\Z)$ if and only if $t\leq q$. Let $L_{p/q}$ be the open vertical half-line $ \{ (\frac{p}{q},t)\ | \ t>q\}$. Then 
$L_{p/q}\cap \tilde{Y}(\Z)=\emptyset$. 

\item Suppose now that $x$ is irrational.  Choose an integer $m>t$ . Choose also an integer $k>m$ satisfying $|x-w|<1/k$ for all $w\in F_m$. Now let $y=p/q$ be the largest 
element of $F_k$ which is smaller than $x$ and let $z=r/s$ be the successor of $y$ in $F_k$. So we have $y<x<z$. Furthermore $|y-x|<1/k$ and thus $y\not\in F_m$: i.e., $q>m>t$.
Similarly, $s>m>t$. 
 Thus  $(x,t)$ lies vertically below the edge $((y,q),(z,s))\in \tilde{Y}(\Z,n)$ and hence $(x,t)$ itself lies in 
$ \tilde{Y}(\Z,n)\subset \tilde{Y}(\Z)$.
\end{enumerate}
Putting these two observations together, we conclude that 
\[
\tilde{Y}(\Z)=\{ (0,0)\}\cup \left( \R\times (0,\infty)\right)\setminus\left(\cup_{p/q} L_{p/q}\right).
\]

\section{Path components of $\Gamma(A)$ and the condition ($\mathrm{GE}_2$)}\label{sec:path}
Let $A$ be a ring. Given $a\in A$, we consider the following matrices in $\spl{2}{A}$:
\[
E_{2,1}(a):=\left[
\begin{array}{ll}
1&0\\
a&1\\
\end{array}
\right],\  E_{1,2}(a):=
\left[
\begin{array}{ll}
1&a\\
0&1
\end{array}
\right],\ 
E(a):=
\left[
\begin{array}{ll}
a&1\\
-1&0
\end{array}
\right]=W\cdot E_{2,1}(a)= E_{1,2}(-a)\cdot W
\]
where 
\[
W:=E(0)=
\left[
\begin{array}{ll}
0&1\\
-1&0
\end{array}
\right]=
E_{1,2}(1)E_{2,1}(-1).
\]

Let $E_2(A)$ denote the subgroup of $\spl{2}{A}$ generated by the set of elementary matrices $E_{i,j}(a)$, $a\in A$ for $i\not= j\in \{ 1,2\}$. Let $\mathrm{GE}_2(A)$ 
denote the subgroup of $\gl{2}{A}$ generated by  $E_2(A)$ and the group $T=D_2(A)$ of invertible  diagonal matrices. Thus all the above matrices belong to $\mathrm{E}_2(A)$.
Furthermore, for any $a\in A$  we let
\[
S(a):=\left[
\begin{array}{ll}
a&1\\
1&0
\end{array}
\right]=
\left[
\begin{array}{ll}
1&0\\
0&-1
\end{array}
\right]\cdot E(a)\in \gl{2}{A}.
\]

\begin{lem}\label{lem:path} Let $A$ be a ring and let $x_0,x_1,\ldots,x_n$ be a path in $\Gamma(A)$. Fix $X\in \gl{2}{A}$ with $x_0=\infty\cdot X$. Then there exist unique
$a_1,\ldots,a_n\in A$ satisfying
 \[
x_i=\infty E(a_i)E(a_{i-1})\cdots E(a_1)X\mbox{ for }i=1,\ldots, n.
\]
Conversely, given $X\in\gl{2}{A}$ and $a_1,\ldots, a_n\in A$ , if we set $x_0:=\infty \cdot X$ and $x_i:=\infty\cdot E(a_i)\cdots E(a_1)X$ for $1\leq i\leq n$, then
$x_0,x_1,\cdots,x_n$ is a path in $\Gamma(A)$. 
\end{lem}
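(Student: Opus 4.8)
The plan is to analyze what it means for consecutive vertices along a path to be neighbours, translate this into a statement about $\gl{2}{A}$, and then induct on the length of the path. Recall from Lemma~\ref{lem:edge} that $\gl{2}{A}$ acts transitively on ordered edges, with the stabilizer of $(\infty,0)$ being the diagonal torus $T$, and recall from the remarks after Lemma~\ref{lem:edge} that the neighbours of $\infty$ are exactly the vertices in $A_+$, i.e.\ those of the form $[a,1]$ for $a\in A$.

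First I would establish the key local step: if $[v]$ is any neighbour of $\infty$, then there is a \emph{unique} $a\in A$ with $[v]=\infty\cdot E(a)$. Indeed $\infty\cdot E(a)=(1,0)\cdot\matr{a}{1}{-1}{0}=[a,1]=a_+$, and since $[v]$ is a neighbour of $\infty$ it equals $a_+$ for a unique $a\in A$ (uniqueness because $(a,1)$ and $(a',1)$ are unit multiples of one another only if $a=a'$). More generally, for any $Z\in\gl{2}{A}$ and any neighbour $[v]$ of $\infty\cdot Z$: applying $Z^{-1}$, $[v]\cdot Z^{-1}$ is a neighbour of $\infty$, so $[v]\cdot Z^{-1}=\infty\cdot E(a)$ for a unique $a$, i.e.\ $[v]=\infty\cdot E(a)Z$.

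Now I would run the induction on $n$. The case $n=0$ is vacuous (or: $x_0=\infty\cdot X$ by hypothesis). Assume we have produced unique $a_1,\dots,a_{n-1}$ with $x_i=\infty\cdot E(a_i)\cdots E(a_1)X$ for $i\le n-1$; set $Z:=E(a_{n-1})\cdots E(a_1)X\in\gl{2}{A}$, so $x_{n-1}=\infty\cdot Z$. Since $x_{n-1},x_n$ is an edge of the path, $x_n$ is a neighbour of $x_{n-1}=\infty\cdot Z$, so by the local step there is a unique $a_n\in A$ with $x_n=\infty\cdot E(a_n)Z=\infty\cdot E(a_n)\cdots E(a_1)X$. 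This proves existence and uniqueness of the $a_i$. For the converse, given $X$ and $a_1,\dots,a_n$, set $x_i:=\infty\cdot E(a_i)\cdots E(a_1)X$; then for each $i$, writing $Z_i:=E(a_{i-1})\cdots E(a_1)X$, we have $x_{i-1}=\infty\cdot Z_i$ and $x_i=\infty\cdot E(a_i)Z_i$, and since $\infty$ and $\infty\cdot E(a_i)=a_+$ are neighbours (the defining matrix $E(a_i)$ lies in $\gl{2}{A}$), applying $Z_i$ shows $x_{i-1}$ and $x_i$ are neighbours; hence $x_0,\dots,x_n$ is a path. One should also note $x_i\neq x_{i-1}$ is automatic from the path hypothesis in the first direction, and in the converse a "path" in a graph is customarily allowed to repeat vertices, or else one observes $a_+\neq\infty$ always so consecutive vertices differ — I would state the convention explicitly.

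The only genuine subtlety — and the step I expect to require the most care — is the uniqueness of $a$ in the local step, i.e.\ pinning down that distinct $a\in A$ give distinct vertices $[a,1]$ and that every neighbour of $\infty\cdot Z$ arises this way with no redundancy; everything else is a routine transitivity-and-induction argument built on Lemma~\ref{lem:edge} and the explicit description of the neighbours of $\infty$. I would be slightly careful that the factorization $E(a)=W\cdot E_{2,1}(a)=E_{1,2}(-a)\cdot W$ is not actually needed here, though it confirms $E(a)\in E_2(A)\subset\gl{2}{A}$, which is what makes $(\infty,\infty\cdot E(a))$ a genuine edge.
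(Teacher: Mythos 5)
Your proof is correct and follows essentially the same route as the paper's: reduce to the observation that the neighbours of $\infty$ are exactly the classes $[a,1]=\infty\cdot E(a)$ for uniquely determined $a\in A$, translate by $X^{-1}$ (or more generally by the accumulated product), and induct; the converse is likewise the paper's argument that $\{\infty,\infty\cdot E(a)\}$ is an edge and edges are preserved by right multiplication. The extra care you take over uniqueness of $a$ and over the path convention is consistent with, and slightly more explicit than, what the paper records.
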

\begin{proof} For the first statement, we proceed by induction on $n$. When $n=1$, we have that $x_1$ is a neighbour of $x_0=\infty\cdot X$ and hence $x_1\cdot X^{-1}$ is a neighbour of
 $x_0\cdot X^{-1}=\infty$. 
It follows that $x_1\cdot X^{-1}=[a_1,1]=\infty\cdot E(a_1)$ for some uniquely determined $a_1\in A$. Thus $x_1=\infty\cdot E(a_1)X$ as required. For the inductive step, simply 
replace $X$ by $X_n:= E(a_n)\cdots E(a_1)X$.

The converse statement follows from the observation that if $Y\in \gl{2}{A}$ and $a\in A$, then $\{ \infty, \infty\cdot E(a)\}$ is an edge in $\Gamma(A)$, and hence so is
$\{ \infty\cdot Y,\infty\cdot E(a)Y\}$. 
\end{proof}

\begin{rem} Since for any $a\in A$ we have $[a,1]=\infty\cdot E(a)=\infty\cdot S(a)$, we could have stated Lemma \ref{lem:path} using the matrices $S(a)$ rather than the 
matrices $E(a)$. One then finds $x_i=\infty E(a_n)\cdots E(a_1)X= \infty S(b_n)\cdots S(b_1)X$ where $b_i=(-1)^{i-1}a_i$ for all $i$.
\end{rem}

\begin{thm}\label{thm:pi0}  Let $A$ be a ring. Then there is  a natural bijection of right $\gl{2}{A}$-sets $\pi_0(\Gamma(A))\leftrightarrow \mathrm{GE}_2(A)\backslash \gl{2}{A}$.
\end{thm}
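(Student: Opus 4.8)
The plan is to produce the bijection by mapping a path component of $\Gamma(A)$ to the coset of $\mathrm{GE}_2(A)$ determined by any matrix carrying $\infty$ into that component. First I would define a map $\Phi: \gl{2}{A} \to \pi_0(\Gamma(A))$ by $X \mapsto [\infty \cdot X]$, the path component of the vertex $\infty \cdot X$; since $\gl{2}{A}$ acts transitively on $Y_0(A)$ (as noted after the definition of the vertices), $\Phi$ is surjective. The key step is to show that $\Phi(X) = \Phi(X')$ if and only if $X$ and $X'$ lie in the same right coset $\mathrm{GE}_2(A) X = \mathrm{GE}_2(A) X'$, which will give the induced bijection $\mathrm{GE}_2(A)\backslash \gl{2}{A} \leftrightarrow \pi_0(\Gamma(A))$.

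For the "if" direction, suppose $X' = g X$ with $g \in \mathrm{GE}_2(A)$. Writing $g$ as a product of matrices $E(a_i)$ and diagonal matrices $\mathrm{diag}(u,v)$, and noting that a diagonal matrix fixes $\infty$, Lemma \ref{lem:path} (its converse statement) exhibits a path in $\Gamma(A)$ from $\infty \cdot X$ to $\infty \cdot g X = \infty \cdot X'$: indeed $\infty \cdot hX$ and $\infty \cdot E(a) h X$ are always neighbours, and inserting a diagonal matrix does not move the endpoint $\infty$. Hence $\Phi(X) = \Phi(X')$. For the "only if" direction, suppose $\infty \cdot X$ and $\infty \cdot X'$ are joined by a path $x_0 = \infty\cdot X, x_1, \ldots, x_n = \infty \cdot X'$. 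By Lemma \ref{lem:path} there exist $a_1, \ldots, a_n \in A$ with $x_n = \infty \cdot E(a_n)\cdots E(a_1) X$, so $\infty \cdot E(a_n)\cdots E(a_1) X = \infty \cdot X'$. This means $X' (E(a_n)\cdots E(a_1) X)^{-1}$ stabilizes $\infty$, hence lies in the stabilizer $\tilde{B}$ of lower triangular matrices in $\gl{2}{A}$. Writing such a lower triangular matrix as $\mathrm{diag}(u,v) \cdot E_{2,1}(c)$ with $u,v \in A^\times$, $c \in A$, we see it lies in $\mathrm{GE}_2(A)$; therefore $X' \in \mathrm{GE}_2(A) \cdot E(a_n)\cdots E(a_1) X = \mathrm{GE}_2(A) X$, as required.

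Finally I would check equivariance: the $\gl{2}{A}$-action on $\pi_0(\Gamma(A))$ (induced by right multiplication on vertices, which permutes path components) corresponds under $\Phi$ to right multiplication on cosets $\mathrm{GE}_2(A)\backslash \gl{2}{A}$, since $\Phi(XY) = [\infty \cdot XY] = \Phi(X)\cdot Y$; this is immediate from the definitions. The only mild subtlety, and the one place to be careful, is the bookkeeping that $\tilde{B} \subseteq \mathrm{GE}_2(A)$ and more generally that diagonal matrices may be freely inserted into a product of $E(a_i)$'s without changing the endpoint $\infty \cdot (\text{product})$ — this rests on $\infty \cdot \mathrm{diag}(u,v) = \infty$ and $E_{2,1}(c) = W^{-1} E(c)$ lying in $E_2(A)$. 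Everything else is a routine application of Lemma \ref{lem:path} together with the transitivity of the $\gl{2}{A}$-action on $Y_0(A)$ and the identification of the stabilizer of $\infty$ with $\tilde{B}$.
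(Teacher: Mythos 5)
Your proof is correct and follows essentially the same route as the paper: both directions rest on Lemma \ref{lem:path} together with the facts that the stabilizer $\tilde{B}$ of $\infty$ lies in $\mathrm{GE}_2(A)$ and that $\gl{2}{A}$ acts transitively on vertices. The only cosmetic difference is that the paper first normalizes elements of $\mathrm{GE}_2(A)$ to the form $DE$ (diagonal times element of $E_2(A)$) using the relation $E(a)D=D'E(v^{-1}au)$, whereas you leave the diagonal factors in place and note that left multiplication by a diagonal matrix does not change the projective point at each step; both observations do the same job.
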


\begin{proof} For $u,v\in A^\times$, let $D=\mathrm{diag}(u,v), D'=\mathrm{diag}(v,u)\in \gl{2}{A}$. Then for any $a\in A$, we have $E(a)D=D'E(v^{-1}au)$. It follows that any element of $\mathrm{GE}_2(A)$ can be written as a product $DE$ where $D$ is a diagonal matrix and $E\in E_2(A)$.

For $x\in Y_0(A)$, let $p(x)$ denote the path component of $x$ in $\Gamma(A)$. Suppose now that vertices $x$ and $y$ are in the same path component of $\Gamma(A)$. 
Choose $X,Y\in\gl{2}{A}$ such that $x=\infty\cdot X$ and $y=\infty\cdot Y$. By Lemma \ref{lem:path} there exist $a_1,\ldots,a_n\in A$ such that
\[
y=\infty\cdot  E(a_n)\cdots E(a_1)X=\infty\cdot Y.
\]
It follows that $E(a_n)\cdots E(a_1)XY^{-1}$ stabilizes $\infty$ and hence belongs to $\tilde{B}\subset \mathrm{GE}_2(A)$. Thus $\mathrm{GE}_2(A)X=\mathrm{GE}_2(A)Y$ in 
$\mathrm{GE}_2(A)\backslash \gl{2}{A}$. We have shown that there is a well-defined map of right $\gl{2}{A}$-sets 
\[
\pi_0(\Gamma(A))\to \mathrm{GE}_2(A)\backslash \gl{2}{A},\ 
p(x)\mapsto \mathrm{GE}_2(A) X
\]
where $X\in\gl{2}{A}$ is any matrix such that $x=\infty\cdot X$. 

Conversely,  the map
 \[
\gl{2}{A}\to \pi_0(\Gamma(A),\ X\mapsto p(\infty\cdot X)
\]
 induces a well-defined map $\mathrm{GE}_2(A)\backslash \gl{2}{A}\to \pi_0(A)$: Let $Y\in \mathrm{GE}_2(A)$. By our preliminary remarks, $Y=DE$ where $D$ is diagonal and $E\in E_2(A)$.
Thus $\infty\cdot YX=\infty \cdot DEX=\infty \cdot EX$ lies in $p(x\cdot X)$ by Lemma \ref{lem:path} again. This map is now clearly inverse to the map defined in the previous paragraph.
\end{proof}

\begin{rem}
For any ring $A$, the inclusion $\spl{2}{A}\to \gl{2}{A}$ induces a bijection of right $\spl{2}{A}$-sets $E_2(A)\backslash \spl{2}{A}\leftrightarrow \mathrm{GE}_2(A)\backslash \gl{2}{A}$.
\end{rem}

We recall  that a ring $A$ \emph{satisfies condition ($\mathrm{GE}_2$)}, or \emph{is a $\mathrm{GE}_2$-ring}, if $\mathrm{GE}_2(A)=\gl{2}{A}$ (Cohn, \cite{cohn:gln}). Since 
$E_2(A)=\spl{2}{A}\cap \mathrm{GE}_2(A)$ and $\gl{2}{A}=\spl{2}{A}\cdot D_2(A)$, this condition is equivalent to $E_2(A)=\spl{2}{A}$.

\begin{cor} \label{cor:pi0}
Let $A$ be a ring. Then $A$ is a $\mathrm{GE}_2$-ring if and only if the graph $\Gamma(A)$ is path-connected. 
\end{cor}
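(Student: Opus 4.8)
The plan is to derive Corollary \ref{cor:pi0} as an immediate consequence of Theorem \ref{thm:pi0} together with the characterization of condition ($\mathrm{GE}_2$) recalled just above the corollary. First I would note that $\Gamma(A)$ is path-connected precisely when $\pi_0(\Gamma(A))$ is a single point (here I am tacitly using that $Y_0(A)$ is non-empty, which is clear since $\infty=(1,0)$ is always a unimodular row). By Theorem \ref{thm:pi0} there is a bijection $\pi_0(\Gamma(A))\leftrightarrow \mathrm{GE}_2(A)\backslash\gl{2}{A}$, so $\pi_0(\Gamma(A))$ is a single point if and only if the coset space $\mathrm{GE}_2(A)\backslash\gl{2}{A}$ is trivial, i.e.\ if and only if $\mathrm{GE}_2(A)=\gl{2}{A}$.

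Next I would invoke the definition of a $\mathrm{GE}_2$-ring: $A$ satisfies condition ($\mathrm{GE}_2$) exactly when $\mathrm{GE}_2(A)=\gl{2}{A}$, which, as recalled in the text (using $E_2(A)=\spl{2}{A}\cap\mathrm{GE}_2(A)$ and $\gl{2}{A}=\spl{2}{A}\cdot D_2(A)$), is in turn equivalent to $E_2(A)=\spl{2}{A}$. Chaining these equivalences gives: $A$ is a $\mathrm{GE}_2$-ring $\iff$ $\mathrm{GE}_2(A)=\gl{2}{A}$ $\iff$ $\mathrm{GE}_2(A)\backslash\gl{2}{A}$ is trivial $\iff$ $\pi_0(\Gamma(A))$ is a single point $\iff$ $\Gamma(A)$ is path-connected.

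Since all the substantive content already resides in Theorem \ref{thm:pi0} and in the elementary remarks about $\mathrm{GE}_2(A)$ preceding the corollary, there is essentially no obstacle here: the proof is a one-line bookkeeping argument. The only point requiring a word of care — and the closest thing to a ``hard part'' — is observing that a coset space $G\backslash H$ is a singleton if and only if $G=H$ (for $G\le H$), and that $\Gamma(A)$ having a non-empty vertex set makes ``one path component'' the same as ``path-connected''; both are trivial but worth stating so that the equivalence reads cleanly in both directions.
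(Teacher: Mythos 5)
Your proposal is correct and follows exactly the route the paper intends: the corollary is an immediate consequence of Theorem \ref{thm:pi0} combined with the definition of a $\mathrm{GE}_2$-ring, which is why the paper states it without a separate proof. Your added remarks about non-emptiness of the vertex set and triviality of the coset space are harmless bookkeeping that the paper leaves implicit.
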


We recall how these properties are related to the classical Euclidean algorithm:

\begin{lem}\label{lem:euclid}
 Let $A$  be a ring and let $(a,b)$ be a unimodular row. The following are equivalent:
\begin{enumerate}
\item There exist $a_0,\ldots,a_n\in A$ such that $[a,b]= \infty\cdot S(a_n)S(a_{n-1})\cdots S(a_0)$.
\item There exist $a_0,\ldots, a_n, ,r_0,\ldots, r_{n-1}\in A$ satisfying the following:\\
 Let $r_{-2}:=a,r_{-1}:=b,r_n:=0$. Then 
\[
r_{k-2}=a_kr_{k-1}+r_k \mbox{ for } 0\leq k\leq n.
\]
(Following the terminology of \cite{czz}, we will say that the pair $(a,b)$  \emph{satisfies a weak Euclidean algorithm}.)
\end{enumerate}
\end{lem}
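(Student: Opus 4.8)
The plan is to establish the equivalence of the two conditions in Lemma \ref{lem:euclid} by a direct computation relating the matrix products to the recursive division process, proceeding by induction on $n$. The underlying observation is that each matrix $S(a_k)$ encodes exactly one step of the iterated division: if we write a $2\times 2$ matrix in terms of its columns, then multiplying on the right by $S(a_k) = \matr{a_k}{1}{1}{0}$ replaces the column pair $(\mathbf{c}_1, \mathbf{c}_2)$ by $(a_k\mathbf{c}_1 + \mathbf{c}_2, \mathbf{c}_1)$, which is precisely the substitution $(r_{k-2}, r_{k-1}) \mapsto (r_{k-1}, r_k)$ read off from the relation $r_{k-2} = a_k r_{k-1} + r_k$. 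So the product $S(a_n)S(a_{n-1})\cdots S(a_0)$ should be built up column-by-column from the successive remainders.

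First I would make the bookkeeping precise. Starting from $r_{-2} = a$, $r_{-1} = b$ and applying the recursion, I claim that
\[
\matr{a}{b}{*}{*}\cdot S(a_0)S(a_1)\cdots S(a_k)
\]
has first row $(r_{k-1}, (-1)^{?}r_k)$ — one needs to track signs coming from the lower-left entry of $S(a_k)$, or alternatively verify the cleaner identity at the level of the top row only. Concretely, if $\mathbf{v}_k := (r_{k-1}, r_k)$ thought of as a row, then one checks $\mathbf{v}_{k-1} \cdot S(a_k) = (r_{k-2}, r_{k-1}) = \mathbf{v}_{k-2}$ fails by indexing, so the honest statement is $\mathbf{v}_k \cdot S(a_k)^{-1}$ or a reindexed version; I would sort out the exact placement by computing the $n=0$ and $n=1$ cases explicitly and then formulating the induction hypothesis to match. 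The upshot for (1) $\Rightarrow$ (2): given $[a,b] = \infty \cdot S(a_n)\cdots S(a_0)$, I set $\mathbf{r}$-values by the division recursion and show the hypothesis $[a,b] = \infty \cdot (\text{product})$ forces $r_n = 0$, because $\infty \cdot M = [1,0]\cdot M$ picks out the top row of $M$, and the top row of $S(a_n)\cdots S(a_0)$ can be identified (via the induction) with a scalar multiple of $(r_{n-1}, r_n)$; unimodularity then pins things down. For (2) $\Rightarrow$ (1), given the division data with $r_n = 0$, I run the same identity in reverse: the product $S(a_n)\cdots S(a_0)$ acting on $\infty$ produces $[r_{n-1}, r_n] = [r_{n-1}, 0] = \infty$-translate, and unwinding back to the $(-2)$ and $(-1)$ indices recovers $[a,b]$.

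Alternatively — and this may be cleaner — I would exploit the already-proven Lemma \ref{lem:path} together with the Remark relating $S(a_i)$ to $E(a_i)$ via $S(a) = \diag{1}{-1}{}\, E(a)$ (in the $2\times 2$ notation: $S(a) = \matr{1}{0}{0}{-1} E(a)$), so that a product of $S$'s differs from a product of $E$'s only by a diagonal matrix, which acts on $\infty$ trivially up to unit scaling. Then condition (1) just says $[a,b]$ lies in the path component of $\infty$ via a specific path, and I would translate the path recursion $x_i = \infty \cdot E(a_i)\cdots E(a_1)X$ from Lemma \ref{lem:path} directly into the remainder recursion by reading off coordinates.

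I expect the main obstacle to be purely the sign and indexing bookkeeping: aligning the off-by-one shift between the matrix index ($S(a_0), \ldots, S(a_n)$, with $n+1$ factors) and the remainder index ($r_{-2}, r_{-1}, r_0, \ldots, r_n$), and correctly tracking the alternating signs $(-1)^i$ that the lower-left $1$ (versus the $-1$ in $E(a)$) introduces — exactly the phenomenon flagged in the Remark after Lemma \ref{lem:path} where $b_i = (-1)^{i-1}a_i$. There is no conceptual difficulty; the content is entirely the observation that right-multiplication by $S(a)$ is one Euclidean division step. I would present the proof by fixing the induction hypothesis as an explicit matrix identity, checking the base case, and doing the one-line inductive step, then reading off both implications as immediate consequences.
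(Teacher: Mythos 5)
Your proposal is correct and takes essentially the same route as the paper: the paper's proof is exactly the observation that $(x,y)S(a)^{-1}=(y,x-ay)$, so that setting $(r_{k-1},r_k):=(a,b)\cdot S(a_0)^{-1}\cdots S(a_k)^{-1}$ telescopes the matrix product into the division recursion, with $(r_{n-1},r_n)=(u,0)$ at the end. The sign bookkeeping you anticipate does not actually arise, since $S(a)$ (unlike $E(a)$) has no $-1$ entry and the identity above is sign-free; the alternating signs $(-1)^{i-1}$ only enter when translating between $S$-products and $E$-products, which this lemma never needs.
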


\begin{proof} We begin by noting that for $a\in A$ and $(x,y)\in A^2$, we have 
\[
(x,y)S(a)^{-1}=(x,y)\left[
\begin{array}{ll}
0&1\\
1&-a
\end{array}
\right]=(y,x-ay).
\]
Thus $(z,w)=(x,y)S(a)^{-1}$ if and only if $z=y$ and $x=ay+w$. 

Suppose now that (1) holds. Lifting the equation to $A^2$ we have 
\[
(a,b)=(u,0)\cdot S(a_n)S(a_{n-1})\cdots S(a_0)
\]
for some unit $u$  in $A$. Now define $(r_{-2},r_{-1}):=(a,b)$ and $(r_{k-1},r_k):=(a,b)\cdot S(a_0)^{-1}\cdots S(a_k)^{-1}$ for $0\leq k\leq n$.
These definitions are consistent since they satisfy $(r_{k-1},r_k)=(r_{k-2},r_{k-1})S(a_k)^{-1}$. Furthermore it follows that $r_{k-2}=a_kr_{k-1}+r_k$.  Finally note that
$(r_{n-1},r_n)=(a,b)\cdot S(a_0)^{-1}\cdots S(a_n)^{-1}=(u,0)$ so that $r_n=0$ (and $r_{n-1}=u$), as required.

Conversely, suppose that (2) holds. Then we have that $(r_{-2},r_{-1})=(a,b)$ and $(r_{k-1},r_k)=(r_{k-2},r_{k-1})S_{a_k}^{-1}$ for $0\leq k\leq n$. It follows that 
\[
(a,b)\cdot S(a_0)^{-1}\cdots S(a_n)^{-1}=(r_{n-1},r_n)=(r_{n-1},0)
\]
where $r_{n-1}$ is necessarily a unit. Hence (1) holds (since then $\infty =[r_{n-1},0]$). 
\end{proof}

\begin{prop}\label{prop:euclid} For any ring $A$, the following are equivalent:
\begin{enumerate}
\item $\Gamma(A)$ is path-connected.
\item For any unimodular row $(a,b)$,  there exist $a_0,\ldots,a_n\in A$ such that \\
$(a,b)= \infty\cdot S(a_n)S(a_{n-1})\cdots S(a_0)$.
\item Any unimodular pair $(a,b)$ satisfies a weak Euclidean algorithm. 
\item $A$ is a $\mathrm{GE}_2$-ring.
\end{enumerate}
\end{prop}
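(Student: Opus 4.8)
The plan is to prove the equivalence of (1)--(4) by a cycle of implications, using the already-established results to do almost all the work. Note first that (1) $\iff$ (4) is exactly Corollary~\ref{cor:pi0}, and (2) $\iff$ (3) is exactly Lemma~\ref{lem:euclid} applied to every unimodular row. So the real content is to link these two pairs, and the cleanest route is to show (4) $\imp$ (2) and (2) $\imp$ (1) (or, symmetrically, (1) $\imp$ (2), which is arguably the most natural direction given Lemma~\ref{lem:path}).

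First I would prove (1) $\imp$ (2). Given a unimodular row $(a,b)$, since $\Gamma(A)$ is path-connected there is a path in $\Gamma(A)$ from $\infty$ to $[a,b]$. By Lemma~\ref{lem:path} (with $X = \id{}$, so that $x_0 = \infty$), this path has the form $x_i = \infty\cdot E(a_i)\cdots E(a_1)$, ending at $[a,b] = \infty\cdot E(a_n)\cdots E(a_1)$. Now I convert between the $E(a)$'s and the $S(a)$'s: by the Remark following Lemma~\ref{lem:path}, $\infty\cdot E(a_n)\cdots E(a_1) = \infty\cdot S(b_n)\cdots S(b_1)$ where $b_i = (-1)^{i-1}a_i$. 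Reindexing to match the statement of (2) (which writes $S(a_n)\cdots S(a_0)$), this gives precisely the desired expression $(a,b) = \infty\cdot S(c_m)\cdots S(c_0)$ for suitable $c_j\in A$ — up to the harmless ambiguity of a unit scalar, which is absorbed into the equality in $Y_0(A)$.

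Next, (2) $\imp$ (4): given $M = \matr{a}{b}{c}{d}\in \spl{2}{A}$, its first row $(a,b)$ is unimodular, so by (2) we may write $(a,b) = \infty\cdot S(a_n)\cdots S(a_0)$ in $Y_0(A)$, i.e.\ $(a,b) = (u,0)\cdot S(a_n)\cdots S(a_0)$ in $A^2$ for some unit $u$. Since each $S(a_i)\in\gl{2}{A}$ and $\diag{u}{}{u^{-1}}\in \mathrm{GE}_2(A)$ (it lies in $T$), one checks that $N := \diag{u}{}{u^{-1}}^{-1}\cdot M\cdot (S(a_n)\cdots S(a_0))^{-1}$ — more precisely, the matrix obtained from $M$ by right-multiplying by $S(a_0)^{-1}\cdots S(a_n)^{-1}$ and left-correcting the scalar — has first row $(1,0)$, hence lies in $\tilde B$, the lower-triangular subgroup, which is contained in $\mathrm{GE}_2(A)$. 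Since each $S(a_i) = \diag{1}{}{-1}\cdot E(a_i)\in \mathrm{GE}_2(A)$, it follows that $M\in \mathrm{GE}_2(A)$; as $M$ was an arbitrary element of $\spl{2}{A}$ and $\gl{2}{A} = \spl{2}{A}\cdot D_2(A)$, we get $\mathrm{GE}_2(A) = \gl{2}{A}$, which is condition ($\mathrm{GE}_2$). Combined with (1) $\iff$ (4) (Corollary~\ref{cor:pi0}) and (2) $\iff$ (3) (Lemma~\ref{lem:euclid}), this closes the cycle and proves all four conditions equivalent.

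I do not expect any genuine obstacle here: every implication is a short bookkeeping argument built on Lemma~\ref{lem:path}, Lemma~\ref{lem:euclid}, and Corollary~\ref{cor:pi0}. The only point requiring a little care is the constant translation between rows in $A^2$ and classes in $Y_0(A)$ — i.e.\ keeping track of the unit scalar $u$ that appears when lifting $\infty\cdot(\text{product of }S(a_i))$ back to $A^2$ — and the sign/reindexing bookkeeping in passing between the $E(a)$ and $S(a)$ presentations. Neither is serious, and in fact the most economical writeup may simply observe that (1) $\iff$ (4) and (2) $\iff$ (3) are already proved, and then dispatch (1) $\imp$ (2) and (3) $\imp$ (1) directly; the latter follows because a weak Euclidean algorithm for $(a,b)$ yields, via Lemma~\ref{lem:euclid}(1) and Lemma~\ref{lem:path}, an explicit path in $\Gamma(A)$ from $\infty$ to $[a,b]$, and since $\gl{2}{A}$ acts transitively on $Y_0(A)$ with $\infty$ reachable, every vertex is then in the path component of $\infty$.
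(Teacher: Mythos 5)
Your proposal is correct and follows essentially the same route as the paper: the paper's proof simply observes that (1)$\iff$(4) is Corollary~\ref{cor:pi0}, (1)$\iff$(2) is Lemma~\ref{lem:path} together with the remark following it, and (2)$\iff$(3) is Lemma~\ref{lem:euclid}. Your extra direct verification of (2)$\imp$(4) is logically redundant given (1)$\iff$(4), but it is correct and harmless.
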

\begin{proof} 
(1) and (4) are equivalent by Corollary \ref{cor:pi0}. (1) and (2) are equivalent by Lemma \ref{lem:path} (and the remark that follows it). (2) and (3) are equivalent by Lemma \ref{lem:euclid}.
\end{proof}

\begin{exa}  Dennis, Magurn and Vaserstein (\cite{dmv},1984) have shown that $\Z[C_n]$ is $GE_2$ring for all $n\geq 1$, where $C_n$ denotes the cyclic group of order $n$. 
Thus the graph $\Gamma(\Z[C_n])$ is path-connected.
\end{exa}

\begin{exa} 
 If $\mathcal{O}$ is a ring of $S$-integers in a number field $F$ and if the group of units $\mathcal{O}^\times$  is infinite then $\mathcal{O}$ is a $\mathrm{GE}_2$-ring (Vaserstein \cite{vas},1972), and hence $\Gamma(\mathcal{O})$ is path-connected. 
\end{exa}

When $A$ is an integral domain the conditions of Proposition \ref{prop:euclid} are also equivalent to the existence of certain continued fraction expansions:

\begin{rem}
Since $x\cdot S(a)=a+\frac{1}{x}$ for $x\in \projl{F}$, we have $\frac{a}{b}=\infty\cdot S(a_n)S(a_{n-1})\cdots S(a_0)$ if and only if 
\[
\frac{a}{b}=
a_0+\cfrac{1}{a_{1}+\cfrac{1}{a_{2}+\cfrac{1}{a_{3}+\cfrac{1}{\ddots+\cfrac{1}{a_n}}}}}\quad\mbox{ in }\projl{F}.
\]
Thus the B\'ezout point $a/b$ lies in the path component of $\infty\in \Gamma(A)$ if and only if $a/b$ admits a finite continued fraction expansion  (with entries in $A$).

Of course, it also follows that any point in $\projl{F}$ admitting a finite continued fraction expansion is necessarily a B\'ezout point, since it lies in the orbit $\infty\cdot \gl{2}{A}$. 
(The connection between weak Euclidean algorithms and continued fraction expansions goes back at least to the work of G.E.Cooke, \cite{cooke:ge}.) 
\end{rem}

We thus have:
\begin{cor}\label{cor:pathconn} Let $A$ be an integral domain with field of fractions $F$. The following are equivalent:
\begin{enumerate}
\item $\Gamma(A)$ is path-connected.
\item For every B\'ezout point $x\in \projl{F}$ there exist $a_0,\ldots,a_n\in F$ with\\
 $x=\infty\cdot S(a_n)S(a_{n-1})\cdots S(a_0)$.
 \item Every B\'ezout pair satisfies a weak Euclidean algorithm.
\item $A$ is  a $\mathrm{GE}_2$-ring. 
\item Every B\'ezout point in $\projl{F}$ admits a finite continued fraction expansion  with entries in $A$.
\end{enumerate}
\end{cor}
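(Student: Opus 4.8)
The plan is to deduce all five equivalences from Proposition \ref{prop:euclid} by translating its ``unimodular row'' conditions into the ``B\'ezout point'' language of the Corollary, using the identification of $Y_0(A)$ with the set of B\'ezout points of $\projl{F}$ furnished by Lemma \ref{lem:bezout}, together with the continued-fraction dictionary recorded in the remark immediately preceding the Corollary. No genuinely new argument is required; the work is entirely one of reformulation.

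First I would record that (1) $\iff$ (4) is already contained in Proposition \ref{prop:euclid} (equivalently, in Corollary \ref{cor:pi0}). Next I would match up the remaining conditions. By Lemma \ref{lem:bezout} the vertices of $\Gamma(A)$ are exactly the B\'ezout points of $\projl{F}$, and every B\'ezout pair $(a,b)$ factors as $(ca',cb')$ with $(a',b')$ unimodular and $a/b=a'/b'$. Under this dictionary, condition (2) of Proposition \ref{prop:euclid} --- that every unimodular row can be written $\infty\cdot S(a_n)\cdots S(a_0)$ with the $a_i$ in $A$ --- is condition (2) of the Corollary; note that, since each $S(a_i)$ with $a_i\in A$ lies in $\gl{2}{A}$, the intermediate points of such an expansion automatically lie in $\infty\cdot\gl{2}{A}$ and so are B\'ezout points, which is what makes this condition visibly equivalent to path-connectedness. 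Likewise condition (3) of Proposition \ref{prop:euclid} is condition (3) of the Corollary, once one notes that a remainder sequence $r_{-2}=a$, $r_{-1}=b$, $r_{k-2}=a_kr_{k-1}+r_k$, $r_n=0$ for $(a,b)=(ca',cb')$ corresponds --- upon dividing every $r_k$ by $c$ while keeping the quotients $a_k$ --- to such a sequence for $(a',b')$, and conversely; so the weak Euclidean property is insensitive to the common factor $c$.

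Finally, (2) $\iff$ (5) is immediate from the remark preceding the Corollary: by the identity $x\cdot S(a)=a+\frac{1}{x}$ in $\projl{F}$, the equation $x=\infty\cdot S(a_n)S(a_{n-1})\cdots S(a_0)$ says exactly that $x$ has the finite continued fraction expansion with successive partial quotients $a_0,\dots,a_n$.

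Because the whole statement is a translation of Proposition \ref{prop:euclid}, there is no real obstacle. The only point that calls for (minor) care is the passage, in conditions (2) and (3), between a B\'ezout pair and its unimodular representative: one must check that clearing or reinstating the common factor $c$ disturbs neither the existence of the $S$-matrix (continued-fraction) expansion nor that of the weak Euclidean algorithm. This is the routine verification sketched above.
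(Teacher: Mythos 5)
Your proposal is correct and follows essentially the same route as the paper, which derives the corollary directly from Proposition \ref{prop:euclid} and the continued-fraction remark via the identification of $Y_0(A)$ with the B\'ezout points in Lemma \ref{lem:bezout}; indeed the paper offers no further argument beyond ``We thus have.'' You even make explicit the one point the paper leaves tacit, namely that clearing the common factor $c$ from a B\'ezout pair $(ca',cb')$ preserves the weak Euclidean property (and note in passing that the $a_i$ in condition (2) should be read as lying in $A$, as the paper surely intends).
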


\begin{cor}\label{cor:euclid}
 If $A$ is a Euclidean domain then $Y_0(A)=\projl{A}$ and $\Gamma(A)$ is path-connected. 
\end{cor}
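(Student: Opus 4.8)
The first assertion is immediate from the structure theory: a Euclidean domain is a PID, hence a B\'ezout domain, so by Lemma~\ref{lem:bezout} and the corollary following it, $Y_0(A)$ is identified with $\projl{F}$, every point of $\projl{F}$ being an $A$-B\'ezout point; this is what $Y_0(A)=\projl{A}$ records. So the work is in the path-connectedness, and the plan is to verify condition (3) of Proposition~\ref{prop:euclid} --- that every unimodular pair satisfies a weak Euclidean algorithm --- which is essentially the classical Euclidean algorithm.

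Fix a Euclidean function $\phi$ on $A$ (with values in $\N$, strictly decreasing under division with remainder), and let $(a,b)$ be a unimodular row. If $b=0$, then $\an{a}=A$ forces $a\in A^\times$, so $[a,b]=[1,0]=\infty$ lies trivially in the path component of $\infty$; assume therefore $b\neq 0$. Set $r_{-2}:=a$ and $r_{-1}:=b$. Inductively, as long as $r_{k-1}\neq 0$, use division with remainder to write $r_{k-2}=a_kr_{k-1}+r_k$ with $r_k=0$ or $\phi(r_k)<\phi(r_{k-1})$. The nonzero remainders have strictly decreasing $\phi$-values, a strictly decreasing sequence in $\N$, so after finitely many steps we reach $r_n=0$; take $n\geq 0$ minimal with this property.

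It remains to observe that the data $a_0,\ldots,a_n$, $r_0,\ldots,r_{n-1}$ so produced is exactly a weak Euclidean algorithm for $(a,b)$ in the sense of Lemma~\ref{lem:euclid}(2): by construction $r_{k-2}=a_kr_{k-1}+r_k$ for $0\leq k\leq n$, with $r_{-2}=a$, $r_{-1}=b$, $r_n=0$. Moreover $r_{n-1}\neq 0$ by minimality of $n$, and downward induction on the relations $r_{n-2}=a_nr_{n-1}$ and $r_{k-3}=a_{k-1}r_{k-2}+r_{k-1}$ shows $r_{n-1}\mid r_k$ for every $k\leq n-1$, in particular $r_{n-1}\mid a$ and $r_{n-1}\mid b$; since $\an{a,b}=A$ this gives $\an{r_{n-1}}=A$, i.e.\ $r_{n-1}\in A^\times$ (which is precisely the unit condition implicitly used in Lemma~\ref{lem:euclid}). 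Hence condition (3) of Proposition~\ref{prop:euclid} holds, and by that proposition $\Gamma(A)$ is path-connected.

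\textbf{Expected main obstacle.} There is no substantial difficulty here: the only points requiring care are the degenerate cases ($b=0$, or $b$ a unit, where $n=0$), matching the index conventions of Lemma~\ref{lem:euclid}(2), and checking that the final nonzero remainder $r_{n-1}$ is a unit --- this last being the one place where the hypothesis that $(a,b)$ is \emph{unimodular} (rather than merely nonzero) is used. Everything else is the standard termination argument for the Euclidean algorithm.
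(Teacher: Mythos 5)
Your argument is correct and is essentially the paper's proof: the paper likewise notes that a Euclidean domain is a PID, hence B\'ezout (giving the identification of $Y_0(A)$), and then invokes the classical division algorithm to produce a weak Euclidean algorithm for every unimodular pair, appealing to Proposition~\ref{prop:euclid}. You have merely spelled out the termination and unit-remainder details that the paper compresses into ``using the division algorithm in the usual way.''
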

\begin{proof} 
A Euclidean domain is a principal ideal domain, hence a B\'ezout domain. Any non-zero pair of elements of $A$ satisfies a weak Euclidean algorithm using the division algorithm  in the usual way. 
\end{proof}

On the other hand, there exist PIDs $A$ for which $\Gamma(A)$ is not path-connected:
\begin{exa}[ Cohn, \cite{cohn:gln}] \label{exa:cohn} 
The ring $A=\Z\left[ \frac{1+\sqrt{-19}}{2}\right]$ is a principal ideal domain but is not a $\mathrm{GE}_2$-ring. Thus 
$\Gamma(A)$ is not path-connected. 
\end{exa}

\begin{exa}[Cossu, Zanardo, Zannier, \cite{czz}]\label{exa:czz} 
 Let $k$ be a field. Let $\mathcal{C}\subset \mathbb{P}^2$ be a smooth projective curve of genus $0$ over $k$. Let $\mathcal{C}_0:=\mathcal{C}\cap \mathbb{A}^2$. 
If $\mathcal{C}(k)=\emptyset$ then the coordinate ring $k[\mathcal{C}_0]$ is a PID which is not a $\mathrm{GE}_2$-ring (\cite[Corollary 3.6]{czz}). 

 For example, take 
$k=\R$ and let $\mathcal{C}$ be the curve with homogeneous equation $x^2+y^2+z^2=0$. Then the ring $\R[\mathcal{C}_0]\cong \R[x,y]/\an{ x^2+y^2+1}$ is a PID but 
not a $\mathrm{GE}_2$-ring. 
\end{exa}

\begin{exa}[Cossu, Zanardo, Zannier, \cite{czz} again]\label{exa:czz2} 
 Let $k$ be a field. Let $\mathcal{C}$ be a smooth curve of genus $\geq 1$ over $k$ with a unique point at infinity. If the coordinate ring $k[\mathcal{C}_0]$ is a PID then it is not 
a $\mathrm{GE}_2$-ring (\cite[Theorem 3.11, Corollary 3.12]{czz}).

For example, let $k$ be a perfect field. Let $f(x)\in k[x]$ be a cubic with three distinct roots. Let $E\subset \mathbb{P}^2$ be th elliptice curve $y^2=f(x)$. It has a unique point, $P_\infty$ say,  at infinity. Suppose that $E(k)=\{ P_\infty\}$. Then the affine coordinate ring $k[E_0]=k[x,y]\an{y^2-f(x)}$ is a PID which is not a $\mathrm{GE}_2$-ring.  
\end{exa}

\section{Rings which are universal for $\mathrm{GE}_2$ and the group $C(A)$}\label{sec:univ}
Cohn \cite{cohn:gln} notes some universal relations satisfied by elementary matrices: Let $A$ be a ring.  For $u\in A^\times$, let $D(u):=\mathrm{diag}(u,u^{-1})\in \spl{2}{A}$. Observe that 
$D(u)=E(-u)E(-u^{-1})E(-u)\in E_2(A)$. Then we have
\begin{eqnarray*}
D(u)D(v)&=& D(uv) \mbox{ for all } u,v\in A^\times. \\
E(a)E(0)E(b)&=&-E(a+b)=D(-1)E(a+b)\mbox{ for all }a,b\in A\\
D(u)E(a)D(u)&=& E(u^2a)\mbox{ for all } a\in A,u\in A^\times.\\
\end{eqnarray*}
A ring $A$ a said to be a \emph{universal for $\mathrm{GE}_2$} if all relations among elementary matrices in $E_2(A)=\spl{2}{A}$ are consequences of these three
(families of) relations. We now spell out this condition in more explicitly.

Let $C(A)$ denote the group with generators $\epsilon(a),a\in A$ subject to the following three relations:
\begin{enumerate} 
\item For $u\in A^\times$, let $h(u):=\epsilon(-u)\epsilon(-u^{-1})\epsilon(-u)$. Then $h(u)h(v)=h(uv)$ for all $u,v\in A^\times$.
\item $\epsilon(a)\epsilon(0)\epsilon(b)=h(-1)\epsilon(a+b)$ for all $a,b\in A$.
\item $h(u)\epsilon(a)h(u)=\epsilon(u^2a)$ for all $u\in A^\times$, $a\in A$. 
\end{enumerate}
Because of Cohn's relations above, there is a well-defined  group homomorphism $\psi:C(A)\to \spl{2}{A}$ sending $\epsilon(a)$ to $E(a)$ (and hence $h(u)$ to $D(u)$). The image of $\psi$ is clearly 
$E_2(A)$ and thus $A$ is a $\mathrm{GE}_2$-ring if and only if $\psi$ is surjective. Furthermore,  $A$ is universal for $\mathrm{GE}_2$ if and only if 
$\psi$ is injective.  We will denote $\ker{\psi}$ by $U(A)$. So a ring $A$ is  universal for $\mathrm{GE}_2$ if and only if $U(A)=\{ 1\}$. 

Relation (1) implies $h(1)=1$ in $C(A)$ and hence $h(-1)^2=h(1)=1$. Furthermore $h(-1)$ is central in $C(A)$ by (3). We note also that letting $a=b=0$ in (2) gives the identity
$\epsilon(0)^2=h(-1)$ and hence $\epsilon(0)^4=1$. 

For any $a\in A$, letting $b=-a$ in (2) gives $\epsilon(a)\epsilon(0)\epsilon(-a)=h(-1)\epsilon(0)=\epsilon(0)^3=\epsilon(0)^{-1}$ and hence 
\[
\epsilon(a)^{-1}=\epsilon(0)\epsilon(-a)\epsilon(0)\mbox{ in } C(A)
\]
for all $a\in A$. 

For $a\in A$, let $y(a):=\epsilon(0)^3\epsilon(a)=h(-1)\epsilon(0)\epsilon(a)\in C(A)$. Thus $\psi(y(a))=E(0)^3E(a)=E_{2,1}(a)\in B:=B(\spl{2}{A})$. 
\begin{lem}\label{lem:ya} Let $A$  be a ring.
\begin{enumerate}
\item $y(a)y(b)=y(a+b)$ in $C(A)$  for all $a,b\in A$.
\item  For all $u\in A^\times$, $a\in A$ we have 
\[
y(a)^{h(u)}:=h(u)^{-1}y(a)h(u)=y(u^2a) \mbox{ in }C(A).
\]
\end{enumerate}
\end{lem}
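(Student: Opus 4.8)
The plan is to prove both parts by direct computation inside the group $C(A)$, using only the defining relations (1)--(3) together with the consequences already derived in the excerpt, namely $h(1)=1$, $h(-1)^2=1$, $h(-1)$ central, $\epsilon(0)^2=h(-1)$, $\epsilon(0)^4=1$, and $\epsilon(a)^{-1}=\epsilon(0)\epsilon(-a)\epsilon(0)$. Recall $y(a)=\epsilon(0)^3\epsilon(a)=h(-1)\epsilon(0)\epsilon(a)$.

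For part (1), I would compute $y(a)y(b)=h(-1)\epsilon(0)\epsilon(a)\cdot h(-1)\epsilon(0)\epsilon(b)$. Since $h(-1)$ is central and $h(-1)^2=1$, this collapses to $\epsilon(0)\epsilon(a)\epsilon(0)\epsilon(b)$. The key move is to massage the middle $\epsilon(a)\epsilon(0)\epsilon(b)$ using relation (2): $\epsilon(a)\epsilon(0)\epsilon(b)=h(-1)\epsilon(a+b)$. This gives $\epsilon(0)\cdot h(-1)\epsilon(a+b)=h(-1)\epsilon(0)\epsilon(a+b)=y(a+b)$, as desired. So part (1) is a two-line manipulation once one remembers to insert relation (2) at the right spot.

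For part (2), I would compute $h(u)^{-1}y(a)h(u)=h(u)^{-1}h(-1)\epsilon(0)\epsilon(a)h(u)$. Using centrality of $h(-1)$, pull it out front: $=h(-1)\,h(u)^{-1}\epsilon(0)\epsilon(a)h(u)$. Now I need to conjugate $\epsilon(0)\epsilon(a)$ by $h(u)$; the natural approach is to insert $h(u)h(u)^{-1}=1$ between the two factors, so that $h(u)^{-1}\epsilon(0)\epsilon(a)h(u)=\bigl(h(u)^{-1}\epsilon(0)h(u)\bigr)\bigl(h(u)^{-1}\epsilon(a)h(u)\bigr)$. Relation (3) says $h(u)\epsilon(a)h(u)=\epsilon(u^2a)$, i.e. $h(u)^{-1}\epsilon(a)h(u)=h(u)^{-2}\epsilon(u^2a)$; but we also have $h(u)^{-2}=h(u^{-1})^2=h(u^{-2})$ by relation (1) — here I should be a little careful and instead use relation (3) in the symmetric form, noting $h(u)^{-1}=h(u^{-1})$ by (1) and applying (3) with $u$ replaced by $u^{-1}$ where convenient. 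The cleanest path is: from (3), $h(u)\epsilon(b)h(u)=\epsilon(u^2b)$ for all $b$, hence $\epsilon(b)=h(u)^{-1}\epsilon(u^2b)h(u)^{-1}$, equivalently $h(u)^{-1}\epsilon(c)h(u)^{-1}=\epsilon(u^{-2}c)$. Applying this to both $\epsilon(0)$ (giving $\epsilon(0)$, since $u^{-2}\cdot 0=0$) and $\epsilon(a)$, and reconciling the $h(u)$ versus $h(u)^{-1}$ bookkeeping via $h(u)h(u)^{-1}=1$ and $h(-1)$ central, one arrives at $h(-1)\epsilon(0)\epsilon(u^2a)=y(u^2a)$.

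The main obstacle I anticipate is the index bookkeeping in part (2): keeping straight whether one is conjugating by $h(u)$ or $h(u)^{-1}$, and correctly using relation (3) (which is stated in the non-standard "sandwich" form $h(u)\epsilon(a)h(u)=\epsilon(u^2a)$ rather than as a genuine conjugation) together with $h(u)^{-1}=h(u^{-1})$ from relation (1). The trick that makes it routine is to first rewrite (3) as $h(u)^{-1}\epsilon(c)h(u)^{-1}=\epsilon(u^{-2}c)$ and then, since $\epsilon(0)$ is fixed by this operation, to handle the $\epsilon(0)$ and $\epsilon(a)$ factors of $y(a)$ separately, absorbing all the extra powers of $h(\pm1)$ using centrality and $h(-1)^2=1$. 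I would not expect to need any relation or identity beyond those already recorded in the excerpt.
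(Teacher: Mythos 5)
Your proposal is correct and follows essentially the same route as the paper: part (1) is verbatim the paper's computation (cancel the central $h(-1)$'s, apply relation (2) to the inner $\epsilon(a)\epsilon(0)\epsilon(b)$), and for part (2) the paper likewise writes $h(u)^{-1}=h(u^{-1})$ via relation (1), inserts $h(u^{-1})h(u)=1$ between $\epsilon(0)$ and $\epsilon(a)$, and applies relation (3) as a sandwich to each factor, giving $h(-1)\bigl(h(u^{-1})\epsilon(0)h(u^{-1})\bigr)\bigl(h(u)\epsilon(a)h(u)\bigr)=h(-1)\epsilon(0)\epsilon(u^2a)=y(u^2a)$. Your "cleanest path" is exactly this; just make sure the inverted form $h(u)^{-1}\epsilon(c)h(u)^{-1}=\epsilon(u^{-2}c)$ is applied only to the $\epsilon(0)$ factor while relation (3) in its original form handles $\epsilon(a)$, which is forced by requiring the inserted middle term to be the identity.
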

\begin{proof}
\begin{enumerate}
\item Let $a,b\in A$. Then
\begin{eqnarray*}
y(a)y(b)&=& h(-1)\epsilon(0)\epsilon(a)h(-1)\epsilon(0)\epsilon(b)\\
&=& \epsilon(0)\epsilon(a)\epsilon(0)\epsilon(b)\mbox{ (since $h(-1)$ is central, $h(-1)^2=1$)}\\
&=& \epsilon(0)h(-1)\epsilon(a+b)=y(a+b)\mbox{ (using relation (2))}.
\end{eqnarray*}
\item Let $u\in A^\times$, $a\in A$. Then 
\begin{eqnarray*}
h(u)^{-1}y(a)h(u)&=& h(u^{-1})h(-1)\epsilon(0)\epsilon(a)h(u)\\
&=&h(-1)h(u^{-1})\epsilon(0)h(u^{-1})h(u)\epsilon(a)h(u) \mbox{( using relation  (1))}\\
&=& h(-1)\epsilon(0)\epsilon(u^2a)=y(u^2a) \mbox{ (using relation (2))}.
\end{eqnarray*}
\end{enumerate}
\end{proof}
 More generally, for any $u\in A^\times$, $a\in A$ we let
$\beta(u,a):=h(u)y(ua)\in C(A)$. Thus
\[
\psi(\beta(u,a))=D(u)E_{2,1}(ua)=
\left[
\begin{array}{cc}
u&0\\
a&u^{-1}\\
\end{array}
\right]\in B.
\]

\begin{lem}\label{lem:b}
Let $\mathbb{B}$ denote the subset $\{ \beta(u,a)\in C(A)\ |\ u\in A^\times, a\in A\}$ of $C(A)$. Then $\mathbb{B}$ is a subgroup and $\psi:\mathbb{B}\to B$ is a group isomorphism.
\end{lem}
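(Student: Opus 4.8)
The plan is to show $\mathbb{B}$ is closed under multiplication and inversion, and that $\psi|_{\mathbb{B}}$ is a bijection onto $B$. For closure under multiplication, I would compute $\beta(u,a)\beta(v,b) = h(u)y(ua)h(v)y(vb)$. The key move is to commute $y(ua)$ past $h(v)$ using Lemma \ref{lem:ya}(2): $y(ua)h(v) = h(v)y(v^{-2}ua)$, so that $\beta(u,a)\beta(v,b) = h(u)h(v)y(v^{-2}ua)y(vb) = h(uv)y(v^{-2}ua + vb)$ using relation (1) and Lemma \ref{lem:ya}(1). To recognize this as an element $\beta(uv,c)$, I need $v^{-2}ua + vb = (uv)c$, i.e. $c = v^{-3}a u (uv)^{-1}\cdot(uv)$... more simply $c = (uv)^{-1}(v^{-2}ua + vb)$, which lies in $A$ since $u,v\in A^\times$. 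So $\beta(u,a)\beta(v,b) = \beta(uv, (uv)^{-1}(v^{-2}ua+vb))$, proving closure. Inverses then follow (or can be checked directly: $\beta(u,a)^{-1} = \beta(u^{-1}, -u a)$ after a similar computation), and $\beta(1,0) = h(1)y(0) = 1$ is the identity, so $\mathbb{B}$ is a subgroup.

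For the isomorphism, surjectivity of $\psi:\mathbb{B}\to B$ is immediate: a general element of $B = B(\spl{2}{A})$ has the form $\left[\begin{array}{cc} u & 0 \\ a & u^{-1} \end{array}\right]$ for some $u\in A^\times$, $a\in A$, which is exactly $\psi(\beta(u,a))$ as recorded before the lemma. For injectivity I would argue that $\beta(u,a) = 1$ in $C(A)$ forces $u=1$ and $a=0$. Applying $\psi$ gives $D(u)E_{2,1}(ua) = I$ in $\spl{2}{A}$, and reading off matrix entries forces $u = 1$, $a = 0$; hence $\beta(u,a) = \beta(1,0) = 1$ was the only preimage, and more generally $\psi|_{\mathbb{B}}$ is injective since $\beta(u,a)\beta(v,b)^{-1} \in \ker\psi \cap \mathbb{B}$ reduces to the identity element by the same entry-comparison. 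Thus $\psi|_{\mathbb{B}}$ is a group isomorphism onto $B$.

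I expect the main obstacle to be purely bookkeeping: getting the conjugation identity in Lemma \ref{lem:ya}(2) applied in the correct direction (it is stated as $h(u)^{-1}y(a)h(u) = y(u^2a)$, so $y(a)h(u) = h(u)y(u^2 a)$ and $y(a)h(v)^{-1} = h(v)^{-1}y(v^{-2}a)$ — care is needed because $A$ is commutative but the exponents on $v$ matter), and then simplifying the resulting scalar $(uv)^{-1}(v^{-2}ua + vb)$ correctly so that the product genuinely has the form $\beta(uv, -)$. Since $A$ is commutative, there is no subtlety beyond tracking powers of units, and the injectivity half is essentially free once one observes $\psi$ is already known to be well-defined and that $B\hookrightarrow\spl{2}{A}$ lets us detect triviality on matrix entries. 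No genuinely hard step is anticipated; the content is that $\mathbb{B}$ is the expected Borel-type subgroup of $C(A)$ faithfully represented by $\psi$.
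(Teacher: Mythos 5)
Your proposal takes exactly the paper's route: expand $\beta(u,a)\beta(v,b)=h(u)y(ua)h(v)y(vb)$, commute $y(ua)$ past $h(v)$ using Lemma \ref{lem:ya}(2), combine via relation (1) and Lemma \ref{lem:ya}(1) to land back in $\mathbb{B}$, read off inverses, and obtain injectivity of $\psi|_{\mathbb{B}}$ from $\ker{\psi}\cap\mathbb{B}=\{1\}$ by inspecting the entries of $D(u)E_{2,1}(ua)$; the surjectivity and kernel arguments are correct and identical to the paper's. The one substantive complaint is that your central identity has the exponent inverted: since $h(v)^{-1}y(c)h(v)=y(v^{2}c)$, one gets $y(ua)h(v)=h(v)\,y(v^{2}ua)$, not $h(v)y(v^{-2}ua)$ --- you state the correct direction in your closing paragraph but use the wrong one in the displayed computation. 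The corrected product formula is $\beta(u,a)\beta(v,b)=h(uv)y(uav^{2}+vb)=\beta(uv,\,av+u^{-1}b)$, whence $\beta(u,a)^{-1}=\beta(u^{-1},-a)$; your claimed inverse $\beta(u^{-1},-ua)$ is wrong (multiplying its image under $\psi$ by $\psi(\beta(u,a))$ leaves a $(2,1)$-entry $au^{-1}-a$, which need not vanish), and it is not even what your own product formula would predict. These are bookkeeping slips rather than a gap: whatever the exponent, the product has the form $h(uv)y(d)$ with $d\in A$, hence equals $\beta(uv,(uv)^{-1}d)\in\mathbb{B}$, so closure, the subgroup property, and the isomorphism all still follow. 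But the explicit formulas should be fixed before being quoted elsewhere.
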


\begin{proof} Let $u,v\in A^\times$, $a,b\in A$. Then
\begin{eqnarray*}
\beta(u,a)\beta(v,b)&=&h(u)y(ua)h(v)y(vb)\\
&=&h(u)h(v)y(uav^2)y(vb) \mbox{ by Lemma \ref{lem:ya} (2)}\\
&=& h(uv)y(uav^2+vb)\mbox{ by Lemma \ref{lem:ya} (1) }\\
&=& h(uv)y(uv(av+u^{-1}b))\\
&=&\beta(uv,av+u^{-1}b).\\
\end{eqnarray*}
It follows that $\mathbb{B}$ is closed under multiplication in $C(A)$. Furthermore this formula tells us that $\beta(u,a)^{-1}=\beta(u^{-1},-a)\in \mathbb{B}$ for all 
$u\in A^\times$, $a\in A$. So $\mathbb{B}$ is a subgroup of $C(A)$ and $\psi:\mathbb{B}\to B$ is a surjective homomorphism. 

Of course $\psi(\beta(u,a))=D(u)E_{2,1}(ua)=1$ if and only if $u=1$ and $a=0$. Thus $\ker{\psi}\cap \mathbb{B}=\{ \beta(1,0)\}=\{ 1\}$. So $\psi$ is an isomorphism 
as required. 
\end{proof}

We will regard the inverse isomorphism $B\to \mathbb{B}$ and the resulting embedding $B\to C(A)$ as the standard embedding of $B$ in $C(A)$ and will denote this map by $\mathrm{st}$.
Thus, in particular, $\mathrm{st}(E_{2,1}(a))=y(a)$ for all $a\in A$ and $\mathrm{st}(D(u))=h(u)$ for all $u\in A^\times$.

\begin{cor}\label{cor:b} The map $\mathbb{B}\to A^\times$, $\beta(u,a)\mapsto u$ is a well-defined group homomorphism whose kernel is the subgroup $\{ y(a)\ |a\in A\}$ which is 
isomorphic to the additive group $A$.
\end{cor}
If $\beta\in \mathbb{B}$, we will let $u(\beta)$ denote the associated unit. 

Recall that $\spl{2}{A}$ acts on $Y_0(A)$ and hence $B$ acts by restriction. Furthermore, if $a=[a,1]\in A_+$, then $a\cdot X\in A$ for all $X\in B$ (since $B$ stabilizes $\infty$ 
and sends neighbours to neighbours).
 Via the isomorphism $\psi$ there is thus a natural action of $\mathbb{B}$  on $A$; i.e., $a\cdot \beta:= a\cdot\psi(\beta)$ for all $\beta\in \mathbb{B}$ (and hence also, 
$a\cdot Z=a\cdot \stand{Z}$ for all $Z\in B$).  Explicitly we have:
\[
a\cdot \beta(u,b)=au^2+bu\mbox{ for all }a,b\in A, u\in A^\times.
\]

The following lemma will play a key role below.

\begin{lem}\label{lem:key}
Let $a\in A$, $\beta\in \mathbb{B}$. Then
\[
\epsilon(a\cdot \beta)=h(u(\beta))\epsilon(a)\beta \mbox{ in } C(A).
\]
\end{lem}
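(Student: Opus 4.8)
The plan is to reduce the claim to the case $\beta = h(v)$ and $\beta = y(b)$ separately, since these generate $\mathbb{B}$, and then check multiplicativity of the asserted formula in $\beta$. First I would record the two base cases. For $\beta = h(v)$ with $v \in A^\times$: here $u(\beta) = v$ and $a\cdot\beta = av^2$, so the claim reads $\epsilon(av^2) = h(v)\epsilon(a)h(v)$, which is exactly relation (3). For $\beta = y(b)$ with $b\in A$: here $u(\beta) = 1$ and $a\cdot\beta = a + b$, so the claim reads $\epsilon(a+b) = \epsilon(a)y(b) = \epsilon(a)h(-1)\epsilon(0)\epsilon(b)$. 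Using that $h(-1)$ is central with $h(-1)^2=1$ and that $\epsilon(0)^2 = h(-1)$, the right side is $h(-1)\epsilon(a)\epsilon(0)\epsilon(b)$, and by relation (2) $\epsilon(a)\epsilon(0)\epsilon(b) = h(-1)\epsilon(a+b)$, giving $h(-1)^2\epsilon(a+b) = \epsilon(a+b)$ as needed.

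Next I would verify that the set of $\beta$ for which the identity holds is closed under multiplication, so that it suffices to know it for the generators. Suppose $\epsilon(a\cdot\beta_1) = h(u(\beta_1))\epsilon(a)\beta_1$ holds for all $a$ and similarly for $\beta_2$. Then, writing $u_i = u(\beta_i)$ and using that the action of $\mathbb{B}$ on $A$ is a right action together with $u(\beta_1\beta_2) = u_1 u_2$ (Corollary \ref{cor:b}, since $\beta \mapsto u(\beta)$ is a homomorphism), one computes
\[
\epsilon(a\cdot(\beta_1\beta_2)) = \epsilon((a\cdot\beta_1)\cdot\beta_2) = h(u_2)\epsilon(a\cdot\beta_1)\beta_2 = h(u_2)h(u_1)\epsilon(a)\beta_1\beta_2 = h(u_1u_2)\epsilon(a)\beta_1\beta_2,
\]
using relation (1) for $h(u_2)h(u_1) = h(u_1u_2)$. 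Since every element of $\mathbb{B}$ is a product of elements of the form $h(v)$ and $y(b)$ (indeed $\beta(u,a) = h(u)y(ua)$ by definition), this multiplicativity together with the two base cases establishes the lemma for all $\beta \in \mathbb{B}$.

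I do not expect a serious obstacle here; the only thing to be careful about is the bookkeeping in the induction step — specifically that the formula $a\cdot\beta = au^2 + bu$ for $\beta = \beta(u,b)$ genuinely defines a \emph{right} action of $\mathbb{B}$ on $A$ (which follows from $\psi$ being a homomorphism and $B$ acting on $Y_0(A)$ on the right), and that one consistently tracks $u(\beta)$ through products. A small alternative that avoids even the induction: since $\beta(u,b) = h(u)y(ub)$, one can directly chain the two base cases, applying first the $y$-case to $a$ and then the $h$-case, and read off $\epsilon(a\cdot\beta(u,b)) = \epsilon((a+ub)\,\cdot\, \text{?})$ — but the cleanest exposition is the multiplicativity argument above, so that is what I would write up.
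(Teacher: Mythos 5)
Your proof is correct and rests on the same ingredients as the paper's: the decomposition $\beta(u,b)=h(u)y(ub)$ and relations (2) and (3). The paper simply performs the computation directly --- it evaluates $\epsilon(a)h(u)y(ub)$ by applying relation (3) to move $h(u)$ past $\epsilon(a)$ and then relation (2) to absorb $\epsilon(0)\epsilon(ub)$ --- which is precisely the ``chain the two base cases'' alternative you sketch at the end, except that since $h(u)$ is the \emph{left} factor of $\beta(u,b)$ the right action applies the $h$-case first (giving $au^2$) and then the $y$-case (giving $au^2+ub$), not the other way around as your aside suggests; your main multiplicativity argument is unaffected by this and is a perfectly valid, slightly more structural packaging of the same computation.
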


\begin{proof} Let $u=u(\beta)$. So $\beta=\beta(u,b)=h(u)y(ub)$ for some $b\in A$. Thus
\begin{eqnarray*}
\epsilon(a)\beta=\epsilon(a)h(u)y(ub)\\
&=& h(u^{-1})\epsilon(u^2a)y(ub) \mbox{ by (3)}\\
&=& h(u^{-1})\epsilon(u^2a)h(-1)\epsilon(0)\epsilon(ub)\\
&=& h(u^{-1})\epsilon(u^2a+ub) \mbox{ by (2)}\\
&=& h(u)^{-1}\epsilon(a\cdot\beta)
\end{eqnarray*}
as required.
\end{proof}

Applying the map $\psi$ (or, by direct calculation) we have
\begin{cor}\label{cor:key1}
Let $a\in A$, $X\in B $. Then $a\cdot X\in A$ and 
\[
E(a\cdot X)=D(u(X))E(a)X\mbox{ in } E_2(A),
\]
where the unit $u(X)$ is the $(1,1)$-entry of $X$. 
\end{cor}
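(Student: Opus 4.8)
The plan is to deduce this immediately from Lemma~\ref{lem:key} by applying the homomorphism $\psi\colon C(A)\to\spl{2}{A}$, using that $\psi$ restricts to an isomorphism $\mathbb{B}\to B$ (Lemma~\ref{lem:b}). First I would set up the dictionary between the two sides. Given $X\in B$, Lemma~\ref{lem:b} produces a unique $\beta\in\mathbb{B}$ with $\psi(\beta)=X$; writing $\beta=\beta(u,b)$ with $u=u(\beta)$, we have
\[
\psi(\beta)=D(u)E_{2,1}(ub)=\left[\begin{array}{cc}u&0\\ b&u^{-1}\end{array}\right],
\]
so $u$ is precisely the $(1,1)$-entry of $X$; this is exactly the asserted identity $u(X)=u(\beta)$. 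Moreover, by the definition of the $\mathbb{B}$-action on $A$ recalled just before Lemma~\ref{lem:key}, $a\cdot X=a\cdot\psi(\beta)=a\cdot\beta=au^2+bu$, which in particular lies in $A$, proving the first assertion.

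Next I would apply $\psi$ to the identity $\epsilon(a\cdot\beta)=h(u(\beta))\,\epsilon(a)\,\beta$ of Lemma~\ref{lem:key}. Since $\psi$ is a group homomorphism with $\psi(\epsilon(c))=E(c)$ for all $c\in A$ and $\psi(h(u))=D(u)$ for all $u\in A^\times$, the left-hand side becomes $E(a\cdot X)$ and the right-hand side becomes $D(u(X))E(a)X$, giving the claimed equation in $E_2(A)$.

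There is no real obstacle here: all the content has already been extracted in Lemma~\ref{lem:key} and Lemma~\ref{lem:b}, and this corollary is just the image of Lemma~\ref{lem:key} under $\psi$. The only point requiring a moment's care is the bookkeeping above, identifying the unit $u(\beta)$ attached to $\beta$ with the $(1,1)$-entry of $X=\psi(\beta)$. Alternatively --- this is the ``direct calculation'' alluded to --- one can avoid $C(A)$ altogether: every $X\in B$ has the form $\left[\begin{array}{cc}u&0\\ b&u^{-1}\end{array}\right]$ with $u\in A^\times$, $b\in A$; the row computation $(a,1)X=(au+b,\,u^{-1})$ gives $[a,1]\cdot X=[au^2+bu,\,1]$, hence $a\cdot X=au^2+bu\in A$; and multiplying the three matrices out shows $D(u)E(a)X=\left[\begin{array}{cc}au^2+bu&1\\ -1&0\end{array}\right]=E(au^2+bu)=E(a\cdot X)$.
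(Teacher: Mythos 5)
Your proposal is correct and is exactly the paper's argument: the corollary is stated there as the image of Lemma \ref{lem:key} under $\psi$ ("or, by direct calculation"), and both the $\psi$-route and the explicit matrix verification you give match. The bookkeeping identifying $u(\beta)$ with the $(1,1)$-entry of $X=\psi(\beta)$ is the right small point to check, and you check it correctly.
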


\begin{prop}\label{prop:key}
Let $a_1,\ldots,a_n\in A$. Let $\beta\in \mathbb{B}\subset C(A)$ and let $u=u(\beta)$. Then there exist unique $b_1,\ldots,b_n\in A$ satisfying 
\[
\epsilon(b_i)\cdots\epsilon(b_1)=h(u)^{(-1)^{i-1}}\epsilon(a_i)\cdots\epsilon(a_1)\beta \mbox{ in }C(A) \mbox{ for all }i.
\]
Furthermore, we have 
\[
b_1=a_1\cdot \beta\mbox{ and }b_i=u^{(-1)^{i-1}2}a_i\mbox{ for all }i\geq 2.
\]
\end{prop}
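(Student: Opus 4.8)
The plan is to prove the statement by induction on $i$, carrying along the explicit formulas for the $b_i$ as part of the inductive hypothesis. The two relations to be tracked in tandem are the ``conjugation-by-$h(u)$-with-alternating-sign'' identity and the closed form $b_1 = a_1\cdot\beta$, $b_i = u^{(-1)^{i-1}2}a_i$ for $i\geq 2$. The base case $i=1$ is exactly Lemma \ref{lem:key}: it gives $\epsilon(a_1\cdot\beta) = h(u)\epsilon(a_1)\beta$, so setting $b_1 := a_1\cdot\beta$ we have $\epsilon(b_1) = h(u)^{(-1)^0}\epsilon(a_1)\beta$ as required, and $b_1$ is the unique element of $A$ for which this holds because $\psi$ restricted to $\mathbb{B}$ is injective (Lemma \ref{lem:b}) and $\epsilon$ is injective on $A$ into $C(A)$ — more directly, $b_1$ is determined by the equation since $\epsilon$ is injective (which follows because $\psi\circ\epsilon$ is the injection $a\mapsto E(a)$).

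For the inductive step, suppose $b_1,\ldots,b_{i-1}$ have been constructed with
\[
\epsilon(b_{i-1})\cdots\epsilon(b_1) = h(u)^{(-1)^{i-2}}\epsilon(a_{i-1})\cdots\epsilon(a_1)\beta.
\]
Left-multiply both sides by $\epsilon(a_i)$ and then massage the right-hand side: we want to move the factor $h(u)^{(-1)^{i-2}}$ past $\epsilon(a_i)$ to the far left. Relation (3), in the form $\epsilon(a)h(v) = h(v)^{-1}\epsilon(v^2 a)$ equivalently $h(v)\epsilon(a) = \epsilon(v^{-2}a)h(v)$ (using $h(v)^{-1} = h(v^{-1})$ and $h(v^{-1})\epsilon(a)h(v^{-1}) = \epsilon(v^{-2}a)$ from Lemma \ref{lem:ya}-type manipulations, or directly relation (3)), lets us slide $h(u)^{\pm1}$ through a single $\epsilon$ at the cost of rescaling the argument by $u^{\mp2}$ and flipping the exponent sign. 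Concretely, $\epsilon(a_i)h(u)^{(-1)^{i-2}} = h(u)^{(-1)^{i-1}}\epsilon(u^{(-1)^{i-1}2}a_i)$. Thus
\[
\epsilon(a_i)\epsilon(b_{i-1})\cdots\epsilon(b_1) = h(u)^{(-1)^{i-1}}\epsilon(u^{(-1)^{i-1}2}a_i)\epsilon(a_{i-1})\cdots\epsilon(a_1)\beta,
\]
so defining $b_i := u^{(-1)^{i-1}2}a_i$ gives exactly the desired identity at level $i$, and the formula for $b_i$ matches the claim. Uniqueness of $b_i$ at each stage again follows from injectivity of $\epsilon$ on $A$ (equivalently, apply $\psi$ and use that $E(b)$ determines $b$).

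The only real subtlety — and the step I expect to need the most care — is getting the exponent bookkeeping on the powers of $h(u)$ exactly right, since it is easy to be off by a sign in $(-1)^{i-1}$ versus $(-1)^{i-2} = (-1)^i$. The key algebraic fact to isolate cleanly at the outset is the commutation rule $\epsilon(a)\,h(u)^{\varepsilon} = h(u)^{-\varepsilon}\,\epsilon(u^{-\varepsilon 2}a)$ for $\varepsilon\in\{+1,-1\}$ (for $\varepsilon = -1$ this is relation (3) itself; for $\varepsilon = +1$ conjugate relation (3) by $h(u)$ and use relation (1)). Once this is stated, the induction is a one-line substitution at each step, and since $h(-1)$ is central with $h(-1)^2 = 1$ there are no hidden commutativity obstructions. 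Everything takes place inside $C(A)$, so no appeal to $\psi$ is needed except to note injectivity of $\epsilon$ for the uniqueness clause.
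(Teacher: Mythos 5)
Your overall strategy --- induction with Lemma \ref{lem:key} as the base case, relation (3) powering the inductive step, and uniqueness via injectivity of $b\mapsto\epsilon(b)$ --- is exactly the paper's. But the inductive step as written does not go through, for two reasons that happen to cancel. First, your ``key algebraic fact'' has the wrong sign: relation (3) gives $\epsilon(a)h(u)=h(u)^{-1}\epsilon(u^{2}a)$, and replacing $u$ by $u^{-1}$ (using $h(u^{-1})=h(u)^{-1}$ from relation (1)) gives $\epsilon(a)h(u)^{-1}=h(u)\epsilon(u^{-2}a)$; so the correct commutation rule is $\epsilon(a)h(u)^{\varepsilon}=h(u)^{-\varepsilon}\epsilon(u^{\varepsilon 2}a)$, not $h(u)^{-\varepsilon}\epsilon(u^{-\varepsilon 2}a)$. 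In particular your concrete step should read $\epsilon(a_i)h(u)^{(-1)^{i-2}}=h(u)^{(-1)^{i-1}}\epsilon(u^{(-1)^{i}2}a_i)$, so your displayed identity is false as stated (unless $u^4=1$). Second, even granting your display, it is not ``exactly the desired identity at level $i$'': you have derived an identity of the form
\[
\epsilon(a_i)\epsilon(b_{i-1})\cdots\epsilon(b_1)=h(u)^{(-1)^{i-1}}\epsilon(c_i)\epsilon(a_{i-1})\cdots\epsilon(a_1)\beta
\]
with the \emph{unrescaled} $a_i$ on the left-hand side and the rescaled element $c_i$ on the right, whereas the proposition requires $\epsilon(b_i)$ on the left and $\epsilon(a_i)$ on the right. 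Converting one into the other means substituting for $a_i$ the inverse rescaling, which flips the exponent from $(-1)^{i-1}$ to $(-1)^{i}$. Starting from your (sign-flipped) rule, this second flip lands you back on the correct formula $b_i=u^{(-1)^{i-1}2}a_i$ --- the two errors cancel --- but the argument as written asserts a false intermediate identity and then an inference that does not follow from it.

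The clean repair, which is what the paper does, is to run the induction in the verification direction: define $b_i:=u^{(-1)^{i-1}2}a_i$ up front, left-multiply the inductive hypothesis by $\epsilon(b_i)$, insert $1=h(u)^{(-1)^{i-1}}h(u)^{(-1)^{i-2}}$ in front, and apply relation (3) as a conjugation,
\[
h(u)^{(-1)^{i-2}}\,\epsilon\!\left(u^{(-1)^{i-1}2}a_i\right)h(u)^{(-1)^{i-2}}=\epsilon(a_i),
\]
using $h(u)^{(-1)^{i-2}}=h\!\left(u^{(-1)^{i-2}}\right)$. Then no inversion of the rescaling is needed and the bookkeeping is forced.
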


\begin{proof} The uniqueness part of the statement follows from the observation that $\epsilon(b)=\epsilon(b')$ in $C(A)$ implies $b=b'$ in $A$. (Apply the homomorphism $\psi$ 
and this follows from the corresponding statement for the elements $E(b)$ in $\spl{2}{A}$.) Thus it is enough to verify that the elements given in the final line of the statement of the theorem satisfy the relevant identities in $C(A)$.

 We will proceed by induction on $n$. The case $n=1$ is just Lemma \ref{lem:key}. Suppose the result is known for a given $n\geq 1$ and that $a_1,\ldots,a_n,a_{n+1}\in A$ are given.
Let $b_1,\ldots,b_{n+1}$ be the elements defined in the final line of the theorem. Then
\begin{eqnarray*}
\epsilon(b_{n+1})\epsilon(b_n)\cdots \epsilon(b_n)&=&\epsilon(u^{(-1)^n2}a_{n+1})h(u)^{(-1)^{n-1}}\epsilon(a_n)\cdots\epsilon(a_1)\beta\\
&=& h(u)^{(-1)^n}\left(h(u)^{(-1)^{n-1}}\epsilon(u^{(-1)^n2}a_{n+1})h(u)^{(-1)^{n-1}}\right)\epsilon(a_n)\cdots\epsilon(a_1)\beta\\
&=& h(u)^{(-1)^n}\epsilon(a_{n+1})\epsilon(a_n)\cdots\epsilon(a_1)\beta\\
\end{eqnarray*}
as required, using defining relation (3) of $C(A)$. 
\end{proof}

Applying the homomorphism $\psi$ we deduce the corresponding statement for the group $\spl{2}{A}$:
\begin{cor}\label{cor:key2}
Let $a_1,\ldots,a_n\in A$. Let $Z\in B\subset \spl{2}{A}$ and let $u=u(Z)$. Then there exist unique $b_1,\ldots,b_n\in A$ satisfying 
\[
E(b_i)\cdots E(b_1)=D(u)^{(-1)^{i-1}}E(a_i)\cdots E(a_1)Z \mbox{ in }\spl{2}{A}\mbox{ for all }i.
\]
Furthermore, we have 
\[
b_1=a_1\cdot Z\mbox{ and }b_i=u^{(-1)^{i-1}2}a_i\mbox{ for all }i\geq 2.
\]
\end{cor}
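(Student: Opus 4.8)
The plan is to obtain Corollary \ref{cor:key2} as a direct consequence of Proposition \ref{prop:key} by pushing that statement forward along the homomorphism $\psi: C(A)\to\spl{2}{A}$, using the isomorphism $\psi:\mathbb{B}\to B$ of Lemma \ref{lem:b}.

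For existence, I would first lift $Z$ to $C(A)$: by Lemma \ref{lem:b} there is a unique $\beta\in\mathbb{B}$ with $\psi(\beta)=Z$, and its associated unit $u(\beta)$ is by construction the $(1,1)$-entry of $\psi(\beta)=Z$, so $u(\beta)=u(Z)=u$. By the definition of the $\mathbb{B}$-action on $A$ we also have $a_1\cdot\beta=a_1\cdot\psi(\beta)=a_1\cdot Z$. Applying Proposition \ref{prop:key} to $a_1,\ldots,a_n$ and this $\beta$ yields $b_1,\ldots,b_n\in A$ with
\[
\epsilon(b_i)\cdots\epsilon(b_1)=h(u)^{(-1)^{i-1}}\epsilon(a_i)\cdots\epsilon(a_1)\beta\quad\mbox{in }C(A)\mbox{ for all }i,
\]
where $b_1=a_1\cdot\beta=a_1\cdot Z$ and $b_i=u^{(-1)^{i-1}2}a_i$ for $i\geq 2$. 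Since $\psi$ is a homomorphism sending $\epsilon(a)$ to $E(a)$, $h(u)$ to $D(u)$, and $\beta$ to $Z$, applying $\psi$ to this identity gives exactly
\[
E(b_i)\cdots E(b_1)=D(u)^{(-1)^{i-1}}E(a_i)\cdots E(a_1)Z\quad\mbox{in }\spl{2}{A}\mbox{ for all }i,
\]
with the asserted values of the $b_i$.

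For uniqueness I would argue directly inside $\spl{2}{A}$, using only that $E(b)=E(b')$ there forces $b=b'$ (compare $(1,1)$-entries). If $(b_i)_{i=1}^n$ and $(b_i')_{i=1}^n$ both satisfy the displayed family of identities, then for $i=1$ we get $E(b_1)=D(u)E(a_1)Z=E(b_1')$, so $b_1=b_1'$; and for $i\geq 2$, the identity for index $i-1$ shows $E(b_{i-1})\cdots E(b_1)=E(b_{i-1}')\cdots E(b_1')$, whence cancelling this from the identity for index $i$ gives $E(b_i)=E(b_i')$, so $b_i=b_i'$.

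I do not expect any genuine obstacle here: once Proposition \ref{prop:key} and Lemma \ref{lem:b} are in hand the argument is purely formal, and in fact the $\spl{2}{A}$-version of uniqueness is precisely the fact that was invoked to prove uniqueness in Proposition \ref{prop:key} in the first place. The only point I would be careful to make explicit is that uniqueness here is verified internally to $\spl{2}{A}$, and not transported back through $\psi$, since $\psi$ is in general not injective (its kernel $U(A)$ being the very group this paper studies). One could equally well bypass $C(A)$ and reprove the corollary by the same induction as in Proposition \ref{prop:key}, taking Corollary \ref{cor:key1} as the base case and using the matrix identity $D(u)E(a)D(u)=E(u^2a)$ --- one of Cohn's relations, checked by direct multiplication --- in the inductive step.
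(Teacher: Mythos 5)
Your proposal is correct and follows the paper's route exactly: the paper obtains Corollary \ref{cor:key2} precisely by applying $\psi$ to Proposition \ref{prop:key} (using the identification $\psi:\mathbb{B}\to B$ of Lemma \ref{lem:b}), and your uniqueness argument inside $\spl{2}{A}$ is the same fact the paper itself invoked to prove uniqueness in Proposition \ref{prop:key}. Your care in not transporting uniqueness back through the non-injective $\psi$ is well placed but does not constitute a departure from the paper's argument.
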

\section{The Edge-Path Groupoid of the simplicial complex $Y(A)$}
\subsection{The edge-path groupoid}
We begin by reviewing the notion of the \emph{edge-path groupoid} of a simplicial complex $Y$  (Spanier \cite[Chapter 4]{spanier}. A path $\mathbf{p}$ in $Y$ is an $(n+1)$-tuple $(x_0,x_1,\ldots, x_n)$ for some 
$n\geq 0$, where the $x_i$ are vertices (or $0$-simplices) and for $i=0,n-1$, each  $\{ x_i,x_{i+1}\}$ is and edge (or $1$-simplex) of $Y$.   If $\mathbf{p}=(x_0,\ldots, x_n)$ is a path, we 
set $i(\mathbf{p}):=x_0$, the initial point of the path,  and $t(\mathbf{p})=x_n$, the terminal point. 

If $\mathbf{p}=(x_o,\ldots,x_n)$ and $\mathbf{q}=(y_0,\ldots,y_m)$ are two paths satisfying $t(\mathbf{p})=i(\mathbf{q})$ (i.e., $x_n=y_0$) then we can form the \emph{concatenation}
$\mathbf{p}\star\mathbf{q}:=(x_0,\ldots,x_n,y_1,\ldots, y_m)$.

 We consider the equivalence relation generated by the following two relations (which we will refer to as the \emph{homotopy relations}):
\begin{enumerate}
\item $(x_0,\ldots, x_i,x_{i+1},x_{i+2},\ldots,x_n) \sim (x_0,\ldots,x_i,\ldots,x_n)$ for $0\leq i\leq n-2$ whenever $x_i=x_{i+2}$
\item $(x_0,\ldots, x_i,x_{i+1},x_{i+2},\ldots,x_n)\sim (x_0,\ldots,x_i,x_{i+2},\ldots, x_n)$ for $0\leq i\leq n-2$ whenever $\{ x_i,x_{i+1},x_{i+2}\}$ is a $2$-simplex of $Y$.
\end{enumerate}
We denote the equivalence path of $\mathbf{p}=(x_0,\ldots,x_n)$ by $[\mathbf{p}]=[x_0,\ldots,x_n]$. Note that this equivalence relation preserves initial and terminal points of paths and that
 concatenation of equivalence classes is well-defined by $[\mathbf{p}]\star[\mathbf{q}]=[\mathbf{p}\star\mathbf{q}]$ is $t(\mathbf{p})=i(\mathbf{q})$.

\subsection{Algebraic representation of paths and concatenation}
Let now $A$ be ring. We have already noted, in Lemma \ref{lem:path} above, that a path $(x_0,\ldots, x_n)$ 
 in $\Gamma(A)$ (or, equivalently, in $Y(A)$) is determined by an element $X\in \spl{2}{A}$ and a sequence $a_1,\ldots, a_n$ of $A$, uniquely determined by $\mathbf{p}$ and the choice of $X$ such that $\infty\cdot X=x_0$. In this case,  we have $x_i=\infty \cdot E(a_i)\cdots E(a_1)X$ for each $i\geq 1$. 

Since the stabilizer of $\infty$ in $\spl{2}{A}$ is the group $B$ of lower-triangular matrices, $X$ is determined up to left multiplication by an element of $B$.
 Corollary \ref{cor:key2} allows us to  track the effect that changing the choice of $X$ has on the sequence of elements $a_1,\ldots, a_n$. It immediately implies:

\begin{prop}\label{prop:changeX} Let $\mathbf{p}=(x_0,\ldots,x_n)$  be a path in $Y(A)$. Let $X\in \spl{2}{A}$ such that $\infty\cdot X=x_0$. Let $Y\in \spl{2}{A}$ satisfy
$Z:=X\cdot Y^{-1}\in B$. Let $a_1,\ldots,a_n$ be the elements associated to $X$ specifying the path $\mathbf{p}$  and let $b_1,\ldots, b_n$ be the corresponding elements associated to 
$Y$. Let $u=u(Z)$. Then
\[
b_1=a_1\cdot Z \mbox{ and }b_i=u^{(-1)^{i-1}2}a_i\mbox{ for all }i\geq 2.
\]
\end{prop}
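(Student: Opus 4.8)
The plan is to reduce the statement entirely to Corollary \ref{cor:key2}, using the uniqueness clause of Lemma \ref{lem:path} only to identify the resulting elements. First I would observe that $Y$ is itself an admissible base matrix for the path $\mathbf{p}$: writing $X = ZY$ with $Z \in B$ and using that $B$ stabilizes $\infty$, we get $\infty \cdot Y = \infty \cdot ZY = \infty \cdot X = x_0$. Hence Lemma \ref{lem:path} does produce a well-defined sequence $b_1, \ldots, b_n \in A$ attached to $Y$, characterized uniquely by $x_i = \infty \cdot E(b_i) \cdots E(b_1) Y$ for $1 \le i \le n$.

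Next I would apply Corollary \ref{cor:key2} to the sequence $a_1, \ldots, a_n$ and the matrix $Z \in B$, with $u = u(Z)$. This yields elements $b_1', \ldots, b_n' \in A$ with $E(b_i') \cdots E(b_1') = D(u)^{(-1)^{i-1}} E(a_i) \cdots E(a_1) Z$ in $\spl{2}{A}$, together with the explicit formulas $b_1' = a_1 \cdot Z$ and $b_i' = u^{(-1)^{i-1}2} a_i$ for $i \ge 2$. Right-multiplying this identity by $Y$, acting on $\infty$, and using that the diagonal matrix $D(u)^{\pm 1}$ fixes $\infty$, I obtain
\[
\infty \cdot E(b_i') \cdots E(b_1') Y = \infty \cdot E(a_i) \cdots E(a_1) Z Y = \infty \cdot E(a_i) \cdots E(a_1) X = x_i
\]
for every $i$. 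By the uniqueness in Lemma \ref{lem:path} applied with base matrix $Y$, the sequence $b_1', \ldots, b_n'$ must coincide with $b_1, \ldots, b_n$, which gives the claimed formulas $b_1 = a_1 \cdot Z$ and $b_i = u^{(-1)^{i-1}2} a_i$ for $i \ge 2$.

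I do not anticipate a genuine obstacle here: all the substantive computation has already been packaged inside Corollary \ref{cor:key2} (itself obtained from Proposition \ref{prop:key} by applying $\psi$). The only things requiring care are the bookkeeping of which matrix serves as the base point of the path, and the repeated — but harmless — use of the fact that diagonal matrices, and more generally elements of $B$, fix the vertex $\infty$.
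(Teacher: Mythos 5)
Your proposal is correct and follows essentially the same route as the paper: the paper's proof likewise applies Corollary \ref{cor:key2} to produce the identity $E(b_i)\cdots E(b_1)=D(u)^{(-1)^{i-1}}E(a_i)\cdots E(a_1)Z$ and then right-multiplies by $Y$ to recover the data attached to the base matrix $Y$. You merely make explicit two steps the paper leaves implicit (that $Y$ is an admissible base matrix and that uniqueness in Lemma \ref{lem:path} identifies the $b_i'$ with the $b_i$), which is harmless.
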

\begin{proof}
By Corollary \ref{cor:key2} we have 
\[
E(b_i)\cdots E(b_1)=D(u)^{(-1)^{i-1}}E(a_i)\cdots E(a_1)Z\mbox{ in }\spl{2}{A}\mbox{ for all }i.
\]
and hence
\[
E(b_i)\cdots E(b_1)Y=D(u)^{(-1)^{i-1}}E(a_i)\cdots E(a_1)X \mbox{ in }\spl{2}{A}\mbox{ for all }i.
\]
\end{proof}

Thus we have the following algebraic description of paths in $Y(A)$, for any  ring $A$: 

Let $M_n:= \spl{2}{A}\times A^n$.  Given $p=(X,a_1,\ldots,a_n)\in M_n$ define 
\[
I(p):=T_0(p):=X \mbox{ and }  T_i(p):= E(a_i)\cdots E(a_1)X\in \spl{2}{A} \mbox{ for } i\geq 1
\]
and let $T(p):=T_n(p)$.  We define a relation $\sim$ on each of the sets $M_n$ as follows: 
\[
p=(X,a_1,\ldots,a_n)\sim (Y,b_1,\ldots,b_n)=q
\]
if and only if $Z:=X\cdot Y^{-1}=I(p)I(q)^{-1}\in B$ and 
\[
b_1=a_1\cdot Z\mbox{ and }b_i=u^{(-1)^{i-1}2}a_i\mbox{ for all }i\geq 2
\]
where $u=u(Z)$. It follows from Corollary \ref{cor:key2} that  $p\sim q$ if and only if  there exists $Z\in B$ such that 
\[
T_0(q)=Z^{-1}T_0(p)\mbox{ and } T_i(q)=D(u(Z))^{(-1)^{i-1}}T_i(p) \mbox{ for } i\geq 1.
\]

Proposition \ref{prop:changeX} thus shows that $\sim$ is an equivalence relation on $M_n$ for each $n$  and that $\mathcal{M}_n:=M_n/\sim$ is in bijective correspondence with 
the set of paths of length $n$ in $Y(A)$.
 If $p=(X,a_1,\ldots,a_n)\in M_n$ we will denote its image in $\mathcal{M}_n$ by $ [X,(a_1,\ldots,a_n)]$.
When $n=0$, $M_0=\spl{2}{A}$ and $\mathcal{M}_0=B\backslash\spl{2}{A}$. Note that $i([p]):= [I(p)], t([p]):=[T(p)]\in B\backslash\spl{2}{A}=\mathcal{M}_0$ are well-defined.

Thus the paths of $Y(A)$ are naturally in bijective correspondence with the elements of $\mathcal{M}:=\cup_{n=0}^\infty \mathcal{M}_n$: The path $\mathbf{p}=(x_0,\ldots,x_n)$ corresponds to the class $[X, (a_1,\ldots, a_n)]\in \mathcal{M}_n$ where $\infty\cdot X=x_0$ and for all $i\geq 1$,  $x_i=\infty E(a_i)\cdots E(a_1)X$, or equivalently,
\[
a_i=\infty\cdot E(a_i)=x_i\cdot X^{-1}E(a_1)^{-1}\cdots E(a_{i-1})^{-1}\mbox{ for }i\geq 1.
\]
Conversely, the class $[p]=[X,(a_1,\ldots,a_n)]$ corresponds to the path $(x_0,\ldots,x_n)$ where $x_i=\infty\cdot T_i(p)$ for all $i$.

 We now give an algebraic description of the operation of concatenating paths. We lift this operation to  the level of the sets $M_n$ as follows:

 Given $p=(X, a_1,\ldots, a_n)\in M_n, q=(Y,b_1,\ldots,b_m)\in M_m$, we define 
\[
Z_{p,q}:=I(q)T(p)^{-1}=Y(E(a_n)\cdots E(a_1)X)^{-1} = Y\cdot X^{-1}E(a_1)^{-1}\cdots E(a_n)^{-1}
\]
in $\spl{2}{A}$. (We will refer to this matrix below as the \emph{connection matrix for $p$ and $q$}.) Then $p\star q\in M_{n+m}$ is defined if $Z=Z_{p,q}\in B$ and is given by
\[
p\star q:= (X,a_1,\ldots,a_n, b'_1,\ldots,b'_m)
\]
where
\[
b'_1:=b_1\cdot Z\mbox{ and } b'_j:=u^{(-1)^{j-1}2}b_j\mbox{ for } 1\leq j\leq m
\]
where $u:=u_{p,q}=u(Z_{p,q})$ in this case. Observe that, by Corollary \ref{cor:key2}, the $b'_j$ are entirely determined by the requirement that 
\[
E(b'_j)\cdots E(b'_1)T(p)=D(u)^{(-1)^{j-1}}E(b_j)\cdots E(b_1)X'\mbox{ for } 1\leq j\leq m;
\]
i.e., 
\[
T_{j+n}(p\star q)=D(u_{p,q})^{(-1)^{j-1}}T_j(q) \mbox{ for } 1\leq j\leq m.
\]
In particular, taking $j=m$,  $T(p\star q)=D(u_{p,q})^{(-1)^{m-1}}T(q)$.

From the definition of  the operation $\star$, we have:
\begin{lem}\label{lem:dec}
Given any $p=(X,a_1,\ldots, n)\in M_n$ with $n>1$ and  given $i$ with $1<i<n$
\[
p=(X,a_1,\ldots,a_n)=(X,a_1,\ldots,a_{i-1})\star(T_{i-1}(p),a_i,\ldots,a_n).
\]
\end{lem}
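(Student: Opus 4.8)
The statement to prove is Lemma \ref{lem:dec}: given $p=(X,a_1,\ldots,a_n)\in M_n$ with $n>1$ and $1<i<n$, we have the decomposition $p=(X,a_1,\ldots,a_{i-1})\star(T_{i-1}(p),a_i,\ldots,a_n)$.

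The plan is to unwind the definition of $\star$ applied to the two pieces $p_1:=(X,a_1,\ldots,a_{i-1})\in M_{i-1}$ and $p_2:=(T_{i-1}(p),a_i,\ldots,a_n)\in M_{n-i+1}$, and check that the connection matrix is the identity, so that the recipe for $\star$ leaves all the entries $a_i,\ldots,a_n$ untouched. Concretely, first I would compute $T(p_1)=E(a_{i-1})\cdots E(a_1)X=T_{i-1}(p)$ directly from the definition of $T$, and note that $I(p_2)=T_{i-1}(p)$ by construction of $p_2$. Therefore the connection matrix is
\[
Z_{p_1,p_2}=I(p_2)\,T(p_1)^{-1}=T_{i-1}(p)\,T_{i-1}(p)^{-1}=\mathrm{Id},
\]
which lies in $B$, so the concatenation $p_1\star p_2$ is indeed defined.

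Next I would observe that since $Z=\mathrm{Id}$, its $(1,1)$-entry is $u(Z)=1$, so in the formula defining $p_1\star p_2$ the new entries are $b'_1=a_i\cdot\mathrm{Id}=a_i$ and $b'_j=1^{(-1)^{j-1}2}a_{i+j-1}=a_{i+j-1}$ for all $j$. (Here one uses that the action of $\mathrm{Id}\in B$ on $A$ is trivial, which follows from the explicit formula $a\cdot\beta(u,b)=au^2+bu$ with $u=1$, $b=0$.) Hence
\[
p_1\star p_2=(X,a_1,\ldots,a_{i-1},a_i,\ldots,a_n)=p,
\]
which is exactly the claimed identity.

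There is essentially no obstacle here: the lemma is a direct bookkeeping check that the connection matrix for the two obvious pieces of $p$ is the identity, whence $\star$ acts as literal juxtaposition. The only point requiring a moment's care is confirming that $T(p_1)=T_{i-1}(p)$ — i.e., that truncating the sequence and running $T$ on the shorter tuple gives the same matrix as the intermediate matrix $T_{i-1}$ of the original tuple — but this is immediate from the defining formula $T_k(\,\cdot\,)=E(a_k)\cdots E(a_1)(\text{base})$, since the first $i-1$ entries and the base matrix $X$ agree. One might also remark that the hypotheses $n>1$ and $1<i<n$ just ensure both pieces are genuine nonempty paths, so nothing degenerates.
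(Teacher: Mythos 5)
Your proof is correct and matches the paper, which offers no separate argument but simply notes that the lemma follows from the definition of $\star$: the verification is exactly your computation that $Z_{p_1,p_2}=T_{i-1}(p)T_{i-1}(p)^{-1}=\mathrm{Id}\in B$ with $u=1$, so the concatenation formula reduces to juxtaposition.
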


By induction on $n$ we deduce immediately:
\begin{cor}\label{cor:dec}
For any $n\geq 1$ and any $p=(X,a_1,\ldots, n)\in M_n$ we have 
\[
p=\bigstar_{i=1}^n (T_{i-1}(p), a_i)
\]
\end{cor}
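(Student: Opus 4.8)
The plan is to prove Corollary \ref{cor:dec} by induction on $n$, using Lemma \ref{lem:dec} (which is the case of a single ``split'') as the engine. First I would dispose of the base case: when $n=1$, the claimed identity reads $p=(T_0(p),a_1)=(X,a_1)$, which is immediate since $T_0(p)=I(p)=X$ by definition.

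For the inductive step, suppose the formula holds for all elements of $M_{n-1}$ and let $p=(X,a_1,\ldots,a_n)\in M_n$ with $n\geq 2$. Applying Lemma \ref{lem:dec} with $i=n-1$ (so that $1\leq i<n$; note the case $n=2$ needs the boundary $i=1$, where $(X)\star(T_0(p),a_1,\ldots,a_n)$ still makes sense as $p$ itself since the connection matrix is the identity), we get
\[
p=(X,a_1,\ldots,a_{n-1})\star (T_{n-1}(p),a_n).
\]
Set $p':=(X,a_1,\ldots,a_{n-1})\in M_{n-1}$; then $T_i(p')=T_i(p)$ for $0\leq i\leq n-1$, and in particular $T(p')=T_{n-1}(p)$. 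By the inductive hypothesis, $p'=\bigstar_{i=1}^{n-1}(T_{i-1}(p'),a_i)=\bigstar_{i=1}^{n-1}(T_{i-1}(p),a_i)$. Substituting, and using that $\star$ is associative on the level of the classes in $\mathcal{M}$ (concatenation of paths, hence of their algebraic representatives, is associative), we obtain
\[
p=\left(\bigstar_{i=1}^{n-1}(T_{i-1}(p),a_i)\right)\star (T_{n-1}(p),a_n)=\bigstar_{i=1}^n(T_{i-1}(p),a_i),
\]
as desired.

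The one point that needs care — and the closest thing to an obstacle — is making sure the iterated concatenation $\bigstar_{i=1}^n(T_{i-1}(p),a_i)$ is well-defined, i.e.\ that at each stage the connection matrix lies in $B$ so that $\star$ can be applied. This follows by a direct check: after forming the partial concatenation $\bigstar_{i=1}^{k}(T_{i-1}(p),a_i)$, one verifies inductively that its terminal matrix equals $T_k(p)$ up to a factor $D(u)^{\pm 1}$ (coming from the accumulated $u$-twists in the definition of $\star$), while the initial matrix of the next factor $(T_k(p),a_{k+1})$ is exactly $T_k(p)$; hence the connection matrix is a power of $D(u)$, which indeed lies in $B$. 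Since each factor $(T_{i-1}(p),a_i)$ already encodes the correct edge $x_{i-1},x_i$ of the path associated to $p$, the composite represents the same path as $p$, and by the bijection $\mathcal{M}_n\leftrightarrow\{\text{paths of length }n\}$ established via Proposition \ref{prop:changeX}, the two classes coincide. Alternatively, one avoids the explicit bookkeeping entirely by simply iterating Lemma \ref{lem:dec}: split off the last entry $n$ times in succession, each application being the already-verified single split.
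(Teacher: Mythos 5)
Your proof is correct and takes essentially the same route as the paper, which likewise deduces the corollary from Lemma \ref{lem:dec} by induction on $n$ (splitting off one entry at a time). One pedantic remark: the split $p=(X,a_1,\ldots,a_{n-1})\star(T_{n-1}(p),a_n)$ that you use is the boundary case $i=n$ of Lemma \ref{lem:dec} rather than $i=n-1$, and strictly speaking falls outside the lemma's stated range $1<i<n$; it nevertheless holds directly from the definition of $\star$ (the connection matrix is the identity), and the paper's own one-line deduction implicitly relies on the same boundary case.
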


If $Y\in M_0=\spl{2}{A}$ and if $p= (X,a_1,\ldots,a_n)\in M_n$ then $Y\star p$  is defined if and only if $Z= XY^{-1}\in B$ and in this case $Y\star p =(Y,b_1,\ldots,b_n)$ satisfies
$T_i(q)=D(u(Z))^{(-1)^{n-1}}T_i(p)$ for $i\geq 1$.  Note that we always have $Y=I(Y\star p)$ whenever this exists. We immediately deduce:
\begin{lem}\label{lem:equiv}
 Let $p,q\in M_n$. Then $p\sim q$ if and only if  $I(q)\star p$ exists and equals $q$. 
\end{lem}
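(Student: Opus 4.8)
The plan is to prove the two implications by directly unwinding the definitions of the relation $\sim$ and of the concatenation $I(q)\star p$, using Corollary~\ref{cor:key2} (equivalently Proposition~\ref{prop:changeX}) to pass between the two equivalent descriptions of $\sim$. Write $p=(X,a_1,\ldots,a_n)$ and $q=(Y,b_1,\ldots,b_n)$, and set $Z:=XY^{-1}=I(p)I(q)^{-1}$. The key point is that this single matrix $Z$ governs both conditions: $p\sim q$ says that $Z\in B$ together with $b_1=a_1\cdot Z$ and $b_i=u(Z)^{(-1)^{i-1}2}a_i$ for $i\ge 2$, whereas, viewing $I(q)$ as an element of $M_0=\spl{2}{A}$ (so that $I(I(q))=T(I(q))=I(q)$), the connection matrix for the pair $I(q),p$ is $Z_{I(q),p}=I(p)\,T(I(q))^{-1}=I(p)I(q)^{-1}=Z$. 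Thus the first step is just to record this identification, which already shows that $I(q)\star p$ is defined exactly when $Z\in B$.

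Second, I would compute $I(q)\star p$ whenever it is defined: by the defining formula for $\star$ applied with first operand the length-zero path $I(q)$, it equals $(I(q),a_1',\ldots,a_n')$ where $a_1'=a_1\cdot Z$ and $a_j'=u(Z)^{(-1)^{j-1}2}a_j$ for $2\le j\le n$. These are exactly the formulas occurring in the definition of $\sim$. Consequently: if $p\sim q$, then $Z\in B$, so $I(q)\star p$ is defined, its initial matrix is $I(q)$ (matching that of $q$), and its remaining coordinates $a_j'$ equal the $b_j$; hence $I(q)\star p=q$. Conversely, if $I(q)\star p$ is defined and equals $q$, then $Z\in B$, and comparing the coordinates of $q=I(q)\star p$ yields $b_1=a_1\cdot Z$ and $b_j=u(Z)^{(-1)^{j-1}2}a_j$ for $j\ge 2$, which is precisely the assertion $p\sim q$.

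Since the entire argument is bookkeeping, the only real obstacle is keeping the roles of the matrices straight: one must be careful that the connection matrix $Z_{I(q),p}$ is $I(p)I(q)^{-1}$ and not its inverse, and that the explicit coordinate description of $\sim$ and its reformulation in terms of the $T_i$'s from Corollary~\ref{cor:key2} are being matched against the same $Z$ (with $u=u(Z)$). Once this is pinned down, both implications follow immediately by reading off the relevant formulas.
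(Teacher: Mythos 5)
Your proof is correct and is essentially the paper's own argument: the paper likewise observes that for $Y\in M_0$ the concatenation $Y\star p$ is defined exactly when $XY^{-1}\in B$, that $I(Y\star p)=Y$, and that the transformed coordinates of $Y\star p$ are given by the same formulas that define $\sim$, so the lemma is immediate. Your identification of the connection matrix $Z_{I(q),p}=I(p)T(I(q))^{-1}=I(p)I(q)^{-1}$ with the matrix $Z$ in the definition of $\sim$ is exactly the point.
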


Furthermore, we observe, from the definitions, if $p\in M_n$ and $X\in \spl{2}{A}$, then $p\star X$ exists if and only if $XT(p)^{-1}\in B$ and, in this case, $p\star X=p$.

By Proposition \ref{prop:changeX} it follows that $\star$ induces a well-defined operation $[p]\star[q]$ on $\mathcal{M}$ which corresponds to concatenation of paths in $Y(A)$. In particular,
$\star$ is an assocative operation on $\mathcal{M}$. 

\begin{rem}
In fact, it is not hard to show that $\star$ is already associative on $M=\cup_n M_n$, and thus makes $M$ into a category,  but we will not need this below.
\end{rem}

To summarize: We have a well-defined category whose objects are $\mathcal{M}_0=B\backslash \spl{2}{A}$ and whose morphisms are $\mathcal{M}=\cup_{n=0}^\infty\mathcal{M}_n$.
For any $x,y\in\mathcal{M}_0$ and  $[p]\in \mathcal{M}$,we have $[p]\in \mathrm{Hom}(x,y)$ if and only if $i([p])=x$ and $t[p]=y$. When $t[p]=i[q]$ then composition is given by 
$[q]\circ [p]:=[p]\star [q]$. 

We also note that there is a natural right action of $\spl{2}{A}$ on this category corresponding to the action of $\spl{2}{A}$ on paths of $\Gamma(A)$ by right multiplication. Te action is described as follows:   Given $Y\in \spl{2}{A}$ and $p=(X,a_1,\ldots,a_n)\in M_n$, we have
\[
[p]\cdot Y=[X,a_1,\ldots,a_n]\cdot Y:= [XY,a_1,\ldots,a_n].
\]

\subsection{Homotopy relation (1)}
We use the correspondence between paths in $\Gamma(A)$ and elements of $\mathcal{M}$ to express the homotopy relation (1) of the edge-path groupoid of $Y(A)$ in terms of the 
category $\mathcal{M}$. 

First note that if $\{ x,y\} $ is an edge in $\Gamma(A)$ and if $x=\infty\cdot X$ for some $X\in \spl{2}{A}$, then $y=a\cdot X=\infty\cdot E(a)X$ for some $a\in A$. Now
$x\cdot X^{-1}E(a)^{-1}=\infty\cdot E(a)^{-1}=0=\infty\cdot E(0)\implies x=\infty\cdot E(0)E(a)X$. So the path $(x,y,x)$ corresponds to $[X, (a,0)]=[1,(a,0)]\cdot X\in \mathcal{M}_2$.
So the relation derived from homotopy relation (1) has the form $[X,(a,0)]\sim_{(1)}[X]$  and, more generally, for $1\leq i\leq n-1$
\begin{eqnarray*}
&&[X,(a_1,\ldots, a_{i-1},a_i,0,a_{i+2},\ldots, a_n)]\\
&=&[X,(a_1,\ldots,a_{i-1})]\star [T_{i-1}(p), (a_i,0)]\star[T_{i+1}(p),(a_{i+2},\cdots a_n)]\\
&\sim_{(1)}& [X,(a_1,\ldots,a_{i-1})]\star [T_{i-1}(p)]\star[T_{i+1}(p),(a_{i+2},\cdots a_n)]\\
&=& [X,(a_1,\ldots,a_{i-1})]\star[T_{i+1}(p),(a_{i+2},\cdots a_n)]\\
\end{eqnarray*}
Now the connection matrix in this case is 
\[
Z=T_{i+1}(p)T_{i-1}(p)^{-1}=E(0)E(a_i) =
\left[
\begin{array}{cc}
-1&0\\
-a_i&-1
\end{array}
\right]\in B
\]
with unit $u(Z)=-1$. Thus $a_{i+2}Z= a_i+a_{i+2}$ and we conclude that
\begin{eqnarray*}
[X,(a_1,\ldots,a_{i-1},\underbrace{ a_i,0,a_{i+2}},\ldots, a_n)]\quad\quad\quad\quad\quad\quad\quad\quad\quad\quad\quad\quad\quad\\
\sim_{(1)}\left\{
\begin{array}{ll}
\ [X,(a_1,\ldots,a_{i-1})],& \mbox{ if } i+1=n\\
\ [X,(a_1,\ldots,a_{i-1},\underbrace{a_i+a_{i+2}},a_{i+3},\ldots,a_n)],& \mbox{ if } i+1<n\\
\end{array}
\right.
\end{eqnarray*}

We spell out the complete  equivalence relation $\sim_{(1)}$ on $\mathcal{M}$ in greater detail:

 Let us say that replacing $[X,(a_1,\ldots, a_{i-1},a_i,0,a_{i+2},\ldots, a_n)]$ with
$[X,(a_1,\ldots,a_{i-1},a_i+a_{i+2},a_{i+3},\ldots,a_n)]$ (or replacing $[X,(a_1,\ldots,a_{n-1},0)]$ with $[X,(a_1,\ldots,a_{n-2})]$) is a \emph{type (1) contraction}. 

Conversely, replacing $[X, (a_1,\ldots,a_{i-1},  a_i,\ldots, a_n)]$ with $[X, (a_1,\ldots, a_{i-1},a,0,a_{i}-a,\ldots,a_n)]$  (or with $[X,(a_1,\ldots,a_n,a,0)]$ ) for any $a\in A$ will be referred to as a \emph{type (1) expansion}.

Then $[p]\sim_{(1)}[q]$ if and only if $[q]$ can be obtained from $[p]$ by a finite sequence of replacements, each of which is either a type (1) contraction or a type (1) expansion.

We note that  modulo the equivalence relation $\sim_{(1)}$ the category $\mathcal{M}$ becomes a groupoid (indeed, the fundamental groupoid of the graph $\Gamma(A)$). For example, we have 
\[
[X,(a)]\star [E(a)X,(0)]=[X, (a,0)]\sim_{(1)} [X]
\]
and thus $[E(a)X,(0)]$ is a right inverse of $[X,(a)]$ in the quotient category $\mathcal{M}/\sim_{(1)}$. 
\subsection{Homotopy relation (2)}
Now, in a similar manner,  we transfer the homotopy relation (2) to the category $\mathcal{M}$:

Let $\{ x,y,z\}$ be a $3$-clique in $\Gamma(A)$, and hence a $2$-simplex in $Y(A)$. Choose $X\in \spl{2}{A}$ with $x=\infty\cdot  X$. Since $y,z$ are neighbours of $x$, there exist 
$a,b\in A$ with $y=a\cdot X$ and $z= b\cdot X$. Since $y$ and $z$ (and hence $a$ and $b$) are neighbours there exists $u\in A^\times$ with $b=u+a$. Thus 
$y=\infty \cdot E(a)X$, $z=\infty\cdot E(b)X=\infty\cdot E(a+u)X$.   Furthermore, 
\[
z\cdot X^{-1}E(a)^{-1}=\infty\cdot E(u+a)E(a)^{-1}=-u^{-1}=\infty\cdot E(-u^{-1})
\]
 and hence 
$z=\infty\cdot E(u^{-1})E(a)X$

Thus the path $\mathbf{p}_1:=(x,y,z)$ corresponds to the class $[X,(a,-u^{-1})]\in  \mathcal{M}$ and the path $\mathbf{p}_2:=(x,z)$ corresponds to the class $[X,(a+u)]$,
Since $\mathbf{p}_1$ and $\mathbf{p}_2$ are identified by the second homotopy relation, we must have $[X,(a,-u^{-1})]\sim_{(2)}[X,(a+u)]$ for any $X\in\spl{2}{A}$, 
$a\in A$ and $u\in A^\times$, or,  equivalently, $[X,(a,u)]\sim_{(2)}[X,(a-u^{-1})]$ for all $X,a,u$. More generally, we have
\begin{eqnarray*}
&&[X,(a_1,\ldots, a_{i-1},a_i,0,a_{i+2},\ldots, a_n)]\\
&=&[X,(a_1,\ldots,a_{i-1})]\star [T_{i-1}(p), (a_i,u)]\star[T_{i+1}(p),(a_{i+2},\cdots a_n)]\\
&\sim_{(1)}& [X,(a_1,\ldots,a_{i-1})]\star [T_{i-1}(p),(a_i-u^{-1})]\star[T_{i+1}(p),(a_{i+2},\cdots a_n)]\\
&=&  [X,(a_1,\ldots,a_{i-1}, a_i-u^{-1})]\star[T_{i+1}(p),(a_{i+2},\cdots a_n)].\\
\end{eqnarray*}
The connection matrix $Z$ here is 
\[
T_{i+1}(p)\left(E(a_i-u^{-1})T_{i-1}(p)\right)^{-1}= E(u)E(a_i)E(a_i-u^{-1})^{-1}=
\left[
\begin{array}{cc}
u&0\\
-1&u^{-1}
\end{array}
\right]\in B.
\]
So $u(Z)=u$ and $a_{i+2}\cdot Z= u^2a_{i+2}-u$ and we get:
\begin{eqnarray*}
[X,(a_1,\ldots, a_{i-1},a_i,u,a_{i+2},\ldots, a_n)]\quad\quad\quad\quad\quad\quad\quad\quad\quad\quad\\
\sim_{(2)}
\left\{
\begin{array}{ll}
\ [X,(a_1,\ldots,a_{i-1},a_i-u^{-1})],& \mbox{ if } i+1=n\\
\ [X,(a_1,\ldots,a_{i-1},a_i-u^{-1},u^2a_{i+2}-u,u^{-2}a_{i+3},u^2a_{i+4},\ldots],& \mbox{ if } i+1<n\\
\end{array}
\right.\\
\end{eqnarray*}

Let us define a a \emph{type (2) contraction} to be a replacement of $[X,(a_1,\ldots, a_{i-1},a_i,u,a_{i+2},\ldots, a_n)]$ with  $ [X,(a_1,\ldots,a_{i-1},a_i-u^{-1},u^2a_{i+2}+u,u^{-2}a_{i+3},u^2a_{i+4},\ldots]$ 
(or of $ [X,(a_1,\ldots, a_{n-2},a_{n-1},u)]$ with $[X,(a_1,\ldots,a_{n-2},a_{n-1}-u^{-1})]$) .

Conversely,  replacing $[X,(a_1,\ldots,a_i,a_{i+2},a_{i+2},\cdots)]$ with $[X,(a_1,\ldots,a_{i-1},a_i+u^{-1},u,u^{-2}a_{i+1}-u^{-1},u^{-2}a_{i+2},\ldots)]$ (or replacing
$[X,(a_1,\ldots,a_n)]$ with $[X,(a_1,\ldots,a_{n-1},a_n+u^{-1},u]$) will be called a \emph{type (2) expansion}. We have $[p]\sim_{(2)}[q]$ if $[q]$ can be obtained from $[p]$ by a finite 
sequence of replacements, each of which is a type (2) contraction or a type (2) expansion. 

Thus the edge-path groupoid $\mathcal{E}(Y(A))$ is naturally isomorphic to  the quotient of the category $\mathcal{M}$ by the equivalence relations $\sim_{(1)}$ and $\sim_{(2)}$. 
\section{The fundamental group of $Y(A)$}
\subsection{A presentation for $\pi_1(Y(A),\infty)$}
Let $A$ be a ring. Let $\mathcal{E}:=\mathcal{E(Y(A))}$ be the edge-path groupoid of $Y(A)$, identified as a quotient of the category $\mathcal{M}$. We are now in a position to give a presentation of the fundamental group of $Y(A)$ at $\infty$; $\pi_1(Y(A),\infty):=\mathrm{Aut}_{\mathcal{E}}(\infty)$.

Observe that $[p]\in M_n$ represents an automorphism of $\infty$ if and only if $\infty\cdot I(p)=\infty\cdot T(p)=\infty$, and hence if and only if $I(p),T(p)\in B$. Let 
$M_n(B)=\{ p\in m_n\ |\ I(p),T(p)\in B\}$ and let $M(B):=\cup_{n=0}^\infty M_n(B)$. Note that if $p,q\in M(B)$, then $p\star q$ is always defined. 
Let $\mathcal{M}(B)$ be the image of $M(B)$ in $\mathcal{M}$. Thus  $\mathrm{Aut}_{\mathcal{E}}(\infty)$ is the image of $\mathcal{M}(B)$ in $\mathcal{E}$. Using the description of the 
edge-path groupoid $\mathcal{E}$ in the last section, as well as the relationship between the operation $\star$ on $M$ and the equivalence relation $\sim$, we deduce the following 
presentation of $\pi_1(Y(A),\infty)$.

\begin{thm}\label{thm:pi1} $\pi_1(Y(A),\infty)$ is generated by the symbols $p=\an{X,(a_1,\ldots,a_n)}$ $n\geq 0$, where $X\in B$, and (if $n\geq 1$) $a_1,\ldots,a_n\in A$ satisfy $E(a_n)\cdots E(a_1)\in B$. Given such a $p=\an{X,(a_1,\ldots,a_n)}$ we let $I(p):=T_0(p):=X$ , $T_i(p):=E(a_i)\cdots E(a_1)X$ for $i\geq 1$ and $T(p):=T_n(p)$. 
These symbols are subject to the following families of relations:
\begin{enumerate}
\item Given generators $p=\an{X,(a_1,\ldots,a_n)}$, $q={Y,(b_1,\ldots,b_m)}$ we have 
\[
\an{X,(a_1,\ldots,a_n)}\cdot \an{Y,(b_1,\ldots,b_m)}=\an{X,(a_1,\cdots,a_n,b'_1,\ldots,b'_n)}
\]
where $b'_1=b_1\cdot Z$, $b'_j=u^{(-1)^{j-1}2}b_j$ for $j\geq 2$. Here $Z=Z_{p,q}=I(q)T(p)^{-1}=YX^{-1}E(a_1)^{-1}\cdots E(a_n)^{-1}\in B$ and $u=u(Z)$.
\item $\an{X}=1$ for all $X\in B$. 
\item (Type (1) contractions)
 $\an{X, (a_1,\ldots, a_{n-1},0)}=\an{X,(a_1,\ldots,a_{n-2})}$ and 
\[
\an{X,(a_1,\ldots,a_i,0,a_{i+2},\ldots)}=\an{X,(a_1,\ldots,a_i+a_{i+2},a_{i+3},\ldots)}.
\]
\item (Type (2) contractions)
$\an{X,(a_1,\ldots,a_{n-1},u)}=\an{X,(a_1,\ldots,a_{n-1}-u^{-1})}$ and 
\[
\an{X,(a_1,\ldots,a_i, u,a_{i+2},a_{i+3},\ldots)}=\an{X,(a_1,\ldots,a_{i}-u^{-1},u^2a_{i+2}-u,u^{-2}a_{i+3},\ldots)}.
\]
\end{enumerate}
\end{thm}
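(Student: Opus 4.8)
The plan is to produce mutually inverse group homomorphisms between the group $G$ defined by the presentation in the statement and $\pi_1(Y(A),\infty)=\mathrm{Aut}_{\mathcal{E}}(\infty)$, exploiting the identification $\mathcal{E}=\mathcal{E}(Y(A))=\mathcal{M}/(\sim_{(1)},\sim_{(2)})$ obtained in the previous section. The starting point is the observation made there that $\mathrm{Aut}_{\mathcal{E}}(\infty)$ is exactly the image of $\mathcal{M}(B)=M(B)/{\sim}$, together with the fact that $p=(X,(a_1,\ldots,a_n))$ lies in $M(B)$ precisely when $X\in B$ and $E(a_n)\cdots E(a_1)\in B$; hence the generating symbols of $G$ are indexed by $M(B)$, and for $p\in M(B)$ I write $\an{p}$ for the corresponding generator.

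First I would verify that relations (1)--(4) hold in $\pi_1(Y(A),\infty)$, giving a homomorphism $\Phi\colon G\to\pi_1(Y(A),\infty)$ that sends $\an{p}$ to the homotopy class of the edge-loop at $\infty$ determined by $p$. Relation (1) is literally the defining formula for the concatenation $p\star q$, which is always defined when $p,q\in M(B)$ and which induces the product in $\pi_1(Y(A),\infty)$; relation (2) holds because $\an{X}$ with $X\in B$ is the trivial loop at $\infty\cdot X=\infty$; and relations (3) and (4) are precisely the type (1) and type (2) contractions produced when transferring $\sim_{(1)}$ and $\sim_{(2)}$ to $\mathcal{M}$ in the previous section. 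Note that no separate relation for the equivalence $\sim$ used to build the $\mathcal{M}_n$ is needed: if $p=(X,(a_i)),q=(Y,(b_i))\in M(B)$ with $p\sim q$, then, since $Y\in B$, relation (1) applied to the pair $\an{Y},\an{p}$ followed by relation (2) yields $\an{q}=\an{Y}\an{p}=\an{p}$ in $G$, the defining formulas of $\sim$ being exactly the $b'$-formulas of relation (1) in this case. Surjectivity of $\Phi$ is immediate since every element of $\pi_1(Y(A),\infty)$ is the class of some $p\in M(B)$.

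For injectivity I would build $\Psi\colon\pi_1(Y(A),\infty)\to G$ by $\Psi([p]):=\an{p}$ for $p\in M(B)$, which is defined on all of $\pi_1(Y(A),\infty)$ because every element is such a class. The key point is a \emph{confinement} claim: every elementary move---a $\sim$-relation, a type (1) contraction or expansion, or a type (2) contraction or expansion---carries an element of $M(B)$ to an element of $M(B)$. (At the level of edge-paths any such move fixes the initial and terminal vertices, so a loop at $\infty$ goes to a loop at $\infty$; equivalently, one checks that the terminal matrix $T(p)$ remains in $B$.) It follows that whenever $p,q\in M(B)$ and $[p]=[q]$ in $\mathcal{E}$, any chain of elementary moves joining $p$ to $q$ in $\mathcal{M}$ already lies entirely in $M(B)$; translating the moves through the relations of $G$---$\sim$ via relations (1) and (2), a type (1) move via relation (3), a type (2) move via relation (4)---gives $\an{p}=\an{q}$ in $G$, so $\Psi$ is well defined. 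That $\Psi$ is a homomorphism is relation (1) once more: $\Psi([p][q])=\an{p\star q}=\an{p}\an{q}$. Finally $\Psi\circ\Phi$ is the identity on the generators $\an{p}$ of $G$, hence on $G$, so $\Phi$ is injective and therefore an isomorphism.

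The bulk of the proof is routine bookkeeping resting on the previous section: checking that relations (1)--(4) hold in $\pi_1$, and that the contraction and expansion formulas derived there match relations (3) and (4) verbatim, degenerate ``end of tuple'' cases included. The one step that needs genuine (if brief) care is the confinement claim, since without it a homotopy connecting two loops at $\infty$ could a priori be forced to pass through paths that are not loops at $\infty$; I would prove it by observing that each move is induced by one of the two simplicial homotopy relations on edge-paths, both of which preserve endpoints (equivalently, by the computation that the relevant products of $E(a)$'s differ by a matrix in $B$). With that in hand, the theorem is a consequence of the description of $\mathcal{E}(Y(A))$ as a quotient category.
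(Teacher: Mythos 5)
Your proof is correct and follows essentially the same route as the paper, which deduces the theorem directly from the identification of the edge-path groupoid $\mathcal{E}(Y(A))$ with the quotient of the category $\mathcal{M}$ by $\sim_{(1)}$ and $\sim_{(2)}$, together with the observation that $\mathrm{Aut}_{\mathcal{E}}(\infty)$ is the image of $\mathcal{M}(B)$. The paper leaves the verification implicit; your write-up supplies the details, and your confinement claim (that every elementary move preserves membership in $M(B)$, so a homotopy between loops at $\infty$ never leaves the loops at $\infty$) is precisely the point that makes the deduction legitimate.
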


\subsection{Proof of the main theorem}
Recall that $\mathrm{st}:B\to C(A)$ denotes the standard embedding with image the subgroup $\mathbb{B}$ satisfying $\psi\circ\mathrm{st}=\mathrm{Id}_B$.
Given a symbol $p=\an{X,(a_1,\ldots,a_n)}$ we define $\Lambda(p)\in C(A)$ by the formula
\[
\Lambda(p):=\stand{T(p)}^{-1}\epsilon(a_n)\cdots\epsilon(a_1)\stand{I(p)}.
\]

Observe that $\psi(\Lambda(p))= T(p)^{-1}E(a_n)\cdots E(a_1)I(p)=T(p)^{-1}T(p)=1$, so that $\Lambda(p)\in U(A)$ for all $p$. 
To begin with, we will show that $\Lambda$ determines a group anti-homomorphism $\pi_1(Y(A),\infty)\to U(A)$. We must verify that $\Lambda$ respects the relations (1)-(4). 

\begin{lem}\label{lem:rel1}
Given $p=\an{X,(a_1,\ldots,a_n)}$, $q=\an{Y,(b_1,\ldots,b_m)}$ we have \[
\Lambda(p\cdot q)=\Lambda(q)\cdot \Lambda(p)
\]
 where $p\cdot q$ is given by relation (1) of Theorem \ref{thm:pi1}.
\end{lem}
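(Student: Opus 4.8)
The plan is to unwind the definition of $\Lambda$ on $p\cdot q$ and show directly that it equals $\Lambda(q)\Lambda(p)$. Recall from relation (1) of Theorem \ref{thm:pi1} that $p\cdot q=\an{X,(a_1,\ldots,a_n,b_1',\ldots,b_m')}$ where $b_1'=b_1\cdot Z$, $b_j'=u^{(-1)^{j-1}2}b_j$ for $j\geq 2$, with $Z=Z_{p,q}=I(q)T(p)^{-1}=\stand{Y}\stand{X}^{-1}\cdots$ no — more precisely $Z=YX^{-1}E(a_1)^{-1}\cdots E(a_n)^{-1}\in B$ and $u=u(Z)$. First I would observe that $I(p\cdot q)=X=I(p)$ and $T(p\cdot q)=D(u)^{(-1)^{m-1}}T(q)$ (this is exactly the identity $T_{j+n}(p\star q)=D(u_{p,q})^{(-1)^{j-1}}T_j(q)$ recorded just before Lemma \ref{lem:dec}, with $j=m$). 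Hence, writing out $\Lambda(p\cdot q)$,
\[
\Lambda(p\cdot q)=\stand{T(q)}^{-1}h(u)^{(-1)^{m-1}}\,\epsilon(b_m')\cdots\epsilon(b_1')\,\epsilon(a_n)\cdots\epsilon(a_1)\stand{X}.
\]

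The heart of the argument is to rewrite the middle string $h(u)^{(-1)^{m-1}}\epsilon(b_m')\cdots\epsilon(b_1')$ using Proposition \ref{prop:key}. Indeed, Proposition \ref{prop:key} applied with the unit $u$, the element $\beta=\stand{Z}\in\mathbb{B}$, and the sequence $b_1,\ldots,b_m$ says precisely that if one sets $b_1'=b_1\cdot Z$ and $b_j'=u^{(-1)^{j-1}2}b_j$ for $j\geq 2$ (exactly our $b_j'$, since $\psi$ carries the $\mathbb{B}$-action to the $B$-action on $A$ and Proposition \ref{prop:changeX} matches the formulas), then
\[
\epsilon(b_m')\cdots\epsilon(b_1')=h(u)^{(-1)^{m-1}}\epsilon(b_m)\cdots\epsilon(b_1)\,\stand{Z}.
\]
Substituting this into the displayed expression and using $h(u)^{(-1)^{m-1}}h(u)^{(-1)^{m-1}}=h(u^{2(-1)^{m-1}})$... — actually cleaner: $h(-1)$ is central and $h(u)^{\pm 1}h(u)^{\pm 1}=h(u^{\pm2})$, so the two copies of $h(u)^{(-1)^{m-1}}$ combine and one checks they cancel against the conjugation, leaving
\[
\Lambda(p\cdot q)=\stand{T(q)}^{-1}\epsilon(b_m)\cdots\epsilon(b_1)\,\stand{Z}\,\epsilon(a_n)\cdots\epsilon(a_1)\stand{X}.
\]
Now I would insert $1=\stand{T(p)}^{-1}\stand{T(p)}$ appropriately: since $Z=I(q)T(p)^{-1}$, we have $\stand{Z}=\stand{I(q)}\stand{T(p)}^{-1}$ because $\mathrm{st}$ is a group homomorphism on $B$ and $I(q),T(p)\in B$. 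Thus $\stand{Z}\,\epsilon(a_n)\cdots\epsilon(a_1)\stand{X}=\stand{I(q)}\bigl(\stand{T(p)}^{-1}\epsilon(a_n)\cdots\epsilon(a_1)\stand{I(p)}\bigr)=\stand{I(q)}\Lambda(p)$, and the remaining factor $\stand{T(q)}^{-1}\epsilon(b_m)\cdots\epsilon(b_1)\stand{I(q)}=\Lambda(q)$. Hence $\Lambda(p\cdot q)=\Lambda(q)\Lambda(p)$, as claimed.

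The main obstacle I anticipate is bookkeeping with the $h(u)$ factors and the sign $(-1)^{m-1}$: one must be careful that the conjugation identity from Proposition \ref{prop:key} is applied with the correct exponent and that the leftover powers of $h(u)$ genuinely cancel (using centrality of $h(-1)$, $h(-1)^2=1$, and relation (1) of $C(A)$). I would also double-check the compatibility of the $\mathbb{B}$-action on $A$ with the $B$-action — this is exactly the content of the explicit formula $a\cdot\beta(u,b)=au^2+bu$ together with Proposition \ref{prop:changeX} — so that the $b_j'$ produced by relation (1) of Theorem \ref{thm:pi1} really are the $b_j'$ produced by Proposition \ref{prop:key}. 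Once that identification is in place the computation is a formal, if slightly intricate, manipulation in $C(A)$.
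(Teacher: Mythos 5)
Your argument is correct and is essentially the paper's proof run in the opposite direction: the paper expands $\Lambda(q)\Lambda(p)$ and uses Proposition \ref{prop:key} with $\beta_{p,q}=\stand{Z_{p,q}}$, together with the identity $T(p\cdot q)=D(u)^{(-1)^{m-1}}T(q)$ and the fact that $\mathrm{st}$ is a homomorphism on $B$, to arrive at $\Lambda(p\cdot q)$ --- exactly the ingredients you invoke. The one slip is in your first display: inverting $\stand{T(p\cdot q)}=h(u)^{(-1)^{m-1}}\stand{T(q)}$ gives $\stand{T(q)}^{-1}h(u)^{(-1)^{m}}$, not $\stand{T(q)}^{-1}h(u)^{(-1)^{m-1}}$, and it is precisely this opposite sign that makes the two $h(u)$ factors cancel outright (as you correctly anticipate they must) rather than combine to $h(u)^{\pm 2}$.
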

\begin{proof}
By Proposition \ref{prop:key}, we have, setting $\beta_{p,q}:=\stand{Z_{p,q}}\in\mathbb{B}$,
\begin{eqnarray*}
\epsilon(b'_m)\cdots \epsilon(b'_1)&=&h(u)^{(-1)^{m-1}}\epsilon(b_m)\cdots \epsilon(b_1)\beta_{p,q}\\
&=& h(u)^{(-1)^{m-1}}\epsilon(b_m)\cdots \epsilon(b_1)\stand{I(q)}\stand{T(p)}^{-1}\\
\end{eqnarray*}
and hence
\[
\epsilon(b'_m)\cdots \epsilon(b'_1)\stand{T(p)}=h(u)^{(-1)^{m-1}}\epsilon(b_m)\cdots \epsilon(b_1)\stand{I(q)}.
\]
Applying $\psi$ this also gives
\[
T(p\cdot q)=E(b'_m)\cdots E(b'_1)T(p)=D(u)^{(-1)^{m-1}}E(b_m)\cdots E(b_1)I(q)=D(u)^{(-1)^{m-1}}T(q).
\]

Hence $T(p\cdot q)^{-1}=T(q)^{-1}D(u)^{(-1)^m}$ and $\stand{T(p\cdot q)}^{-1}=\stand{T(q)}^{-1}h(u)^{(-1)^m}$.

Thus 
\begin{eqnarray*}
\Lambda(q)\Lambda(p)&=&\stand{T(q)}^{-1}\epsilon(b_m)\cdots \epsilon(b_1)\stand{I(q)}\stand{T(p)}^{-1}\epsilon(a_n)\cdots \epsilon(a_1)\stand{I(p)}\\
&=& \stand{T(q)}^{-1}\epsilon(b_m)\cdots \epsilon(b_1)\beta_{p,q}\epsilon(a_n)\cdots \epsilon(a_1)\stand{I(p)}\\
&=&  \stand{T(q)}^{-1}h(u)^{(-1)^m}\epsilon(b'_m)\cdots \epsilon(b'_1)\epsilon(a_n)\cdots \epsilon(a_1)\stand{I(p)}\\
&=&  \stand{T(p\cdot q)}^{-1}\epsilon(b'_m)\cdots \epsilon(b'_1)\epsilon(a_n)\cdots \epsilon(a_1)\stand{I(p)}\\
&=& \Lambda(p\cdot q)
\end{eqnarray*}
as required.
\end{proof}

If $p=\an{X}$, we have $I(p)=X=T(p)$ and hence $\Lambda(\an{X})=1$:
\begin{lem}\label{lem:rel2} For all $X\in B$, $\lambda(\an{X})=1$ in $U(A)$.
\end{lem}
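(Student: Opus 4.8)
The statement to prove is Lemma \ref{lem:rel2}: $\Lambda(\an{X}) = 1$ in $U(A)$ for all $X \in B$. Recall the definition $\Lambda(p) := \stand{T(p)}^{-1}\epsilon(a_n)\cdots\epsilon(a_1)\stand{I(p)}$, where in the symbol $p = \an{X,(a_1,\ldots,a_n)}$ we have $I(p) = X$ and $T(p) = E(a_n)\cdots E(a_1)X$. For the symbol $p = \an{X}$ with no entries (the case $n=0$), there is simply no $\epsilon$-factor in the middle, and $T(p) = I(p) = X$, so the formula collapses.

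The plan is therefore completely immediate: substitute $n = 0$ into the defining formula for $\Lambda$. First I would note that for $p = \an{X}$ we have $I(p) = T(p) = X$ by the definition of $I$ and $T$ in Theorem \ref{thm:pi1} (with $T_0(p) = X$ and no further $T_i$). Then the product $\epsilon(a_n)\cdots\epsilon(a_1)$ is the empty product, i.e. the identity of $C(A)$. Hence $\Lambda(\an{X}) = \stand{X}^{-1}\cdot 1 \cdot \stand{X} = \stand{X}^{-1}\stand{X} = 1$ in $C(A)$. Since we already observed (just before Lemma \ref{lem:rel1}) that $\Lambda(p) \in U(A)$ for every symbol $p$, this shows $\Lambda(\an{X}) = 1$ in $U(A)$, as claimed.

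There is essentially no obstacle here; the only thing to be careful about is that $\stand{X}$ is a genuine element of the subgroup $\mathbb{B} \subset C(A)$ (by Lemma \ref{lem:b}, $\mathrm{st}: B \to \mathbb{B}$ is the standard embedding), so the cancellation $\stand{X}^{-1}\stand{X} = 1$ takes place inside $C(A)$ and not merely after applying $\psi$. This is exactly the content the paper already states in the sentence preceding the lemma, so the proof is just a one-line verification. I would write it as follows.

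\begin{proof}
If $p = \an{X}$ then, by definition, $I(p) = X = T(p)$, and the symbol $p$ has no entries, so the middle product $\epsilon(a_n)\cdots\epsilon(a_1)$ is empty. Hence
\[
\Lambda(\an{X}) = \stand{T(p)}^{-1}\stand{I(p)} = \stand{X}^{-1}\stand{X} = 1
\]
in $C(A)$, and therefore also in $U(A)$.
\end{proof}
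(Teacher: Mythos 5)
Your proof is correct and is essentially the paper's own argument: the paper disposes of this lemma in the single sentence preceding it, noting that $I(\an{X})=X=T(\an{X})$ so that $\Lambda(\an{X})=\stand{X}^{-1}\stand{X}=1$. Your added remark that the cancellation happens inside $C(A)$ (via the embedding $\mathrm{st}$) is a reasonable, if unnecessary, precaution.
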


\begin{lem}\label{lem:rel3} Let $X\in B$.
\begin{enumerate}
\item Suppose that $a_1,\ldots,a_i,a_{i+2},\ldots,a_n\in A$ satisfy $E(a_n)\cdots E(a_{i+2})E(0)E(a_i)\cdots E(a_1)\in B$. Then $\Lambda(p)=\Lambda(q)$ in $U(A)$ where 
$p=\an{X,(a_1,\ldots,a_i,0,a_{i+2},\ldots,a_n)}$ and $q=\an{X,(a_1,\ldots,a_{i-1},a_i+a_{i+2},\ldots,a_n)}$. 
\item Suppose that $a_1,\ldots,a_{n-1}\in A$ satisfy $E(0)E(a_{n-1})\cdots E(a_1)\in B$. Then $\Lambda(p)=\Lambda(q)$ in $U(A)$ where 
$p=\an{X,(a_1,\ldots,a_{n-1},0)}$ and $q=\an{X,(a_1,\ldots,a_{n-2})}$. 
\end{enumerate}
\end{lem}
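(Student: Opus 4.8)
The plan is to deduce both statements directly from defining relation (2) of $C(A)$, namely $\epsilon(a)\epsilon(0)\epsilon(b)=h(-1)\epsilon(a+b)$, together with the facts recorded in Section \ref{sec:univ}: $h(-1)$ is central in $C(A)$ with $h(-1)^2=1$; $\epsilon(0)^2=h(-1)$; $\stand{\cdot}\colon B\to C(A)$ is a group homomorphism with $\stand{D(-1)}=h(-1)$ and $\stand{E_{2,1}(a)}=y(a)$; and the matrix identities $E(a)E(0)E(b)=D(-1)E(a+b)$ and $E(0)E(a)=D(-1)E_{2,1}(a)$ (the latter being lower triangular).

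For part (1), I would write out, using $I(p)=X=I(q)$, the formulas $\Lambda(p)=\stand{T(p)}^{-1}\epsilon(a_n)\cdots\epsilon(a_{i+2})\epsilon(0)\epsilon(a_i)\cdots\epsilon(a_1)\stand{X}$ and $\Lambda(q)=\stand{T(q)}^{-1}\epsilon(a_n)\cdots\epsilon(a_{i+3})\epsilon(a_i+a_{i+2})\epsilon(a_{i-1})\cdots\epsilon(a_1)\stand{X}$. Relation (2) applied to the inner triple gives $\epsilon(a_{i+2})\epsilon(0)\epsilon(a_i)=h(-1)\epsilon(a_i+a_{i+2})$, so, since $h(-1)$ is central, the product of $\epsilon$'s appearing in $\Lambda(p)$ equals $h(-1)$ times the product appearing in $\Lambda(q)$. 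Applying $\psi$ to the corresponding matrix relation, and using that $D(-1)=-I$ is central in $\gl{2}{A}$, gives $T(p)=D(-1)T(q)$ in $\spl{2}{A}$, hence $\stand{T(p)}=h(-1)\stand{T(q)}$ and so $\stand{T(p)}^{-1}=\stand{T(q)}^{-1}h(-1)$. Feeding both facts into the formula for $\Lambda(p)$, the two occurrences of $h(-1)$ cancel since $h(-1)^2=1$, and $\Lambda(p)=\Lambda(q)$ follows.

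For part (2), the key preliminary is the identity $\epsilon(0)\epsilon(a)=h(-1)y(a)=\stand{D(-1)}\stand{E_{2,1}(a)}=\stand{D(-1)E_{2,1}(a)}=\stand{E(0)E(a)}$, valid for every $a\in A$; it follows from $y(a)=\epsilon(0)^3\epsilon(a)$, $\epsilon(0)^2=h(-1)$, $h(-1)^2=1$ and $E(0)E(a)=D(-1)E_{2,1}(a)$. Then, writing $\Lambda(p)=\stand{T(p)}^{-1}\epsilon(0)\epsilon(a_{n-1})\epsilon(a_{n-2})\cdots\epsilon(a_1)\stand{X}$ and replacing $\epsilon(0)\epsilon(a_{n-1})$ by $\stand{E(0)E(a_{n-1})}$, it remains to observe that $T(p)=E(0)E(a_{n-1})\cdot T(q)$ with both factors in $B$: indeed $E(0)E(a_{n-1})$ is lower triangular, and the hypothesis $E(0)E(a_{n-1})\cdots E(a_1)\in B$ forces $T(q)=E(a_{n-2})\cdots E(a_1)X\in B$. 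Hence $\stand{T(p)}=\stand{E(0)E(a_{n-1})}\stand{T(q)}$, so $\stand{T(p)}^{-1}\stand{E(0)E(a_{n-1})}=\stand{T(q)}^{-1}$, and the formula for $\Lambda(p)$ collapses to $\stand{T(q)}^{-1}\epsilon(a_{n-2})\cdots\epsilon(a_1)\stand{X}=\Lambda(q)$; the degenerate case $n=2$, where $q=\an{X}$, is identical.

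None of this is deep: the proof is essentially bookkeeping --- tracking which entry is modified, remembering that the $\epsilon$'s enter $\Lambda$ in the reversed order, and checking that every matrix fed to $\stand{\cdot}$ really lies in $B$. The one point where a sign slip is easy is the identity $T(p)=D(-1)T(q)$ in part (1); I would derive it carefully from $E(a)E(0)E(b)=-E(a+b)$ and the centrality of $-I$, since being off there by a factor of $h(-1)$ would invalidate the argument.
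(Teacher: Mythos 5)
Your proposal is correct and follows essentially the same route as the paper: part (1) via relation (2) and the identity $T(p)=D(-1)T(q)$ with the two central factors $h(-1)$ cancelling, and part (2) via the observation that $\epsilon(0)\epsilon(a_{n-1})=\stand{E(0)E(a_{n-1})}$ lies in $\mathbb{B}$ so that it is absorbed into $\stand{T(p)}^{-1}$. The points you flag as delicate (the sign in $T(p)=D(-1)T(q)$ and checking that every matrix fed to $\stand{\cdot}$ lies in $B$) are exactly the ones the paper's proof handles.
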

\begin{proof} Recall that $\epsilon(a)\epsilon(0)\epsilon(b)=h(-1)\epsilon(a+b)$ in $C(A)$ and that $E(a)E(0)E(b)=D(-1)E(a+b)$ in $E_2(A)$. 
\begin{enumerate}
\item 
Note that $I(p)=I(q)$ and 
\begin{eqnarray*}
T(p)&=& E(a_n)\cdots \left(E(a_{i+2})E(0)E(a_i)\right)\cdots E(a_1)X\\
&=& D(-1)E(a_n)\cdots E(a_i+a_{i+2})\cdots E(a_1)X\\
&=& D(-1)T(q).
\end{eqnarray*}
It follows that $\stand{T(p)}=h(-1)\stand{T(q)}$ in $ C(A)$. Thus (recalling that $h(-1)$ is central in $C(A)$)
\begin{eqnarray*}
\Lambda(p)&=&\stand{T(p)}^{-1}\epsilon(a_n)\cdots \left(\epsilon(a_{i+2})\epsilon(0)\epsilon(a_i)\right)\cdots \epsilon(a_1)\stand{I(p)}\\
&=& \stand{T(p)}^{-1}h(-1)\epsilon(a_n)\cdots \epsilon(a_i+a_{i+2})\cdots \epsilon(a_1)\stand{I(p)}\\
&=& \stand{T(q)}^{-1}h(-1)^2\epsilon(a_n)\cdots \epsilon(a_i+a_{i+2})\cdots \epsilon(a_1)\stand{I(q)}\\
&=& \stand{T(q)}^{-1}\epsilon(a_n)\cdots \epsilon(a_i+a_{i+2})\cdots \epsilon(a_1)\stand{I(q)}\\
&=&\Lambda(q).
\end{eqnarray*}
\item We have, by definition of $y(a)$ and $\beta(u,a)$,  $\epsilon(-1)\epsilon(a_{n-1})=h(-1)y(a_{n-1})=\beta(-1,-a_{n-1})$ in $\mathbb{B}\subset C(A)$  and $E(0)E(A_{n-1})=D(-1)E_{21}(a_{n-1})$ 
in $B$. It follows that  $\stand{E(0)E(a_{n-1}}=\epsilon(0)\epsilon(a_{n-1})$. 

Now $T(p)=E(0)E(a_{n-1}) E(a_{n-2})\cdots E(a_1)X=E(0)E(a_{n-1})T(q)$. Hence $\stand{T(p)}= \epsilon(0)\epsilon(a_{n-1})\stand{T(q)}$ in $\mathbb{B}$. It follows that 
\begin{eqnarray*}
\Lambda(p)&=& \stand{T(p)}^{-1}\epsilon(0)\epsilon(a_{n-1})\epsilon(a_{n-2})\cdots \epsilon(a_1)\stand{I(p)}\\
&=& \stand{T(q)}^{-1}\epsilon(a_{n-2})\cdots \epsilon(a_1)\stand{I(q)}\\
&=& \Lambda(q).
\end{eqnarray*}
\end{enumerate}
\end{proof}

\begin{lem}\label{lem:rel4} Let $X\in B$.  Let $u\in A^\times$.
\begin{enumerate}
\item Suppose that $a_1,\ldots,a_i,a_{i+2},\ldots,a_n\in A$ satisfy $E(a_n)\cdots E(a_{i+2})E(u)E(a_i)\cdots E(a_1)\in B$. 
Then $\Lambda(p)=\Lambda(q)$ in $U(A)$ where 
$p=\an{X,(a_1,\ldots,a_i,u,a_{i+2},\ldots,a_n)}$ and $q=\an{X,(a_1,\ldots,a_{i-1},a_i-u^{-1},u^2a_{i+2}-u,u^{-2}u_{i+3},\cdots)}$. 
\item Suppose that $a_1,\ldots,a_{n-1}\in A$ satisfy $E(u)E(a_{n-1})\cdots E(a_1)\in B$. Then $\Lambda(p)=\Lambda(q)$ in $U(A)$ where 
$p=\an{X,(a_1,\ldots,a_{n-1},u)}$ and $q=\an{X,(a_1,\ldots,a_{n-2},a_{n-1}-u^{-1})}$. 
\end{enumerate}
\end{lem}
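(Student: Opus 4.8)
The plan is to run the argument of Lemma~\ref{lem:rel3} with the relation $\epsilon(a)\epsilon(0)\epsilon(b)=h(-1)\epsilon(a+b)$ replaced by a ``twisted'' version in which the middle slot is a unit. The essential first step is to prove, for all $u\in A^\times$ and $a\in A$, the identity
\[
\epsilon(u)\epsilon(a)=\beta(u,-1)\,\epsilon(a-u^{-1})\qquad\text{in }C(A),
\]
where $\beta(u,-1)=h(u)y(-u)$; since $\psi(\beta(u,-1))=\matr{u}{0}{-1}{u^{-1}}$, we have $\beta(u,-1)=\stand{Z}$ for the connection matrix $Z=\matr{u}{0}{-1}{u^{-1}}\in B$ already computed in the derivation of homotopy relation~(2). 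I would verify this identity purely formally from the defining relations of $C(A)$: starting from the right-hand side, expand $y(-u)=h(-1)\epsilon(0)\epsilon(-u)$, use $h(u)h(-1)=h(-u)$ (relation~(1)), substitute $\epsilon(0)\epsilon(-u)=\epsilon(u)^{-1}\epsilon(0)^{-1}$ (from $\epsilon(u)^{-1}=\epsilon(0)\epsilon(-u)\epsilon(0)$), then $h(-u)\epsilon(u)^{-1}=\epsilon(u)\epsilon(u^{-1})$ (definition of $h$), then $\epsilon(0)^{-1}=h(-1)\epsilon(0)$ (since $\epsilon(0)^2=h(-1)$ and $h(-1)^2=1$), and finally collapse $\epsilon(u^{-1})\epsilon(0)\epsilon(a-u^{-1})=h(-1)\epsilon(a)$ by relation~(2); using that $h(-1)$ is central and squares to $1$, the right-hand side becomes $\epsilon(u)\epsilon(a)$.

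With this identity available, both parts follow by a computation of the same shape as Lemma~\ref{lem:rel3}. For part~(1), write $\Lambda(p)=\stand{T(p)}^{-1}\epsilon(a_n)\cdots\epsilon(a_{i+2})\epsilon(u)\epsilon(a_i)\cdots\epsilon(a_1)\stand{X}$, replace $\epsilon(u)\epsilon(a_i)$ by $\beta(u,-1)\epsilon(a_i-u^{-1})$, and then apply Proposition~\ref{prop:key} with $\beta=\beta(u,-1)$ (so $u(\beta)=u$) to the string $(a_{i+2},\dots,a_n)$: this moves $\beta(u,-1)$ to the left past $\epsilon(a_n)\cdots\epsilon(a_{i+2})$ at the cost of a factor $h(u)^{(-1)^m}$, $m=n-i-1$, and replaces that string by $\epsilon(b'_m)\cdots\epsilon(b'_1)$ with $b'_1=a_{i+2}\cdot\beta(u,-1)=u^2a_{i+2}-u$ and $b'_j=u^{(-1)^{j-1}2}a_{i+1+j}$ for $j\ge 2$ --- exactly the tail of $q$. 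Applying $\psi$ to the same manipulation (Corollary~\ref{cor:key2}) gives $T(p)=D(u)^{(-1)^m}T(q)$, hence $\stand{T(p)}=h(u)^{(-1)^m}\stand{T(q)}$ since $\stand$ is a homomorphism on $B$ with $\stand{D(u)}=h(u)$; the two powers of $h(u)$ cancel and one is left with $\stand{T(q)}^{-1}\epsilon(b'_m)\cdots\epsilon(b'_1)\epsilon(a_i-u^{-1})\epsilon(a_{i-1})\cdots\epsilon(a_1)\stand{X}=\Lambda(q)$. Part~(2) is the case $m=0$: there is no tail, $T(p)=ZT(q)$, $\stand{T(p)}=\beta(u,-1)\stand{T(q)}$, and the $\beta(u,-1)$ introduced by the identity cancels immediately.

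I expect the twisted identity in $C(A)$ to be the only genuine obstacle; everything afterwards is the bookkeeping already encapsulated in Proposition~\ref{prop:key} and Corollary~\ref{cor:key2}. One should also observe along the way that $q$ is a legitimate generator in the sense of Theorem~\ref{thm:pi1}, i.e.\ that $T(q)\in B$ --- but this is immediate from $T(q)=D(u)^{\pm1}T(p)$ together with $T(p),D(u)\in B$.
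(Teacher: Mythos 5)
Your proposal is correct and follows essentially the same route as the paper: both hinge on the identity $\epsilon(u)\epsilon(a)=\beta\,\epsilon(a-u^{-1})$ with $\beta=\beta(u,-1)=h(-1)\epsilon(u)\epsilon(u^{-1})\epsilon(0)\in\mathbb{B}$ (derived from the defining relations of $C(A)$ exactly as you do), followed by Proposition \ref{prop:key} and Corollary \ref{cor:key2} to push $\beta$ past the tail and cancel the resulting powers of $h(u)$, with part (2) as the tail-less case. The only discrepancy is a harmless sign in the exponent of the $h(u)$ factor produced by Proposition \ref{prop:key} (it is $(-1)^{m-1}$ rather than $(-1)^m$), which does not affect the cancellation you invoke.
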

\begin{proof} 
\begin{enumerate}
\item 
In $\mathbb{B}$ we have  
\begin{eqnarray*}
\beta(-u,1)=h(-u)y(-u)=\epsilon(u)\epsilon(u^{-1})\epsilon(u)\epsilon(0)^3\epsilon(-u)\\
=h(-1)\epsilon(u)\epsilon(u^{-1})\epsilon(u)\epsilon(0)\epsilon(-u)
=h(-1)\epsilon(u)\epsilon(u^{-1})\epsilon(0)
\end{eqnarray*}
using relation (2) in the definition of $C(A)$. Thus consider $\beta:=h(-1)\beta(-u,1)=\epsilon(u)\epsilon(u^{-1})\epsilon(0)\in \mathbb{B}$. Then
\[
\beta\epsilon(a_i-u^{-1})=\epsilon(u)\epsilon(u^{-1})\epsilon(0)\epsilon(a_i-u^{-1})=\epsilon(u)\epsilon(a_i) \mbox{ in  }C(A).
\]
Furthermore, if we let $Z:=\psi(\beta)\in B$, then it follows that $ZE(a_i-u^{-1})=E(u)E(a_i)$ in $B$. 

By Proposition \ref{prop:key} we have
\[
\epsilon(a'_n)\cdots \epsilon(a'_{i+2})=h(u)^{(-1)^{n-i}}\epsilon(a_n)\epsilon(a_{i+2})\beta
\]
where $a'_{i+2}:=u^2a_{i+2}-u=a_{i+2}\beta$, $a'_{i+3}=u^{-2}a_{i+3},\ldots$ etc. We thus also have
\[
E(a'_n)\cdots E(a'_{i+2})=D(u)^{(-1)^{n-i}}E(a_n)\cdots E(a_{i+2})Z.
\]

Hence 
\begin{eqnarray*}
T(q)&=& E(a'_n)\cdots E(a'_{i+2})E(a_i-u^{-1})T_{i-1}(p)\\
&=& D(u)^{(-1)^{n-i}}E(a_n)\cdots E(a_{i+2})ZE(a_i-u^{-1}T_{i-1}(p)\\
&=&  D(u)^{(-1)^{n-i}}E(a_n)\cdots E(a_{i+2})E(u)E(a_i)T_{i-1}(p)\\
&=& D(u)^{(-1)^{n-i}}T(p). 
\end{eqnarray*}

So
\begin{eqnarray*}
\Lambda(q)&=& \stand{T(q)}^{-1}\epsilon(a'_n)\cdots \epsilon(a'_{i+2})\epsilon(a_i-u^{-1})\cdots \epsilon(a_1)\stand{X}\\
&=& \stand{T(p)}^{-1}h(u)^{(-1)^{n-i+1}}h(u)^{(-1)^{n-i}}\epsilon(a_n)\epsilon(a_{i+2})\beta\epsilon(a_i-u^{-1})\cdots \epsilon(a_1)\stand{X}\\
&=& \stand{T(p)}^{-1}\epsilon(a_n)\epsilon(u)\epsilon(a_i)\cdots \epsilon(a_1)\stand{X}\\
&=& \Lambda(p).
\end{eqnarray*}
\item  Let $\beta$, $Z$ be as in (1), taking $i=n-1$. Then $\epsilon(u)\epsilon(a_{n-1})=\beta\epsilon(a_{n-1}-u^{-1})$ and $E(u)E(a_{n-1})=ZE(a_{n-1}-u^{-1})$. 
Thus  
\[
T(p)=E(u)E(a_{n-1})T_{n-2}(p)=Z E(a_{n-1}-u^{-1})T_{n-2}(q)=ZT(q)
\]
 and so $\stand{T(p)}=\beta\stand{T(q)}$.  Thus
\begin{eqnarray*}
\Lambda(p)&=& \stand{T(p)}^{-1}\epsilon(u)\epsilon(a_{n-1})\cdots\epsilon(a_1)\stand{X}\\
&=&\stand{T(q)}^{-1}\beta^{-1}\cdot\beta\epsilon(a_{n-1}-u^{-1})\cdots\epsilon(a_1)\stand{X}\\
&=& \Lambda(q).
\end{eqnarray*}
\end{enumerate}
\end{proof}

 Lemmas \ref{lem:rel1}--\ref{lem:rel4} imply that $\Lambda$ defines a group anti-homomorphism $\pi_1(Y(A),\infty)\to U(A)$   (which, composed with the map $x\mapsto x^{-1}$ then yields a homomorphism, of course). We now proceed to construct an inverse, $\Theta$, to
$\Lambda$: 

Let $\tilde{C}(A)$ denote the free group with generators $\tilde{\epsilon}(a), a\in A$. Let $\tilde{\psi}:\tilde{C}(A)\to \spl{2}{A}$ be the homomorphism sending 
$\tilde{\epsilon}(a)$ to $E(a)$ for all $a\in A$. Let $\tilde{U}(A):=\ker{\tilde{\psi}}$. We begin by defining a map of \emph{sets} $\tilde{\Theta}:\tilde{U}(A)\to \pi_1(Y(A),\infty)$.

An element of $\tilde{C}(A)$ is an expression $\alpha=\tilde{\epsilon}(b_1)^{c_1}\cdots\tilde{\epsilon}(b_n)^{c_n}$ where $c_i\in \{ \pm 1\}$. Such an element $\alpha$ lies in 
$\tilde{U}(A)$ if and only if $\prod_{i=1}^nE(b_i)^{c_i}=1$ in $\spl{2}{A}$.

Now, for $b\in A$, we let $s_1(b)=b$ and we define $s_{-1}(b)$ to be the string $0,-b,0$. Given $\alpha=\prod_{i=1}^n\tilde{\epsilon}(b_i)^{c_i}\in \tilde{U}(A)$ we define
\[
\tilde{\Theta}(\alpha):= \an{1,(s_{c_n}(b_n),\ldots,s_{c_1}(b_1))}\in \pi_1(Y(A),\infty).
\]

We note that $\tilde{\Theta}$ is well-defined:
\begin{lem}\label{lem:thetadef} Let $\alpha=\prod_{i=1}^n\tilde{\epsilon}(b_i)^{c_i}\in \tilde{U}(A)$ and suppose that for some $i<n$ we have $b_i=b_{i+1}$ and 
$c_i=-c_{i+1}$. Then $\alpha=\alpha':= \tilde{\epsilon}(b_1)^{c_1}\cdots\tilde{\epsilon}(b_{i-1})^{c_{i-1}}\tilde{\epsilon}(b_{i+2})^{c_{i+2}}\cdots \tilde{\epsilon}(b_n)^{c_n}$
in $\tilde{U}(A)$ and $\tilde{\Theta}(\alpha)=\tilde{\Theta}(\alpha')$.
\end{lem}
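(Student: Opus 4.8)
The first assertion requires no real work: since $b_i=b_{i+1}$ and $c_i=-c_{i+1}$, the consecutive syllables $\tilde{\epsilon}(b_i)^{c_i}\tilde{\epsilon}(b_{i+1})^{c_{i+1}}=\tilde{\epsilon}(b_i)^{c_i}\tilde{\epsilon}(b_i)^{-c_i}$ cancel in the free group $\tilde{C}(A)$, so $\alpha=\alpha'$ already in $\tilde{C}(A)$ and in particular $\alpha'\in\tilde{U}(A)$. The content of the lemma is the equality $\tilde{\Theta}(\alpha)=\tilde{\Theta}(\alpha')$ in $\pi_1(Y(A),\infty)$, and that is what the plan below addresses.

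First I would unwind the definition of $\tilde{\Theta}$: the string defining $\tilde{\Theta}(\alpha)$ is obtained from the one defining $\tilde{\Theta}(\alpha')$ by inserting, between a (possibly empty) prefix $\gamma_1$ and a (possibly empty) suffix $\gamma_2$, the block $\beta$ consisting of $s_{c_{i+1}}(b)$ followed by $s_{c_i}(b)$, where $b=b_i=b_{i+1}$. Applying $\psi$ to the identity $\epsilon(b)^{-1}=\epsilon(0)\epsilon(-b)\epsilon(0)$ noted after the definition of $C(A)$ gives $E(b)^{-1}=E(0)E(-b)E(0)$, so the ``matrix of a string'' — the product of the $E$'s of its entries taken in reverse order — is $E(b)$ for $s_1(b)$ and $E(b)^{-1}$ for $s_{-1}(b)=(0,-b,0)$, whence the matrix of $\beta$ is $E(b)^{c_{i+1}}E(b)^{c_i}=1$. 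I would record two consequences of this up front: both $\tilde{\Theta}(\alpha)$ and $\tilde{\Theta}(\alpha')$ are genuine generators in the presentation of Theorem \ref{thm:pi1} (their strings have matrix $\tilde{\psi}(\alpha)=\tilde{\psi}(\alpha')=1\in B$), and every string I meet along the way will remain a valid symbol, since by $E(a)E(0)E(b)=D(-1)E(a+b)$ and by the computation of $E(0)E(c)$ a type (1) contraction changes the matrix of a string only by left multiplication by $D(-1)$ or by $E(0)E(c)$, both of which lie in $B$. Explicitly, $\beta$ is the string $(b,0,-b,0)$ if $c_{i+1}=1$ and $(0,-b,0,b)$ if $c_{i+1}=-1$.

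The plan is then to delete $\beta$ using nothing but the type (1) contractions of Theorem \ref{thm:pi1}(3). In both cases the interior $0$ of $\beta$ (the one flanked by $b$ and $-b$) can be contracted against those two neighbours, merging them to $b+(-b)=0$ and leaving a string of the shape $\ldots,x,0,0,y,\ldots$ with $x$ the last entry of $\gamma_1$ and $y$ the first of $\gamma_2$. A second type (1) contraction then absorbs this pair of zeros — merge $x$ with the following $0$, or, if $\gamma_1$ is empty, merge the first $0$ with $y$ — and the string produced is precisely $\gamma_1\gamma_2$, the string defining $\tilde{\Theta}(\alpha')$. Reading off the sequence of relations from Theorem \ref{thm:pi1}(3), this gives $\tilde{\Theta}(\alpha)=\tilde{\Theta}(\alpha')$.

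The main obstacle is combinatorial rather than conceptual: I would need to enumerate the degenerate cases in which $\gamma_1$ and/or $\gamma_2$ is empty, so that $\beta$ abuts an end of the whole string and the interior-$0$ form — which requires a left neighbour — does not literally apply to one of the two zeros. In those cases one falls back on the trailing-$0$ form $\an{X,(a_1,\ldots,a_{n-1},0)}=\an{X,(a_1,\ldots,a_{n-2})}$, which is legitimate here because the entry preceding the trailing $0$ is itself $0$, and, in the extreme case where the entire string collapses, on the relation $\an{X}=1$. I expect this case-checking to be the bulk of the write-up even though none of it is difficult; morally the statement is just that the words $(b,0,-b,0)$ and $(0,-b,0,b)$ are relators already trivialized by the type (1) relations.
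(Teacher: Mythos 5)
Your argument is correct and is essentially the paper's proof: both reduce the claim to deleting the inserted block $s_{c_{i+1}}(b)s_{c_i}(b)$ by first contracting the interior $\pm b,0,\mp b$ to $0$ and then absorbing the resulting pair of zeros, using only type (1) contractions. The paper writes out only the case $c_i=1$, $c_{i+1}=-1$ with generic neighbours and does not separately treat the boundary cases you flag; your extra checks (the strings remain valid symbols, the degenerate positions fall back on the trailing-$0$ relation and $\an{X}=1$) are harmless refinements of the same route.
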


\begin{proof} We consider the case $c_i=1,c_{i+1}=-1$. The other case is entirely similar.
\begin{eqnarray*}
\tilde{\Theta}(\alpha)&=&\an{1,(\ldots, s_{c_{i+2}}(b_{i+2}), 0,\underbrace{-b_{i},0,b_i},s_{c_{i-1}}(b_{i-1}),\ldots)}\\
&=& \an{1,(\ldots, s_{c_{i+2}}(b_{i+2}),\underbrace{ 0,0,s_{c_{i-1}}(b_{i-1})},\ldots)}\\
&=& \an{1,(\ldots, s_{c_{i+2}}(b_{i+2}),s_{c_{i-1}}(b_{i-1}),\ldots)}=\tilde{\Theta}(\alpha')\\
\end{eqnarray*}
using type (1) contractions in each of the last two lines.
\end{proof}

$\tilde{\Theta}$ defines an anti-homomorphism of groups:
\begin{lem}\label{lem:thetaprod}
 For all $\alpha,\beta\in \tilde{U}(A)$ we have $\tilde{\Theta}(\alpha\beta)=\tilde{\Theta}(\beta)\tilde{\Theta}(\alpha)$.
\end{lem}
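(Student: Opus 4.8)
The plan is to reduce the statement to relation (1) of Theorem~\ref{thm:pi1} by checking that the ``connection matrix'' appearing there is trivial in the situation at hand.

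First I would set up notation: for a word $\alpha=\prod_{i=1}^{n}\tilde{\epsilon}(b_i)^{c_i}$ in $\tilde{C}(A)$, write $\sigma(\alpha)$ for the concatenated string $(s_{c_n}(b_n),\ldots,s_{c_1}(b_1))$, so that $\tilde{\Theta}(\alpha)=\an{1,\sigma(\alpha)}$ by definition. The one computation the whole argument rests on is that the product of elementary matrices read off from $\sigma(\alpha)$ recovers $\tilde{\psi}(\alpha)$. This follows from the elementary $2\times 2$ identity $E(0)E(-b)E(0)=E(b)^{-1}$: the block $s_{-1}(b)=(0,-b,0)$ then contributes $E(b)^{-1}$ and the block $s_1(b)=(b)$ contributes $E(b)$, so if $\sigma(\alpha)=(a_1,\ldots,a_m)$ then $E(a_m)\cdots E(a_1)=\prod_{i=1}^{n}E(b_i)^{c_i}=\tilde{\psi}(\alpha)$. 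In particular, when $\alpha\in\tilde{U}(A)$ this product is $1\in B$, which simultaneously confirms that $\an{1,\sigma(\alpha)}$ is a legitimate generator of $\pi_1(Y(A),\infty)$ in the sense of Theorem~\ref{thm:pi1} and records the key fact that $I(\an{1,\sigma(\alpha)})=T(\an{1,\sigma(\alpha)})=1$.

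Now, given $\alpha=\prod_{i=1}^{n}\tilde{\epsilon}(b_i)^{c_i}$ and $\beta=\prod_{j=1}^{k}\tilde{\epsilon}(d_j)^{e_j}$ in $\tilde{U}(A)$, form $\alpha\beta$ by juxtaposing the words; then $\sigma(\alpha\beta)$ is precisely the concatenation of $\sigma(\beta)$ followed by $\sigma(\alpha)$, so $\tilde{\Theta}(\alpha\beta)=\an{1,\ \sigma(\beta)\sigma(\alpha)}$. On the other hand I would compute $\tilde{\Theta}(\beta)\tilde{\Theta}(\alpha)=\an{1,\sigma(\beta)}\cdot\an{1,\sigma(\alpha)}$ using relation (1) of Theorem~\ref{thm:pi1}, with $p=\an{1,\sigma(\beta)}$ and $q=\an{1,\sigma(\alpha)}$: the connection matrix is $Z_{p,q}=I(q)T(p)^{-1}=1\cdot 1^{-1}=1$, since $I(q)=1$ and $T(p)=\tilde{\psi}(\beta)=1$ by the previous paragraph. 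Hence $u(Z_{p,q})=1$, every modified entry $b_j'$ in relation (1) equals the original $b_j$, and that relation collapses to plain concatenation of strings, yielding $\tilde{\Theta}(\beta)\tilde{\Theta}(\alpha)=\an{1,\ \sigma(\beta)\sigma(\alpha)}=\tilde{\Theta}(\alpha\beta)$.

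There is no serious obstacle here; the only point needing care is the identity $E(0)E(-b)E(0)=E(b)^{-1}$ and its consequence $T(\an{1,\sigma(\beta)})=\tilde{\psi}(\beta)=1$, which is exactly what forces the connection matrix to be the identity and degenerates relation (1) to concatenation. (Well-definedness of $\tilde{\Theta}$ on $\tilde{U}(A)$ is already Lemma~\ref{lem:thetadef}, so we may freely use any word representatives for $\alpha$ and $\beta$.)
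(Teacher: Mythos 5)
Your proof is correct and follows essentially the same route as the paper: the paper simply writes the product of the two generators as the concatenated string, leaving implicit exactly the point you verify, namely that $T(\an{1,\sigma(\beta)})=\tilde{\psi}(\beta)=1$ forces the connection matrix in relation (1) of Theorem~\ref{thm:pi1} to be the identity so that the product degenerates to concatenation. Your explicit check of $E(0)E(-b)E(0)=E(b)^{-1}$ and the resulting identity $E(a_m)\cdots E(a_1)=\tilde{\psi}(\alpha)$ is a worthwhile filling-in of that implicit step.
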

\begin{proof}  Let $\alpha=\prod_{i=1}^n\tilde{\epsilon}(b_i)^{c_i}$ and let $\beta=\prod_{i=1}^n\tilde{\epsilon}(d_i)^{e_i}$. Then 
\begin{eqnarray*}
\tilde{\Theta}(\beta)\tilde{\Theta}(\alpha)&=&\an{1,(s_{e_n}(d_n),\ldots,s_{e_1}(d_1))}\an{1,(s_{c_n}(b_n),\ldots,s_{c_1}(b_1))}\\
&=&\an{1,(s_{e_n}(d_n),\ldots,s_{e_1}(d_1),s_{c_n}(b_n),\ldots,s_{c_1}(b_1))}=\tilde{\Theta}(\alpha\beta).\\
\end{eqnarray*}
\end{proof}

Now $U(A)$ is isomorphic to $\tilde{U}(A)$ modulo the normal subgroup generated by the following three families of elements: 

 For $u\in A^\times$, let $\tilde{h}(u):=\tilde{\epsilon}(-u)\tilde{\epsilon}(-u^{-1})\tilde{\epsilon}(-u)$.
\begin{enumerate} 
\item For $u,v\in A^\times$
\[
\alpha(u,v):=\tilde{h}(u)\tilde{h}(u)\tilde{h}(uv)^{-1}
\]
\item For $a,b\in A$
\[
\gamma(a,b):= \tilde{\epsilon}(a)\tilde{\epsilon}(0)\tilde{\epsilon}(b)\tilde{\epsilon}(a+b)^{-1}\tilde{h}(-1)^{-1}.
\]
\item For $u\in A^\times$, $a\in A$
\[
\delta(u,a):=\tilde{h}(u)\tilde{\epsilon}(a)\tilde{h}(u)\tilde{\epsilon}(u^2a)^{-1}.
\]
\end{enumerate}

\begin{prop}\label{prop:theta} 
$\tilde{\Theta}$ induces a well-defined anti-homomorphism 
\[
\Theta:U(A)\to \pi_1(Y(A),\infty).
\]
\end{prop}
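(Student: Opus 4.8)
The plan is to exploit what has already been established: $\tilde{\Theta}$ is a well-defined anti-homomorphism from $\tilde{U}(A)$ to $\pi_1(Y(A),\infty)$ (Lemmas \ref{lem:thetadef} and \ref{lem:thetaprod}), while $U(A)$ is the quotient of $\tilde{U}(A)$ by the normal subgroup $N$ generated by the three families $\alpha(u,v)$, $\gamma(a,b)$, $\delta(u,a)$. Since the kernel of an anti-homomorphism is itself a normal subgroup, it suffices to show that $\tilde{\Theta}$ sends each of $\alpha(u,v)$, $\gamma(a,b)$ and $\delta(u,a)$ to the trivial element of $\pi_1(Y(A),\infty)$. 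Once this is done, $N\subseteq\ker{\tilde{\Theta}}$, so $\tilde{\Theta}$ factors through $U(A)=\tilde{U}(A)/N$, giving the required $\Theta$, which inherits the anti-homomorphism property from $\tilde{\Theta}$.

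The computational engine is one observation that I would isolate first. For $u\in A^\times$ the symbol $\an{1,(-u,-u^{-1},-u)}$ is legitimate, since $E(-u)E(-u^{-1})E(-u)=D(u)\in B$, and it represents the loop $\infty\to(-u)_+\to 0\to\infty$ in $\Gamma(A)$. Because $\{\infty,0,(-u)_+\}$ is a $3$-clique of $\Gamma(A)$ (Corollary \ref{cor:3clique}), hence a $2$-simplex of $Y(A)$, this loop is null-homotopic; concretely, a type (2) contraction turns $(-u,-u^{-1},-u)$ into $(-u,0)$ and then a type (1) contraction yields $\an{1}=1$ (relations (4), (3), (2) of Theorem \ref{thm:pi1}). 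The same routine shows that a block of the form $(0,w,0,0,w^{-1},0,0,w,0)$ collapses to $1$, being the triangle loop through $\infty$, $0$ and $(-w^{-1})_+$, and likewise that the string $(0,-(a+b),0,b,0,a)$ collapses to $1$ by type (1) contractions alone, being the loop $\infty\to a_+\to\infty\to (a+b)_+\to\infty\to 0\to\infty$ (here one uses $E(a)E(0)E(b)=D(-1)E(a+b)$).

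For each relator I would expand the $\tilde{h}$'s into $\tilde{\epsilon}$'s, using $\tilde{h}(u)=\tilde{\epsilon}(-u)\tilde{\epsilon}(-u^{-1})\tilde{\epsilon}(-u)$ and $\tilde{h}(-1)=\tilde{\epsilon}(1)^3$, read off the finite string defining $\tilde{\Theta}(\cdot)$ via $\tilde{\Theta}(\prod_i\tilde{\epsilon}(b_i)^{c_i})=\an{1,(s_{c_n}(b_n),\ldots,s_{c_1}(b_1))}$, and collapse it to $\an{1}=1$ using relation (1) of Theorem \ref{thm:pi1} to split the string into blocks and relations (3), (4) within the blocks. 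Concretely: $\tilde{\Theta}(\delta(u,a))=\an{1,(0,-u^2a,0,\,-u,-u^{-1},-u,\,a,\,-u,-u^{-1},-u)}$, which successive type (2)/(1) contractions reduce to $\an{1}$; $\tilde{\Theta}(\alpha(u,v))=\an{1,(0,uv,0,0,(uv)^{-1},0,0,uv,0,\,-v,-v^{-1},-v,\,-u,-u^{-1},-u)}$, where the trailing $(-u,-u^{-1},-u)$ and then $(-v,-v^{-1},-v)$ are erased by end-of-string type (2)/(1) contractions, leaving the triangle block $(0,uv,0,0,(uv)^{-1},0,0,uv,0)$; and $\tilde{\Theta}(\gamma(a,b))=\an{1,(0,-1,0,0,-1,0,0,-1,0,\,0,-(a+b),0,\,b,\,0,\,a)}$, which relation (1) splits (the connection matrix being $D(-1)$, so the rescaling factors $(-1)^{\pm2}$ are trivial) into the null-homotopic block coming from $\tilde{h}(-1)^{-1}$ and the block $(0,-(a+b),0,b,0,a)$ handled above.

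The string manipulations are routine; the one point requiring vigilance --- and the main obstacle --- is that each type (1) contraction multiplies the terminal matrix $T(p)$ by $D(-1)$ and each type (2) contraction with unit $u$ multiplies it by $D(u)^{\pm1}$ while rescaling the tail of the string by the alternating factors $u^{\pm2}$. So at every step one must check that the resulting symbol is still a legitimate generator, i.e.\ that $T(p)\in B$, and propagate the rescaling correctly. Keeping a running record of $T(p)$ through each reduction, starting from $T(p)=\tilde{\psi}(\text{relator})=1$, is what makes the three computations unambiguous and completes the proof.
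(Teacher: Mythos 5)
Your proposal is correct and follows essentially the same route as the paper: reduce to checking that $\tilde{\Theta}$ kills the three relator families $\alpha(u,v)$, $\gamma(a,b)$, $\delta(u,a)$, write out exactly the same three strings, and collapse them to $\an{1}$ by type (1) and type (2) contractions. The geometric gloss (identifying the blocks with null-homotopic triangle loops) and the explicit warning about tracking $T(p)$ and the $u^{\pm 2}$ rescaling are welcome additions but do not change the argument.
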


\begin{proof} We must show that $\tilde{\Theta}$ vanishes on each of the three families  -- $\alpha$, $\gamma$ ,$\delta$ -- of elements of $\tilde{U}(A)$.
\begin{enumerate}
\item Let $u,v\in A^\times$. Then $\tilde{\Theta}(\alpha(u,v))=$
\begin{eqnarray*}
 \an{1,(0,uv,0,0,(uv)^{-1},0,0,uv,0,-v,-v^{-1},-v,-u,-u^{-1},-u)}.
\end{eqnarray*}
Since a type (1) contraction replaces the string $0,0,a$ with $a$, this is equal to
\[
 \an{1,(0,uv,(uv)^{-1},uv,0,-v,-v^{-1},-v,-u,-u^{-1},-u)}.
\]
Since a type (2) contraction replaces the string $u,u^{-1},u$ with $0,0$ (and multiplies terms further to the right by powers of $u^2$), making three such contractions,
 starting on the right for convenience, gives the element
\[
\an{1,(0,0,0,0,0,0,0,0)}
\]
which is trivial by further type (1) contractions. 
\item Let $a,b\in A$. Then $\tilde{\Theta}(\gamma(a,b))=$
\[
\an{1,(0,-1,0,0,-1,0,0,-1,0,0,-(a+b),0,b,0,a)}.
\]
Replacing $0,0,x$ with $x$ in three places, this is equal to
\[
\an{1,(0,-1,-1,-1,-(a+b),0,b,0,a)}.
\]
A type (2) contraction replaces $-1,-1,-1$ with $0,0$, and leaves terms to the right unaltered  since $(-1)^2=1$). So our element becomes
$\an{1,(0,0,0,-(a+b),0,b,0,a)}$, which is again trivial by a sequence of type (1) contractions. 
\item Let $u\in A^\times$, $a\in A$.  Then $\tilde{\Theta}(\delta(u,a))=$
\[
\an{1,(0,-u^2a,0,-u,-u^{-1},-u,a,-u,-u^{-1},-u)}.
\]
Applying a type (2) contraction to the last three terms, this is equal to \[
\an{1,(0,-u^2a,0,-u,-u^{-1},-u,a,0,0)}. 
\]
Applying a further type (2) contraction, this becomes $\an{1,(0,-u^2a,0,0,0,u^2a,0,0)}$, which 
is trivial by a sequence of type (1) contractions.
\end{enumerate}
\end{proof}

\begin{thm}\label{thm:main} Let $A$ be a ring. The anti-homomophisms $\Lambda:U(A)\to\pi_1(Y(A),\infty)$ and $\Theta:\pi_1(Y(A),\infty)\to U(A)$ are inverse to each other, and hence
\[
\pi_1(Y(A),\infty)\cong U(A)\cong \frac{K_2(2,A)}{C(2,A)}.
\]
\end{thm}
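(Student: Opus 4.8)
The plan is to deduce the isomorphism $\pi_1(Y(A),\infty)\cong U(A)$ by showing that the two anti-homomorphisms already in hand, $\Lambda\colon\pi_1(Y(A),\infty)\to U(A)$ and $\Theta\colon U(A)\to\pi_1(Y(A),\infty)$, are mutually inverse; the further identification $U(A)\cong K_2(2,A)/C(2,A)$ is Theorem \ref{thm:gamma} of the appendix, so it requires nothing additional here. Since both maps are anti-homomorphisms of groups, it suffices to check, as maps of sets, that $\Lambda\circ\Theta=\mathrm{Id}_{U(A)}$ and $\Theta\circ\Lambda=\mathrm{Id}_{\pi_1(Y(A),\infty)}$. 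From $\Lambda\circ\Theta=\Theta\circ\Lambda=\mathrm{Id}$ it then follows that each is a bijection and that $\Lambda=\Theta^{-1}$.

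\emph{The composite $\Lambda\circ\Theta$.} Given $\alpha=\tilde\epsilon(b_1)^{c_1}\cdots\tilde\epsilon(b_n)^{c_n}\in\tilde U(A)$, I would compute $\Lambda(\tilde\Theta(\alpha))$ directly, where $\tilde\Theta(\alpha)=\an{1,(a_1,\ldots,a_N)}$ with $(a_1,\ldots,a_N)=(s_{c_n}(b_n),\ldots,s_{c_1}(b_1))$. The key observation is that this symbol has \emph{both} endpoints equal to the identity: since $s_{-1}(b)=(0,-b,0)$ and $E(b)^{-1}=E(0)E(-b)E(0)$ in $\spl{2}{A}$, the block of the string coming from $s_{c_i}(b_i)$ contributes the single factor $E(b_i)^{c_i}$ to $T(p)=E(a_N)\cdots E(a_1)$, and these blocks occur there in the order $i=1,\dots,n$ from left to right, so $T(p)=E(b_1)^{c_1}\cdots E(b_n)^{c_n}=\tilde\psi(\alpha)=1=I(p)$. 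Hence the two $\mathrm{st}$-factors in the formula for $\Lambda$ drop out and $\Lambda(\tilde\Theta(\alpha))=\epsilon(a_N)\cdots\epsilon(a_1)$ in $C(A)$. Running the identical block bookkeeping inside $C(A)$, this time with $\epsilon(b)^{-1}=\epsilon(0)\epsilon(-b)\epsilon(0)$, turns this product back into $\epsilon(b_1)^{c_1}\cdots\epsilon(b_n)^{c_n}$, which is precisely the image of $\alpha$ under $\tilde C(A)\to C(A)$, i.e. the class $[\alpha]\in U(A)$. So this direction reduces to one careful bookkeeping computation.

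\emph{The composite $\Theta\circ\Lambda$.} Here the central step is a normal form: every class in $\pi_1(Y(A),\infty)$ is represented by a symbol $\an{1,(d_1,\ldots,d_m)}$ with $E(d_m)\cdots E(d_1)=1$. Starting from an arbitrary generator $\an{X,(a_1,\ldots,a_n)}$, $X\in B$, I would first apply relation (1) of Theorem \ref{thm:pi1} to the product $\an{1}\cdot\an{X,(a_1,\ldots,a_n)}$, together with relation (2) ($\an{1}=1$), to replace $X$ by $1$ (this is exactly the content of Proposition \ref{prop:changeX}). Then, writing $Z_0:=E(a_n)\cdots E(a_1)\in B$ and expressing $Z_0^{-1}$ as a product $G_r\cdots G_1$ of matrices $D(u)$ and $E_{2,1}(c)$ (which generate $B$), I would append to the word the corresponding ``standard'' words $w_j=(-u,-u^{-1},-u)$ when $G_j=D(u)$ and $w_j=(c,0,0,0)$ when $G_j=E_{2,1}(c)$, chosen so that the left-multiplier matrix of $w_j$ is $G_j$. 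Each such standard word, based at any $Z\in B$, yields a symbol trivial in $\pi_1(Y(A),\infty)$: one has $\an{Z,(-u,-u^{-1},-u)}=\an{Z,(-u,0)}=\an{Z}=1$ by one type (2) then one type (1) contraction, and $\an{Z,(c,0,0,0)}=\an{Z,(c,0)}=\an{Z}=1$ by type (1) contractions. Appending $W:=w_1\cdots w_r$ therefore leaves the class unchanged (all connection matrices arising from relation (1) are trivial because every relevant $I$ and $T$ lies in $B$ and cancels) while replacing the terminal matrix $Z_0$ by $G_r\cdots G_1Z_0=1$. Finally, on a normal-form representative $\an{1,(d_1,\ldots,d_m)}$ one has $\Lambda(\an{1,(d_1,\ldots,d_m)})=\epsilon(d_m)\cdots\epsilon(d_1)$, whose evident lift $\tilde\epsilon(d_m)\cdots\tilde\epsilon(d_1)$ lies in $\tilde U(A)$ (its $\tilde\psi$-image is $E(d_m)\cdots E(d_1)=1$) and is carried by $\tilde\Theta$ back to $\an{1,(d_1,\ldots,d_m)}$; hence $\Theta\circ\Lambda=\mathrm{Id}$.

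The step I expect to be the main obstacle is the normal-form reduction in the $\Theta\circ\Lambda$ argument: carrying out, purely inside the presented group of Theorem \ref{thm:pi1} and using only relations (1)--(4), both the ``change of base matrix'' move and the appending of standard $B$-words, and verifying that these standard words collapse to $\an{Z}=1$. Once this is in place, the remaining ingredients---the two block-bookkeeping computations and the tracking of connection matrices---are routine, given the machinery already developed for the edge-path groupoid and for the group $C(A)$.
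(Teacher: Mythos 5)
Your proposal is correct and follows essentially the same strategy as the paper: verify directly that the two anti-homomorphisms are mutually inverse by reducing every class in $\pi_1(Y(A),\infty)$ to a representative $\an{1,(d_1,\ldots,d_m)}$ and every class in $U(A)$ to an explicit word in the $\epsilon(a)$'s, then observe that $\Lambda$ and $\Theta$ simply transcribe these words into one another. The one place you go beyond the paper's own (very terse) argument is the second normalization step for $\Theta\circ\Lambda$: the paper only uses $p=\an{1}\cdot p$ to force $I(p)=1$ and then writes $\Lambda(p)=\epsilon(b_n)\cdots\epsilon(b_1)$, silently discarding the factor $\stand{T(p)}^{-1}$ with $T(p)\in B$ not necessarily trivial; your appending of the standard $B$-words $(-u,-u^{-1},-u)$ and $(c,0,0,0)$ (each collapsing to $\an{Z}=1$ by type (2) and type (1) contractions) genuinely reduces the terminal matrix to the identity and makes that computation literal. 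This extra step is sound --- the connection matrices are all trivial as you say, and the contractions check out --- so your version is, if anything, the more complete one.
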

\begin{proof}
 Let $p=\an{X,(a_1,\ldots,a_n)}\in \pi_1(Y(A),\infty)$. Then $p=\an{1}\cdot p =\an{1,(b_1,\ldots,b_n)}$ for some $b_i\in A$. Thus 
$\Lambda(p)=\epsilon(b_n)\cdots \epsilon(b_1)$ and hence $\Theta(\Lambda(p))=\an{1,(b_1,\ldots,b_n)}=p$.

Let $\alpha\in U(A)$. Since $\epsilon(a)^{-1}=\epsilon(0)\epsilon(-a)\epsilon(0)$ in $U(A)$, we can assume without loss that $\alpha=\prod_{i=1}^n\epsilon(b_i)$ for some 
$b_1,\ldots,b_n\in A$. Then $\Theta(\alpha)=\an{1,(b_n,\ldots, b_1)}\in \pi_1(Y(A),\infty)$ and hence $\Lambda(\Theta(\alpha))=\prod_{i=1}^n\epsilon(b_i)=\alpha$, as required.
Thus $\Theta$ defines an anti-isomorphism, and composing with the inversion involution thus gives an isomorphism.

The second stated isomorphism is Theorem \ref{thm:gamma} in Appendix \ref{sec:k2} below.
\end{proof}

Combining this with Proposition \ref{prop:euclid}  (and recalling that  $\mathrm{GL}_2(A)$ acts transitively on the set of path components of $|Y(A)|$) we deduce:
\begin{cor}\label{cor:main} Let $A$ be a ring. Then $A$ is  universal for $\mathrm{GE}_2$ if and only if any path component of the space $|Y(A)|$ is simply-connected. 
\end{cor}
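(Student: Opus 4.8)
The plan is to deduce this immediately from Theorem \ref{thm:main} together with the description of path components from Section \ref{sec:path}. First I would recall that, by definition, $A$ is universal for $\mathrm{GE}_2$ precisely when $U(A)=\{1\}$, and that by Theorem \ref{thm:main} there is an isomorphism $\pi_1(Y(A),\infty)\cong U(A)$. Hence $A$ is universal for $\mathrm{GE}_2$ if and only if $\pi_1(Y(A),\infty)$ is trivial, i.e.\ if and only if the path component of $|Y(A)|$ containing the vertex $\infty$ is simply connected (since the fundamental group of a space based at a point depends only on the path component of that point).

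It then remains to upgrade ``the component of $\infty$ is simply connected'' to ``every path component is simply connected''. For this I would use that $\mathrm{GL}_2(A)$ acts on the graph $\Gamma(A)$ by graph automorphisms (right multiplication sends unimodular rows to unimodular rows and preserves the edge relation, since the determinant is multiplicative), hence acts on the clique complex $Y(A)$ by simplicial automorphisms, and therefore on $|Y(A)|$ by homeomorphisms. By Theorem \ref{thm:pi0} (equivalently Corollary \ref{cor:pi0}) this action is transitive on the set of path components of $|Y(A)|$. Consequently each path component of $|Y(A)|$ is homeomorphic, via a suitable element of $\mathrm{GL}_2(A)$, to the component of $\infty$, so all of them are simply connected exactly when that one is.

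Combining the two steps yields the stated equivalence. I do not expect any genuine obstacle here: the substantive work has already been carried out in Theorems \ref{thm:pi0} and \ref{thm:main}. The only point requiring (minor) care is to observe that the group computed in Theorem \ref{thm:main} is the fundamental group of the path component of the basepoint $\infty$, and then to transport simple-connectedness to the remaining components by the transitive action of $\mathrm{GL}_2(A)$ by homeomorphisms, rather than attempting to recompute $\pi_1$ at a different basepoint.
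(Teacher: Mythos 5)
Your proposal is correct and follows essentially the same route as the paper: the paper likewise deduces the corollary from Theorem \ref{thm:main} (giving $\pi_1(Y(A),\infty)\cong U(A)$, which is trivial exactly when $A$ is universal for $\mathrm{GE}_2$) together with the transitivity of the $\gl{2}{A}$-action on the set of path components of $|Y(A)|$ to transport simple-connectedness from the component of $\infty$ to all components. No gaps.
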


\subsection{Some examples}
\begin{exa} \label{exa:pi1f}
For any field $F$, the space $|Y(F)|$ is contractible (see Example \ref{exa:field} above) and hence $\pi_1(Y(F),\infty)=1$. The well-known fact that $K_2(2,F)$ is generated by symbols and the resulting presentation of $\spl{2}{F}$  follow from this.
\end{exa}

\begin{exa} \label{exa:pi1z}
The space $|Y(\Z)|$ is contractible (see Section \ref{sec:gammaz} above) and hence $\pi_1(Y(\Z),\infty)=1$. 
Thus $K_2(2,\Z)$ is generated by symbols. The only possibly non-trivial symbol is $c(-1,-1)$. Thus we re-derive the well-known fact that $K_2(2,\Z)$ is cyclic and generated by $c(-1,-1)$.
(In fact, $c(-1,-1)\in K_2(2,\Z)$ has infinite order.)
\end{exa}

\begin{exa}\label{exa:pi123}
 The calculations of Morita \cite{morita:k2zs} show that for  the Euclidean domains $A=\Z[\frac{1}{2}]$ or  $\Z[\frac{1}{3}]$, $K_2(2,A)$ is generated by symbols.
\end{exa}

\begin{exa}\label{exa:pi1m}
Furthermore, by \cite[Proposition 2.13]{morita:k2zs}, if $A$ is a Dedekind domain and if $K_2(2,A)$ is generated by symbols, then the same is true of $K_2(2,A[\frac{1}{\pi}])$ for any 
prime element $\pi$ of $A$ for which the homomorphism $D^\times\to (D/\an{\pi})^\times$ is surjective. 

It follows that $K_2(2,\Z[\frac{1}{m}])$ is generated by symbols whenever $m$ can be expressed as a product of primes $m=p_1^{a_1}\cdots p_t^{a_t}$ ($a_i\geq 1$) with the property
that $(\Z/p_i)^\times$ is generated by the residue classes $\{ -1, p_1,\ldots,p_{i-1}\}$ for all $i\leq t$. (In particular, $p_1\in\{ 2,3\}$).
\end{exa}

When $p\geq 5$ is a prime, however, the situation is quite different. The following result, as we explain in the proof, is essentially due to J. Morita (\cite{morita:braid},\cite{morita:mab}). 

\begin{lem}\label{lem:morita}  Let $p\geq 5$. Then $K_2(2, \Z[\frac{1}{p}])\not=C(2,\Z[\frac{1}{p}])$. 
More precisely, 
write $p=6k+\epsilon$ where $\epsilon\in \{ \pm1\}$. Furthermore, let $k=2^{\ell}m$ where $m$ is odd. Then the Dennis-Stein symbol
 $D(a,b):=D(-\epsilon\cdot 2^{\ell+1},3m)\in K_2(2,\Z[\frac{1}{p}])$ represents an element of infinite order in $K_2(2,\Z[\frac{1}{p}])/C(2,\Z[\frac{1}{p}])$.

 In fact, $D(a,b)$  represents an element of infinite order in $\left(K_2(2,\Z[\frac{1}{p}])/C(2,\Z[\frac{1}{p}])\right)^{\mathrm{ab}}$.
\end{lem}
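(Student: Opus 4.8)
The plan is to reduce the statement to a concrete computation that Morita has already carried out. First I would recall that, by Theorem \ref{thm:main}, there is an isomorphism $\pi_1(Y(A),\infty)\cong U(A)\cong K_2(2,A)/C(2,A)$, and that $H_1(Y(A))\cong\pi_1(Y(A),\infty)^{\mathrm{ab}}$, so the abelianized quotient $\left(K_2(2,A)/C(2,A)\right)^{\mathrm{ab}}$ is identified with $H_1(L_\bullet(A))$ via the map of complexes $L_\bullet(A)\to C_\bullet(Y(A))$ mentioned in the introduction. This means it suffices to exhibit, for $A=\Z[\frac1p]$, an explicit $1$-cycle in $L_1(A)$ (equivalently, a short loop in $\Gamma(A)$ based at $\infty$) representing the class of the Dennis--Stein symbol $D(a,b)$, and to show that this class has infinite order. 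The translation between Dennis--Stein symbols and loops in $\Gamma(A)$ goes through the standard identification of $D(a,b)$ with a word in the $\epsilon(c)$'s lying in $U(A)$ (using $\epsilon(0)^2=h(-1)$, the formula $\epsilon(a)^{-1}=\epsilon(0)\epsilon(-a)\epsilon(0)$, and the defining relations of $C(A)$); applying $\tilde\Theta$ then produces an explicit element $\an{1,(\ldots)}$ of $\pi_1(Y(A),\infty)$.

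Next I would unwind the arithmetic choice. Writing $p=6k+\epsilon$ with $\epsilon=\pm1$ and $k=2^\ell m$ with $m$ odd, one checks that $-\epsilon\cdot 2^{\ell+1}$ and $3m$ together with a suitable third element form a unimodular configuration in $\Z[\frac1p]$: indeed $\epsilon\cdot 2^{\ell+1}\cdot 3m = \epsilon\cdot 6k = \epsilon p - 1$, so that $1 + a b$ with $a=-\epsilon\cdot2^{\ell+1}$, $b=3m$ equals $1-(\epsilon p-1)=2-\epsilon p$, and more to the point the Dennis--Stein relation $D(a,b)$ is well-defined precisely because the relevant $2\times2$ matrix lies in $E_2(\Z[\frac1p])$. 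This is the bookkeeping that makes the specific symbol $D(-\epsilon\cdot2^{\ell+1},3m)$ land in $K_2(2,\Z[\frac1p])$ rather than merely in some larger group, and it is where the hypothesis $p\geq5$ (so $k\geq1$, and $p\neq2,3$ so that $2$ and $3$ are units) is used.

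The final and genuinely substantive step is to show the class has infinite order in $\left(K_2(2,\Z[\frac1p])/C(2,\Z[\frac1p])\right)^{\mathrm{ab}}$. For this I would invoke Morita's computation of $H_1$ of the relevant groups: in \cite{morita:mab} the abelianization of the relevant presented group (equivalently $M^{\mathrm{ab}}$ in Morita's notation, which by the braid-group-type presentation in \cite{morita:braid} is exactly $\left(\ab{C(2,\Z[\frac1p])\text{-quotient}}\right)$) is computed and shown to contain a free summand, with the image of $D(a,b)$ corresponding to a generator of that summand — so $D(a,b)$ maps to an element of infinite order. Concretely, the argument is: Morita writes down an explicit abelian-group-valued invariant (a homomorphism from $K_2(2,\Z[\frac1p])/C(2,\Z[\frac1p])$ to $\Z$, or to $\Q/\Z$-free quotient) and evaluates it on $D(a,b)$ to get a nonzero value; multiplicativity then forces infinite order. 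The main obstacle is precisely bridging Morita's notation and normalization with ours: one must match his generators and relations for $\spl2{\Z[\frac1p]}$ (or for the congruence/braid quotient) with the presentation of $C(A)$ and the group $U(A)$ used here, and verify that his distinguished element is the image of our $D(-\epsilon\cdot2^{\ell+1},3m)$ under the composite $U(A)\twoheadrightarrow U(A)^{\mathrm{ab}}\xrightarrow{\ \sim\ }\left(K_2(2,A)/C(2,A)\right)^{\mathrm{ab}}$. Everything else — the translation to loops in $\Gamma(A)$, the unimodularity check, the passage to abelianizations — is routine given the machinery already set up in Sections \ref{sec:univ}--\ref{sec:path} and the Appendix.
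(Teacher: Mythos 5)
Your overall strategy --- reduce everything to a computation of Morita's and then translate notation --- is the same strategy the paper follows, but the proposal stops exactly where the real work begins. The paper's proof does not merely ``invoke Morita's computation of an abelianization'': it passes to the \emph{meta-abelianization} $\mathrm{St}(2,\Z[\frac{1}{p}])^{\mathrm{mab}}=G/[[G,G],[G,G]]\cong M_p$ (the ordinary abelianization of $M_p$ is essentially cyclic of order dividing $p^2-1$ and would see nothing), and then carries out a structural analysis that your proposal flags as ``the main obstacle'' but never performs. Concretely, one must show that the symbol subgroup $C(2,\Z[\frac{1}{p}])$ has \emph{finite} image in $M_p$: since symbols are central in $\mathrm{St}(2,\Z[\frac{1}{p}])$, their images lie in $Z(M_p)=\an{\sigma^6}$, while Morita's result that $\ker{M_p\to \spl{2}{\Z[\frac{1}{p}]}^{\mathrm{mab}}}$ is contained in $M_p^0=\an{\tau_1,\tau_2,\sigma^{12}}$ forces the image of $C(2,\Z[\frac{1}{p}])$ into $M_p^0\cap\an{\sigma^6}=\an{\sigma^{12}}$, which is finite. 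Only then does one get a well-defined homomorphism $K_2(2,\Z[\frac{1}{p}])/C(2,\Z[\frac{1}{p}])\to M_p^0/\an{\sigma^{12}}\cong\Z\oplus\Z$ (factoring through the abelianization), on which Morita's evaluation of $D(a,b)$ is nonzero. Without this step you cannot conclude anything about the order of $D(a,b)$ \emph{modulo} $C(2,\Z[\frac{1}{p}])$, which is the entire content of the lemma.

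Two smaller points. First, your arithmetic check computes $1+ab=2-\epsilon p$, which is not the relevant quantity and is not a unit; the Dennis--Stein symbol requires $u=1-ab$ to be a unit, and here $ab=-\epsilon\cdot 6k=1-\epsilon p$, so $1-ab=\epsilon p\in \Z[\frac{1}{p}]^\times$. Second, the opening reduction to $H_1(L_\bullet(A))$ and explicit loops in $\Gamma(A)$ is logically backwards for this lemma: in the paper the lemma is proved directly inside the Steinberg/$K_2$ picture and the loop $L_{a,b}$ is a \emph{consequence} (Examples \ref{exa:pi15} and \ref{exa:pi1p}), not an ingredient; the passage to loops does nothing to establish infinite order.
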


\begin{proof}  Let $\mathrm{St}(2,\Z[\frac{1}{p}])$ denote the rank one Steinberg group (see the appendix). For a group $G$, let $G^{\mathrm{mab}}$ denote $G/G^{(2)}$, where 
$G^{(1)}:=[G,G]$ and $G^{(2)}:=[G^{(1)},G^{(1)}]$.  For $p\geq 5$, let $M_p$ denote the group given by the presentation 
\[
M_p=\an{\sigma,\tau_1,\tau_2\ |\ \sigma^{p^2-1}=[\tau_1,\tau_2]=1, \sigma\tau_1\sigma^{-1}=\tau_1\tau_2^{-1}, \sigma\tau_2\sigma^{-1}=\tau_1}
\]
Morita (\cite{morita:braid}) has shown that there is an isomorphism $\mathrm{St}(2,\Z[\frac{1}{p}])^{\mathrm{mab}}\cong M_p$. Now, by definition,  the subgroup of $M_p$ generated by 
$\tau_1$ and $\tau_2$ is isomorphic to $\Z\oplus \Z$ and the element $\sigma$ operates by conjugation as right multiplication by the matrix $
\left[
\begin{array}{cc}
1&1\\
-1&0\\
\end{array}
\right]=E(1)$. Thus $\sigma^{3}$ is multiplication by $-1$ and the element $\sigma^6$ is central in $M_p$. 
Since $\sigma$ has no fixed points in $\an{\tau_1,\tau_2}\cong \Z\oplus \Z$,  it follows that 
the centre, $Z(M_p)$, of $M_p$  is the cyclic group of order $(p^2-1)/6$ generated by $\sigma^6$.  However, in \cite[proof of Theorem 1]{morita:mab}, Morita shows that 
kernel of the composite homomorphism
\[
 M_p \cong \mathrm{St}\left(2,\Z\left[\frac{1}{p}\right]\right)^{\mathrm{mab}} \to \spl{2}{\Z\left[\frac{1}{p}\right]}^{\mathrm{mab}}
\]
is contained in the abelian group 
\[
M_p^0:=\an{\tau_1,\tau_2,\sigma^{12}} \cong \Z\oplus\Z\oplus \left(\Z/\left((p^2-1)/{12}\right)\right).
\]
Thus the image of the natural map $K_2(2,\Z[\frac{1}{p}])\to M_p$ lies in $M_p^0$. Any central element 
of $\mathrm{St}(2,\Z[\frac{1}{p}])$ must have image in $Z(M_p)=\an{\sigma^6}$. Thus the image of \\
 $C(2,\Z[\frac{1}{p}])\subset K_2(2,\Z[\frac{1}{p}])\cap Z(\mathrm{St}(2,\Z[\frac{1}{p}])$ in $M_p$ lies in $M_p^0\cap \an{\sigma^6}=\an{\sigma^{12}}$. 
It follows that there is a well-defined homomorphism 
\[
\frac{K_2\left(2,\Z\left[\frac{1}{p}\right]\right)}{C\left(2,\Z\left[\frac{1}{p}\right]\right)}\to
\left( \frac{K_2\left(2,\Z\left[\frac{1}{p}\right]\right)}{C\left(2,\Z\left[\frac{1}{p}\right]\right)}\right)^{\mathrm{ab}}\to 
\frac{M_p^0}{\an{\sigma^{12}}}\cong \an{\tau_1,\tau_2}\cong \Z\oplus \Z
\]
Finally, the calculations of Morita \cite[p74]{morita:mab} show that the image of $D(a,b)$ under this homomorphism is nontrivial (and hence has infinite order). 
\end{proof}
\begin{cor} For $p\geq 5$, $\Z[\frac{1}{p}]$ is a Euclidean domain which is not universal for $\mathrm{GE}_2$.
\end{cor}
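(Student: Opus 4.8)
The plan is to deduce this corollary from work already in place: the Euclideanity of $\Z[\frac{1}{p}]$ is classical, while the failure of universality for $\mathrm{GE}_2$ is exactly the content of Lemma \ref{lem:morita} once it is read through the dictionary set up in Section \ref{sec:univ} and the appendix.

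First I would record that $\Z[\frac{1}{p}]$ is a Euclidean domain. Being a localization of $\Z$ it is a PID, and a Euclidean function is given by $N(0):=0$ and $N(x):=|a|$ when $x=p^k a$ with $k\in\Z$ and $p\nmid a$: given nonzero $y$ and arbitrary $x$, write $x=p^i a$, $y=p^j b$ with $p\nmid a,b$; if $|b|=1$ then $y$ is a unit, and otherwise ordinary integer division furnishes $q_0\in\Z$ with $|a-q_0 b|<|b|$, whence $x-(p^{i-j}q_0)\,y=p^i(a-q_0 b)$ has $N$-value at most $|a-q_0 b|<|b|=N(y)$. (Alternatively one simply cites this standard fact, which has been used freely above.)

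Next I would recall the characterization of universality. By the definitions of $C(A)$ and $U(A)$ in Section \ref{sec:univ}, the ring $\Z[\frac{1}{p}]$ is universal for $\mathrm{GE}_2$ if and only if $U(\Z[\frac{1}{p}])=1$; and by Theorem \ref{thm:gamma} of the appendix there is a natural isomorphism $U(A)\cong K_2(2,A)/C(2,A)$ for every commutative ring $A$. Hence $\Z[\frac{1}{p}]$ is universal for $\mathrm{GE}_2$ precisely when $K_2(2,\Z[\frac{1}{p}])=C(2,\Z[\frac{1}{p}])$.

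But Lemma \ref{lem:morita} asserts exactly the opposite for $p\geq 5$: $K_2(2,\Z[\frac{1}{p}])\neq C(2,\Z[\frac{1}{p}])$, and in fact it pins down an explicit Dennis--Stein symbol of infinite order in the quotient. Therefore $\Z[\frac{1}{p}]$ is not universal for $\mathrm{GE}_2$, which completes the proof; equivalently, by Theorem \ref{thm:main} and Corollary \ref{cor:main}, $\pi_1(Y(\Z[\frac{1}{p}]),\infty)\cong K_2(2,\Z[\frac{1}{p}])/C(2,\Z[\frac{1}{p}])$ is nontrivial, so no path component of $|Y(\Z[\frac{1}{p}])|$ is simply connected. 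The substantive input is concentrated entirely in Lemma \ref{lem:morita} (which rests on Morita's computations in \cite{morita:braid} and \cite{morita:mab}) and in Theorem \ref{thm:gamma}; granting these, the corollary is a purely formal consequence, so there is no further obstacle to overcome here.
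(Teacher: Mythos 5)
Your proof is correct and follows the same route as the paper, which states this corollary without further argument immediately after Lemma \ref{lem:morita}: Euclideanity of $\Z[\frac{1}{p}]$ is standard, and non-universality is precisely the statement $K_2(2,\Z[\frac{1}{p}])\neq C(2,\Z[\frac{1}{p}])$ translated through Theorem \ref{thm:gamma}. Your explicit Euclidean function is a harmless extra detail; nothing further is needed.
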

\begin{exa}\label{exa:pi15}
Let $p\geq 5$ be a prime number.   Write $p=6k+\epsilon$ where $\epsilon\in \{ \pm1\}$. Let $k=2^{\ell}m$ where $m$ is odd. 
By Lemma \ref{lem:morita},  the Dennis-Stein symbol   $D(-\epsilon\cdot 2^{\ell+1},3m)$ corresponds to (the class of) a loop in $\pi_1(Y(\Z[\frac{1}{p}),\infty)$ which has infinite order.  
We give an explicit expression for this loop in the next section.
\end{exa}

\subsection{The loop in $Y(A)$ corresponding to a Dennis-Stein symbol}
Let $A$ be a ring and let $a,b\in A$ such that $u:=1-ab$ is a unit.  Then these determine the \emph{ Dennis-Stein symbol}
\[
D(a,b):=x_{21}(-bu^{-1})x_{12}(-a)x_{21}(b)x_{12}(au^{-1})h_{12}(u)^{-1}\in K_2(2,A).
\]
We now calculate the image of such an element under the composite map\\
 $K_2(2,A)\to K_2(2,A)/C(2,A)\to \pi_1(Y(A),\infty)$:

The map $\tilde{\alpha}$ (see Appendix \ref{sec:k2}) sends $D(a,b)$ to 
\[
y(-bu^{-1})\tilde{y}(-a)y(b)\tilde{y}(au^{-1})h(u^{-1})\in U(A),
\]
which, by definitions of the terms, is equal to
\[
\epsilon(0)^3\epsilon(-bu^{-1})\epsilon(a)\epsilon(0)^3\epsilon(0)^3\epsilon(b)\epsilon(-au^{-1})\epsilon(0)^3h(u^{-1}).
\]
Using the facts that $\epsilon(0)^2=h(-1)$, $h(-1)$ is central of order $2$ and $h(-1)h(u^{-1})=h(-u^{-1})$ in $C(A)$, this simplifies to:
\[
\epsilon(0)\epsilon(-bu^{-1})\epsilon(a)\epsilon(b)\epsilon(-au^{-1})\epsilon(0)h(-u^{-1}).
\]
The map $\Lambda$ in turn sends this to the element
\[
\an{1, (u^{-1},u,u^{-1},0,-au^{-1},b,a,-bu^{-1},0)}\in \pi_1(Y(A),\infty).
\]
Now applying a type (2) contraction to $u^{-1},u,u^{-1}\ldots$, this is in the same path homotopy class as
\[
\an{1, (0,0,0,-ua,u^{-2}b,u^2a,-bu^{-3},0)}.
\]
Applying two type (1) contractions, this is in the same class as $\an{1, (0,-ua,u^{-2}b,u^2a)}$.  The corresponding loop in $Y(A)$ is 
\begin{eqnarray*}
L_{a,b}&:=&(\infty,\infty\cdot E(0),\infty\cdot E(-ua)E(0),\infty\cdot E(u^{-2}b)E(-ua)E(0),\infty)\\
&=& (\infty,0,(ua)_-,(u^{-1}b)_+,\infty).
\end{eqnarray*}
When $A$ is an integral domain we thus have 
\[
L_{a,b}=(\infty,0,\frac{1}{ua},\frac{b}{u},\infty), 
\]
using the identification of Lemma \ref{lem:bezout}

Thus whenever $D(a,b)$ represents a nontivial element of $K_2(2,A)/C(2,A)$, the loop $L_{a,b}$ represents a nontrivial element of $\pi_1(Y(A),\infty)$ (and conversely).

\begin{exa}\label{exa:ds1} Let $p\geq 5$ and let $m,\ell$,$\epsilon$ be as in Example \ref{exa:pi15}. It follows that the loop
\[
L_{-\epsilon\cdot 2^{\ell+1},3m}=\left(\infty,0,-\frac{1}{2^{\ell+1}p},\frac{3m}{\epsilon p},\infty\right)
\]
represents an element of infinite order in $\pi_1(Y(\Z[\frac{1}{p}]),\infty)$.

For example, when $p=5$, we have $m=1$, $\ell=0$, $\epsilon=-1$ . The loop $(\infty,0,-\frac{1}{10},-\frac{3}{5},\infty)$ represents an element of infinite order 
in  $\pi_1(Y(\Z[\frac{1}{5}]),\infty)$.
\end{exa}
\section{Application: The complex $L_\bullet(A)$}
For a ring $A$, $L_\bullet(A)$ is the following complex of abelian groups. $L_n(A)$ is the free abelian group with basis $X_n(A)$ where 
$X_n(A):= \{ (x_0,\ldots, x_n))\in Y_0(A)^n\ |\ \{ x_0,\ldots,x_n\}\in Y_n(A)\}$; i.e., a basis of $L_n(A)$ consists of \emph{ordered} $(n+1)$-cliques of $\Gamma(A)$. The boundary map is the standard simplicial boundary
\[
d_n: L_n\to L_{n-1},\ d_n((x_0,\ldots, x_n)):=\sum_{i=0}^n(-1)^i(x_0,\ldots,\widehat{x_i},\ldots,x_n).
\]

More generally, for any simplicial complex $Y$ let us define the complex $L_\bullet(Y)$ as follows: $L_n(Y)$ is the free abelian group whose basis consists of ordered $n$-simplices 
of $\Gamma$, together with the above simplicial boundary map. Thus $L_\bullet(A)=L_\bullet(Y(A))$ in this terminology.
We wish  to compare this complex with the \emph{ordered chain complex} $\Delta_\bullet(Y)$, defined as follows: A basis of $\Delta_n(Y)$ consists of ordered 
$(n+1)$-tuples $(y_0,\ldots,y_n)$ where $\{ y_0,\ldots,y_n\}$ is a simplex of $Y$  dimension $\leq n$; i.e., we allow  \emph{degenerate simplices} for which $y_i=y_j$ for some  $i\not=j$. 
It is well known that $H_n(\Delta_\bullet(Y))\cong H_n(|Y|)$ for all $n\geq 0$ (Spanier \cite[Chapter 5]{spanier}\footnote{In \cite[Chapter 5, Section 3]{spanier} it is not clear from the definition that the complex $\Delta_\bullet(Y)$ includes degenerate simplices. However, they are certainly required in the proof of Theorem 5.3.6, which in turn is required implicitly in the proof, via acyclic models, that the homology groups of $\Delta_\bullet(Y)$ agree with  those of the more standard \emph{oriented chain complex}, $C_\bullet(Y)$, of $Y$, which computes $H_\bullet(|Y|)$. }). 

The obvious chain map $L_\bullet(Y)\to \Delta_\bullet(Y)$ does not generally induce an isomorphism on homology: Clearly $H_0(L_\bullet(Y))=H_0(\Delta_\bullet(Y))$ for any simplicial complex $Y$.
However, if, for example,  $Y$ is an $n$-simplex then $H_i(\Delta_\bullet(Y))=0$ for all 
$i>0$ but $H_{n}(L_\bullet(Y)) $ is a free group of rank $(n+1)!\left(\frac{1}{0!}-\frac{1}{1!}+\cdots +\frac{(-1)^{n+1}}{(n+1)!}\right)$.

As observed above in section 
\ref{sec:clique}, 
any edge in $\Gamma(A)$ is contained in $3$-clique; equivalently, any $1$-simplex in $Y(A)$ is contained in a $2$-simplex. 

\begin{lem}\label{lem:h1} Let $Y$ be a simplicial set with the property that each $1$-simplex is contained in a $2$-simplex. Then the chain map $L_\bullet(Y)\to \Delta_\bullet(Y)$ induces 
an isomorphism 
\[
H_1(L_\bullet(Y))\cong H_1(\Delta_\bullet(Y)).
\]
\end{lem}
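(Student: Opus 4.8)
The plan is to compare $L_\bullet(Y)$ and $\Delta_\bullet(Y)$ directly in low degrees. Since $\Delta_0(Y)=L_0(Y)$ and $\Delta_1(Y)$ is the direct sum of $L_1(Y)$ (ordered pairs of distinct vertices spanning an edge) and the free group $D_1$ on the degenerate pairs $(v,v)$, $v\in Y_0$, and since $d_1(v,v)=0$, the cycle group splits as $Z_1(\Delta_\bullet(Y))=Z_1(L_\bullet(Y))\oplus D_1$. Hence the induced map on $H_1$ is the one coming from the inclusion $Z_1(L_\bullet(Y))\hookrightarrow Z_1(\Delta_\bullet(Y))$; it is surjective as soon as $D_1\subseteq B_1(\Delta_\bullet(Y))$, and injective if and only if $Z_1(L_\bullet(Y))\cap B_1(\Delta_\bullet(Y))\subseteq B_1(L_\bullet(Y))$.

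For surjectivity I would simply note that each triple $(v,v,v)$ spans the $0$-simplex $\{v\}$, hence is a basis element of $\Delta_2(Y)$, and $d_2(v,v,v)=(v,v)$; so $D_1\subseteq B_1(\Delta_\bullet(Y))$, and every class in $H_1(\Delta_\bullet(Y))$ is represented by a cycle in $Z_1(L_\bullet(Y))$.

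For injectivity, suppose $z\in Z_1(L_\bullet(Y))$ and $z=d_2(c)$ with $c\in\Delta_2(Y)$. Split $c=c'+c''$, where $c'\in L_2(Y)$ is supported on triples of distinct vertices and $c''$ on triples with a repeated vertex; then $d_2(c'')=z-d_2(c')\in L_1(Y)$. The triples with a repeated vertex that span a simplex of $Y$ are $(a,a,b),(a,b,a),(b,a,a)$ with $\{a,b\}$ an edge, and $(a,a,a)$, and a short computation gives $d_2(a,a,b)=d_2(b,a,a)=d_2(a,a,a)=(a,a)$ while $d_2(a,b,a)=(a,b)+(b,a)-(a,a)$. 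Requiring $d_2(c'')$ to have no $D_1$-component forces the $(a,a)$ contributions to cancel, leaving $d_2(c'')$ equal to a $\Z$-linear combination of symmetrized edges $(a,b)+(b,a)$. The hypothesis on $Y$ now finishes the argument: for any edge $\{a,b\}$ choose a $2$-simplex $\{a,b,t\}$ of $Y$ containing it (so $t\notin\{a,b\}$); then $(a,b,t),(b,a,t)\in L_2(Y)$ and $d_2\bigl((a,b,t)+(b,a,t)\bigr)=(a,b)+(b,a)$. Hence $d_2(c'')\in B_1(L_\bullet(Y))$, so $z=d_2(c')+d_2(c'')\in B_1(L_\bullet(Y))$, and the map is injective.

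The only delicate point is the bookkeeping in the injectivity step: one must be sure the complete list of degenerate triples is as stated and that, after the forced cancellation of the $(a,a)$ terms, what remains is genuinely a combination of symmetrized edges — this is the heart of the lemma, and it is exactly there that the hypothesis enters; the rest is formal. (A slick alternative is to pass to the normalized ordered chain complex $N_\bullet(Y)=\Delta_\bullet(Y)/D_\bullet(Y)$, where $D_\bullet(Y)\subseteq\Delta_\bullet(Y)$ is the acyclic subcomplex spanned by tuples with two equal \emph{consecutive} entries: then $L_\bullet(Y)\to N_\bullet(Y)$ is an isomorphism in degrees $0$ and $1$, and one only needs that $d_2$ of the extra degenerate generator $(a,b,a)$, whose image in $N_1(Y)$ is precisely $(a,b)+(b,a)$, lies in $d_2(L_2(Y))$ — which is the same computation as above.)
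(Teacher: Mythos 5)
Your proof is correct and follows essentially the same route as the paper's: surjectivity via $(v,v)=d_2(v,v,v)$, and injectivity by isolating the degenerate triples, noting that only the $(a,b,a)$-type terms contribute a nondegenerate part $(a,b)+(b,a)$ to the boundary, and using the hypothesis to realize that symmetrized edge as $d_2\bigl((a,b,t)+(b,a,t)\bigr)$ with $\{a,b,t\}$ a $2$-simplex. The paper performs the same substitution one step earlier, at the chain level (replacing $(r,s,r)$ by $(r,s,z)+(s,r,z)-(r,r,s)$ before applying $d_2$), but the key identity and the point where the hypothesis enters are identical.
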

\begin{proof} Let $f=(f_n)_n$ be the chain map $L_\bullet\to \Delta_\bullet$.
We begin by observing that if $x$ is a $0$-simplex, then in $\Delta_\bullet$ we have $(x,x)=d_2(x,x,x)$. Thus if $z=\sum_{i}n_i(x_i,y_i)+\sum_jm_j(z_j,z_j)$ is a $1$-cycle in 
$\Delta_1$ , where $x_i\not=y_i$ then $d_1(\sum_{i}n_i(x_i,y_i))=0$ and $z=f_1(\sum_{i}n_i(x_i,y_i))$ modulo boundaries and thus $H_1(L_\bullet)$ maps onto $H_1(\Delta_\bullet)$. 

Now suppose that $z=\sum_in_i(x_i,y_i)\in L_1$ has the property that $z=d_2(w)$ for some $w\in \Delta_2$. We must prove that $z\in d_2(L_2)$. We can write 
\[
w=\sum_jm_j(a_j,b_j,c_j)+\sum_kt_k(r_k,s_k,r_k)+\sum_\ell w_\ell(u_\ell,u_\ell,v_\ell)+\sum_m c_m(d_m,e_m,e_m).
\]
where the $a_j,b_j,c_j$ are distinct from each other and the $r_k\not=s_k$. Now, for each $k$, choose $z_k$ such that $\{ r_k,s_k,z_k\}$ is a $2$-simplex in $Y$. Observing 
that $d_2(r_k,s_k,r_k) =d_2((r_k, s_k,z_k)+(s_k,r_k,z_k)-(r_k,r_k,s_k))$, we can replace the terms in the second sum with terms of the same type as those in the first and third sums. 
Thus, without loss of generality, we can suppose 
\begin{eqnarray*}
w&=&\sum_jm_j(a_j,b_j,c_j)+\sum_\ell w_\ell(u_\ell,u_\ell,v_\ell)+\sum_m c_m(d_m,e_m,e_m)\\
&=&w'+ \sum_\ell w_\ell(u_\ell,u_\ell,v_\ell)+\sum_m c_m(d_m,e_m,e_m)
\end{eqnarray*}

where the $a_j,b_j,c_j$ are distinct from each other; i.e., $w'\in L_2(Y)$. Since $d_2(u,u,w)=(u,u)$ and $d_2(d,e,e)=(e,e)$ we have 
\[
z=d(w)=d(w')+\sum_\ell w_\ell(u_\ell,u_\ell)+\sum_mc_m(e_m,e_m).
\]
Since $z,d_2(w')\in L_2(Y)$, it follows that $\sum_\ell w_\ell(u_\ell,u_\ell)+\sum_mc_m(e_m,e_m)=0$ and hence $z=d_2(w')$ with $w'\in L_2(Y)$ as required.
\end{proof}

\begin{thm}\label{thm:h1} Let $A$ be a ring. Then there are isomorphisms
\[
H_1(L_\bullet(A))\cong U(A)^{\mathrm{ab}}\cong \left(\frac{K_2(2,A)}{C(2,A)}\right)^{\mathrm{ab}}.
\]
\end{thm}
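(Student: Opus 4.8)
The plan is to deduce this from Theorem~\ref{thm:main} together with Lemma~\ref{lem:h1} and the Hurewicz theorem, so almost no new work is required. First I would observe that, by Corollary~\ref{cor:3clique}, every edge of $\Gamma(A)$ is contained in a $3$-clique, so $Y(A)$ is a simplicial complex in which every $1$-simplex lies in a $2$-simplex. Hence Lemma~\ref{lem:h1} applies and the natural chain map $L_\bullet(A)=L_\bullet(Y(A))\to \Delta_\bullet(Y(A))$ induces an isomorphism $H_1(L_\bullet(A))\cong H_1(\Delta_\bullet(Y(A)))$. Since the ordered chain complex $\Delta_\bullet(Y)$ computes the homology of the geometric realization $|Y|$ for any simplicial complex $Y$, this gives $H_1(L_\bullet(A))\cong H_1(|Y(A)|)$.

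Next I would pass from homology to the fundamental group via Hurewicz. The space $|Y(A)|$ decomposes into path components which, by the remark following Theorem~\ref{thm:pi0}, are indexed by $E_2(A)\backslash\spl{2}{A}$; since $\gl{2}{A}$ acts transitively on the vertices $Y_0(A)$, these components are all homeomorphic, each with fundamental group $\pi_1(Y(A),\infty)$. Applying the Hurewicz theorem componentwise gives $H_1(|Y(A)|)\cong\bigoplus_{E_2(A)\backslash\spl{2}{A}}\pi_1(Y(A),\infty)^{\mathrm{ab}}$, which collapses to $\pi_1(Y(A),\infty)^{\mathrm{ab}}$ exactly when $\Gamma(A)$ is path-connected, i.e.\ (by Corollary~\ref{cor:pi0}) when $A$ is a $\mathrm{GE}_2$-ring. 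Finally, Theorem~\ref{thm:main} identifies $\pi_1(Y(A),\infty)$ with $U(A)$ and with $K_2(2,A)/C(2,A)$; abelianizing these isomorphisms yields
\[
H_1(L_\bullet(A))\cong U(A)^{\mathrm{ab}}\cong\left(\frac{K_2(2,A)}{C(2,A)}\right)^{\mathrm{ab}}.
\]

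The main point requiring care is the $\pi_0$-bookkeeping: when $A$ is not a $\mathrm{GE}_2$-ring the realization $|Y(A)|$ is disconnected, so the argument a priori only exhibits $H_1(L_\bullet(A))$ as a direct sum of copies of $U(A)^{\mathrm{ab}}$ indexed by $E_2(A)\backslash\spl{2}{A}$, and the displayed statement should be read with that understood — or simply restricted to the connected case, which is in any event the only case relevant to homological $1$-connectivity, since $H_0(L_\bullet(A))\cong\Z$ already forces $\Gamma(A)$ to be path-connected. A secondary technical point, used here as a black box, is the standard fact flagged in the footnote to Lemma~\ref{lem:h1} that the ordered chain complex (with degenerate simplices included) computes $H_\bullet(|Y|)$. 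Beyond this bookkeeping I expect no genuine obstacle, as the substantive content has already been established in Theorem~\ref{thm:main}.
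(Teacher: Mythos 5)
Your argument is exactly the paper's: Lemma~\ref{lem:h1} identifies $H_1(L_\bullet(A))$ with $H_1(\Delta_\bullet(Y(A)))\cong H_1(|Y(A)|)$, Hurewicz converts this to $\pi_1(Y(A),\infty)^{\mathrm{ab}}$, and Theorem~\ref{thm:main} finishes, so no further comparison is needed. Your caveat about disconnectedness is well taken rather than a defect: the paper's own proof writes $H_1(|Y(A)|)\cong\pi_1(Y(A),\infty)^{\mathrm{ab}}$ without comment, whereas for $A$ not a $\mathrm{GE}_2$-ring this group is really a direct sum of copies of $\pi_1(Y(A),\infty)^{\mathrm{ab}}$ indexed by $E_2(A)\backslash\spl{2}{A}$ (all components being isomorphic under the transitive $\gl{2}{A}$-action), so your explicit restriction to the connected case --- the only one relevant to homological $1$-connectivity --- is if anything more careful than the published argument.
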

\begin{proof}
By Lemma \ref{lem:h1}, we have \[
H_1(L_\bullet(A))\cong H_1(\Delta_\bullet(Y(A)))\cong H_1(|Y(A)|)\cong \pi_1(Y(A),\infty)^{\mathrm{ab}}. 
\]
By Theorem \ref{thm:main} $\pi_1(Y(A),\infty)^{\mathrm{ab}}\cong U(A)^\mathrm{ab} \cong \left(K_2(2,A)/C(2,A)\right)^{\mathrm{ab}}$.
\end{proof}

\begin{cor} 
If $A$ is a $\mathrm{GE}_2$-ring which is universal for $\mathrm{GE}_2$, then $H_0(L_\bullet(A))\cong \Z$ and $H_1(L_\bullet(A))=0$.
Furthermore, if $K_2(2,A)$ is solvable the converse holds.
\end{cor}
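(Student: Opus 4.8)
The plan is to assemble the corollary from three facts already in hand: Corollary~\ref{cor:pi0} ($\Gamma(A)$ is path-connected if and only if $A$ is a $\mathrm{GE}_2$-ring), the definition of \emph{universal for $\mathrm{GE}_2$} (namely $U(A)=1$), and Theorem~\ref{thm:h1} ($H_1(L_\bullet(A))\cong U(A)^{\mathrm{ab}}\cong(K_2(2,A)/C(2,A))^{\mathrm{ab}}$), together with the identification $H_0(L_\bullet(A))=H_0(\Delta_\bullet(Y(A)))=H_0(|Y(A)|)$ recorded at the start of this section. First I would dispose of the $H_0$ statement once and for all: $H_0(|Y(A)|)$ is the free abelian group on the set of path components of $|Y(A)|$, so $H_0(L_\bullet(A))\cong\Z$ if and only if $\Gamma(A)$ is path-connected, i.e.\ if and only if $A$ is a $\mathrm{GE}_2$-ring by Corollary~\ref{cor:pi0}.

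For the forward implication, suppose $A$ is a $\mathrm{GE}_2$-ring which is universal for $\mathrm{GE}_2$. The first property gives $H_0(L_\bullet(A))\cong\Z$ by the paragraph above; the second gives $U(A)=1$, hence $H_1(L_\bullet(A))\cong U(A)^{\mathrm{ab}}=1$ by Theorem~\ref{thm:h1}. For the converse, assume in addition that $K_2(2,A)$ is solvable and that $H_0(L_\bullet(A))\cong\Z$ and $H_1(L_\bullet(A))=0$. Again $H_0(L_\bullet(A))\cong\Z$ forces $A$ to be a $\mathrm{GE}_2$-ring. From $H_1(L_\bullet(A))=0$ and Theorem~\ref{thm:h1} (equivalently Theorem~\ref{thm:main} together with Theorem~\ref{thm:gamma}) we get $U(A)^{\mathrm{ab}}=0$, so $U(A)\cong K_2(2,A)/C(2,A)$ is a perfect group. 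But it is also a quotient of the solvable group $K_2(2,A)$, hence itself solvable; and a group that is both perfect and solvable is trivial, since then its derived series is constant and can terminate at $1$ only if the group is $1$. Thus $U(A)=1$, i.e.\ $A$ is universal for $\mathrm{GE}_2$.

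There is no genuine computational obstacle here; the argument is pure bookkeeping with the earlier theorems. The only point that needs a word of care is the solvability step: one should note that $C(2,A)$ is normal in $K_2(2,A)$ (implicit already in the group isomorphism $U(A)\cong K_2(2,A)/C(2,A)$ of Theorem~\ref{thm:gamma}), so that solvability of $K_2(2,A)$ indeed descends to $U(A)$, after which the elementary fact ``perfect $+$ solvable $\Rightarrow$ trivial'' closes the proof.
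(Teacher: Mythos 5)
Your proof is correct and follows essentially the same route as the paper: the forward direction via path-connectedness for $H_0$ and $U(A)^{\mathrm{ab}}=1$ for $H_1$ (Theorem \ref{thm:h1}), and the converse via the observation that solvability of $K_2(2,A)$ passes to the quotient $U(A)$, after which perfect plus solvable forces $U(A)=1$. Your treatment is in fact slightly more explicit than the paper's on two minor points: that $H_0(L_\bullet(A))\cong\Z$ in the converse direction \emph{forces} $A$ to be a $\mathrm{GE}_2$-ring, and that $C(2,A)$ is normal (indeed central) in $K_2(2,A)$.
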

\begin{proof} If $A$ is a $\mathrm{GE}_2$-ring  then $H_0(L_\bullet(A))\cong H_0(Y(A))\cong \Z[\pi_0(Y(A))]\cong \Z$ and if $A$ is universal for $\mathrm{GE}_2$ then 
$H_1(L_\bullet(A))\cong U(A)^{\mathrm{ab}}=1$.

If $K_2(2,A)$ is solvable, then so is $K_2(2,A)/C(2,A)$, and hence so is $U(A)$. In this case $U(A)=1$ if and only if $U(A)^{\mathrm{ab}}=1$.
\end{proof}

\begin{exa}\label{exa:lm}
$\Z[\frac{1}{m}]$ is  universal for $\mathrm{GE}_2$ if $m$ satisfies the condition described in Example \ref{exa:pi1m}. 
Thus
\[
H_0(L_\bullet(\Z\left[\frac{1}{m}\right]))=\Z\mbox{ and } H_1(L_\bullet(\Z\left[\frac{1}{m}\right]))=0
\]
for all such $m$. 
\end{exa}

\begin{exa}\label{exa:pi1p}
 Let $p\geq 5$ and let $m,\ell$,$\epsilon$ be as in Example \ref{exa:pi15}.  By Lemma \ref{lem:morita}, $\left(K_2(2,\Z[\frac{1}{p}])/C(2,\Z[\frac{1}{p}])\right)^{\mathrm{ab}}\not=0$
and hence $H_1(L_\bullet(\Z[\frac{1}{p}])\not=0$. More precisely, the image of the loop $L_{-\epsilon\cdot 2^{\ell+1},3m}$ in $L_1(\Z[\frac{1}{p}])$ represents a homology class of infinite order.  Thus, in particular,  
the cycle
\[
\left(\infty,0\right)+\left(0,-\frac{1}{2^{\ell+1}p}\right)+\left(-\frac{1}{2^{\ell+1}p},\frac{3m}{\epsilon p}\right)+\left(\frac{3m}{\epsilon p},\infty\right)\in 
L_1\left(\Z\left[\frac{1}{p}\right]\right)
\]
is not a boundary in the complex $L_\bullet(\Z[\frac{1}{p}])$. 
\end{exa}

\vskip 50pt
\appendix
\section{Rings which are universal for $\mathrm{GE}_2$ and generation of $K_2(2,A)$ }\label{sec:k2}

It is well-known that for  rings $A$, the condition that $A$ is universal for $\mathrm{GE}_n$ is equivalent to the condition that $K_2(n,A)$ be generated by
symbols (see, for example, Dennis, Stein \cite{dennisstein:dvr}, p228). In \cite{dennisstein:dvr}, the authors refer to unpublished notes of Dennis. However, the author is not aware that the proof has been published. Since we rely on it above, we prove here (in the case $n=2$) the more precise statement that $U(A)\cong K_2(2,A)/C(2,A)$. The proof, of course, is essentially routine checking since both groups are given by presentations in terms of lifts of some elementary matrices.

We begin by reviewing the definitions and some of the basic properties of the rank one Steinberg group $\mathrm{St}(2,A)$ and the group $K_2(2,A)$. 

The rank one Steinberg group $\St{2}{A}$ is defined by generators and relations as follows:
The generators are the terms 
\[
x_{12}(t)\mbox{ and } x_{21}(t), \quad t\in A
\]
and the defining relations are 
\begin{enumerate}
\item 
\[
x_{ij}(s)x_{ij}(t)=x_{ij}(s+t)
\]
for $i\not=j\in \{ 1,2\}$ and all $s,t\in A$, and 
\item For $u\in A^\times$, let 
\[
w_{ij}(u):= x_{ij}(u)x_{ji}(-u^{-1})x_{ij}(u)
\]
for $i\not=j\in \{ 1,2\}$. Then 
\[
w_{ij}(u)x_{ij}(t)w_{ij}(-u)=x_{ji}(-u^{-2}t)
\]
for all $u\in A^\times$, $t\in A$.
\end{enumerate}

There is a natural surjective homomorphism $\phi:\st{2}{A}\to E_2(A)$ defined by 
$\phi(x_{ij}(t))=E_{ij}(t)$ for all $t$. 
 By definition, $K_2(2,A)$ is the kernel of $\phi$. 

For $u\in A^\times$ and for $i\not=j\in \{ 1,2\}$, we let 
\[
h_{ij}(u):= w_{ij}(u)w_{ij}(-1).
\]
Note that 
\[
\phi(w_{12}(u))=\matr{0}{u}{-u^{-1}}{0} \mbox{ and } \phi(h_{12}(u))=\matr{u}{0}{0}{u^{-1}}.
\]

Note that, from the definitions and defining relation (1), 
for any $a\in A$ and for any unit $u$ we have 
\[
x_{ij}(a)^{-1}=x_{ij}(-a)\mbox{ and } w_{ij}(u)^{-1}=w_{ij}(-u). 
\]

The defining relation  (2) above thus immediately gives the following 
conjugation formula. 
 
\begin{lem}\label{lem:conjw}
Let $A$ be a  ring. Let $a\in A$ and $u\in A^\times$. For $i\not=j\in 
\{ 1,2\}$ 
\[
x_{ij}(a)^{w_{ij}(-u)}=x_{ji}(-u^{-2}a).
\]
\end{lem}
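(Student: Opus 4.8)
The plan is simply to unwind the conjugation notation and invoke defining relation (2) of $\st{2}{A}$ directly. Recall the convention that for group elements $g$ and $h$ one writes $h^g := g^{-1}hg$; hence by definition $x_{ij}(a)^{w_{ij}(-u)} = w_{ij}(-u)^{-1}\,x_{ij}(a)\,w_{ij}(-u)$.

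First I would apply the identity $w_{ij}(u)^{-1} = w_{ij}(-u)$ recorded immediately before the lemma. Replacing $u$ by $-u$ in that identity gives $w_{ij}(-u)^{-1} = w_{ij}(u)$, so that $x_{ij}(a)^{w_{ij}(-u)} = w_{ij}(u)\,x_{ij}(a)\,w_{ij}(-u)$. This is precisely the left-hand side of defining relation (2) of $\st{2}{A}$ with $t = a$, and that relation asserts it equals $x_{ji}(-u^{-2}a)$, which is the claim.

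There is essentially no obstacle here: the only points requiring care are the sign convention in the superscript conjugation notation $h^g = g^{-1}hg$ (rather than $ghg^{-1}$) and the $\pm u$ bookkeeping inside the symbols $w_{ij}$, both of which are settled by the single substitution $w_{ij}(-u)^{-1} = w_{ij}(u)$ above.
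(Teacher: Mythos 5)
Your proof is correct and is exactly the paper's argument: the paper derives the lemma "immediately" from defining relation (2) together with the identity $w_{ij}(u)^{-1}=w_{ij}(-u)$, which is precisely the unwinding of the conjugation notation that you carry out.
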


Since the right-hand-side is unchanged when $-u$ is substituted for  $u$, we deduce:
\begin{cor}\label{cor:conjw}
Let $A$ be a  ring. Let $a\in A$ and $u\in A^\times$. For $i\not=j\in 
\{ 1,2\}$ 
\[
x_{ij}(a)^{w_{ij}(u)^{-1}}=x_{ji}(-u^{-2}a)=x_{ij}(a)^{w_{ij}(u)}.
\]
and 
\[
x_{ji}(a)^{w_{ij}(u)}=x_{ij}(-u^2a).
\]
\end{cor}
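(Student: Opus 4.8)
The plan is to derive the whole statement from relation (2) of $\mathrm{St}(2,A)$, i.e.\ from Lemma \ref{lem:conjw}, using only two already-available facts: the identity $w_{ij}(u)^{-1}=w_{ij}(-u)$, and the trivial observation that $u\mapsto -u$ permutes $A^\times$ while leaving $u^{-2}$ unchanged. No further relations in the Steinberg group are needed, so this is really just a bookkeeping reformulation of relation (2), with the conjugation convention $x^{g}=g^{-1}xg$ kept in mind.

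First I would dispatch the two equalities in the first display. Since $w_{ij}(u)^{-1}=w_{ij}(-u)$, conjugation by $w_{ij}(u)^{-1}$ is the same as conjugation by $w_{ij}(-u)$, so $x_{ij}(a)^{w_{ij}(u)^{-1}}=x_{ij}(a)^{w_{ij}(-u)}=x_{ji}(-u^{-2}a)$ is immediate from Lemma \ref{lem:conjw}. For the remaining equality, apply Lemma \ref{lem:conjw} with the unit $-u$ in place of $u$: it reads $x_{ij}(a)^{w_{ij}(-(-u))}=x_{ji}(-(-u)^{-2}a)$, which is exactly $x_{ij}(a)^{w_{ij}(u)}=x_{ji}(-u^{-2}a)$ because $(-u)^{-2}=u^{-2}$.

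For the second display I would invert the conjugation map rather than substitute once more. From the first part we have, for every $c\in A$, the identity $x_{ij}(c)^{w_{ij}(u)^{-1}}=x_{ji}(-u^{-2}c)$; conjugating both sides by $w_{ij}(u)$ and using $(y^{g^{-1}})^{g}=y$ turns this into $x_{ij}(c)=x_{ji}(-u^{-2}c)^{w_{ij}(u)}$. Specializing $c:=-u^{2}a$, so that $-u^{-2}c=a$, yields $x_{ji}(a)^{w_{ij}(u)}=x_{ij}(-u^{2}a)$, completing the proof.

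I do not expect a real obstacle here: the content of the corollary is entirely contained in relation (2) once one is careful about (i) which group element is doing the conjugating — so that inverses and signs land correctly — and (ii) the fact that every assertion is made for an arbitrary unit $u$, which is what licenses replacing $u$ by $-u$. The only mild thing to track is the ordering of the indices $i\ne j$ when moving between $x_{ij}$ and $x_{ji}$; since Lemma \ref{lem:conjw} is stated for both orderings in $\{1,2\}$, nothing extra is needed there.
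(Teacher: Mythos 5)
Your proof is correct and follows essentially the same route as the paper: the first display is exactly the paper's observation that $w_{ij}(u)^{-1}=w_{ij}(-u)$ together with the invariance of $u^{-2}$ under $u\mapsto -u$ applied to Lemma \ref{lem:conjw}. The paper leaves the second display implicit, and your completion by inverting the conjugation (with the substitution $c=-u^2a$) is the natural and intended argument.
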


From the definition of $h_{ij}(u)$, we then obtain:
\begin{cor}\label{cor:conjh}
Let $A$ be a  ring. Let $a\in A$ and $u\in A^\times$. For $i\not=j\in 
\{ 1,2\}$ 
\[
x_{ij}(a)^{h_{ij}(u)}=x_{ij}(u^{-2}a)\mbox{ and }
x_{ij}(a)^{h_{ij}(u)^{-1}}=x_{ij}(u^2a).
\]
\end{cor}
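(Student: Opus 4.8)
The plan is to unwind the definition $h_{ij}(u) := w_{ij}(u)w_{ij}(-1)$ and apply Corollary \ref{cor:conjw} twice, being careful that conjugation composes as $x^{gh} = (x^g)^h$. First I would compute, using the first displayed identity of Corollary \ref{cor:conjw},
\[
x_{ij}(a)^{w_{ij}(u)} = x_{ji}(-u^{-2}a).
\]
Then I would conjugate the result by $w_{ij}(-1)$: the second displayed identity of Corollary \ref{cor:conjw}, applied with $u$ replaced by $-1$, reads $x_{ji}(b)^{w_{ij}(-1)} = x_{ij}(-(-1)^2 b) = x_{ij}(-b)$, so taking $b = -u^{-2}a$ gives
\[
x_{ij}(a)^{h_{ij}(u)} = \left(x_{ji}(-u^{-2}a)\right)^{w_{ij}(-1)} = x_{ij}(u^{-2}a),
\]
which is the first asserted formula.

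For the second formula I would avoid repeating the computation and argue by substitution. Since the first formula holds for every $a \in A$, replacing $a$ by $u^2 a$ yields $x_{ij}(u^2 a)^{h_{ij}(u)} = x_{ij}(a)$; conjugating both sides by $h_{ij}(u)^{-1}$ then gives $x_{ij}(a)^{h_{ij}(u)^{-1}} = x_{ij}(u^2 a)$, as required. (Alternatively one could observe that $h_{ij}(u)^{-1} = w_{ij}(1)w_{ij}(-u)$, using $w_{ij}(v)^{-1} = w_{ij}(-v)$, and rerun the same two-step conjugation; the substitution argument is shorter.)

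There is no conceptual obstacle here — the statement is a purely formal consequence of relation (2) of $\mathrm{St}(2,A)$, already packaged in Lemma \ref{lem:conjw} and Corollary \ref{cor:conjw}. The only thing that needs care is the bookkeeping: keeping straight that $x^{gh}$ means $(x^g)^h$ rather than the reverse, tracking the sign $(-1)^2 = 1$ contributed by conjugation by $w_{ij}(-1)$, and matching each step to the correct one of the two identities in Corollary \ref{cor:conjw} (the one sending $x_{ij}$ to $x_{ji}$ versus the one sending $x_{ji}$ to $x_{ij}$).
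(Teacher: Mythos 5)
Your proof is correct and follows exactly the route the paper intends: the paper states this corollary with no written proof beyond ``From the definition of $h_{ij}(u)$, we then obtain,'' i.e.\ unwinding $h_{ij}(u)=w_{ij}(u)w_{ij}(-1)$ and applying Corollary \ref{cor:conjw} twice, which is precisely what you do. Your substitution trick for the second formula is a clean way to avoid redoing the computation, and the sign bookkeeping checks out.
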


\begin{cor}\label{cor:wij}
Let $A$ be a ring.  Let $u,v\in A^\times$ and let $i\not= j\in \{ 1,2\}$.
\begin{enumerate}
\item
$w_{ij}(u)^{w_{ij}(v)}=w_{ji}(-v^{-2}u)$.
\item 
$w_{ij}(u)=w_{ji}(-u^{-1})$.
\end{enumerate}
\end{cor}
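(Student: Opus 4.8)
The plan is to prove part (1) by a direct computation in $\St{2}{A}$ — conjugating the defining expression for $w_{ij}(u)$ factor by factor — and then to obtain part (2) instantly as the special case $v=u$.

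First I would write $w_{ij}(u)=x_{ij}(u)x_{ji}(-u^{-1})x_{ij}(u)$ and conjugate by $w_{ij}(v)$, using that conjugation is multiplicative, $(abc)^{w_{ij}(v)}=a^{w_{ij}(v)}b^{w_{ij}(v)}c^{w_{ij}(v)}$. Corollary \ref{cor:conjw}, with $u$ there renamed to $v$, supplies the two needed conjugation formulas $x_{ij}(a)^{w_{ij}(v)}=x_{ji}(-v^{-2}a)$ and $x_{ji}(a)^{w_{ij}(v)}=x_{ij}(-v^2a)$. Applying these to the three factors gives
\[
w_{ij}(u)^{w_{ij}(v)}=x_{ji}(-v^{-2}u)\, x_{ij}(v^2u^{-1})\, x_{ji}(-v^{-2}u).
\]
The next step is to recognize the right-hand side as $w_{ji}(-v^{-2}u)$: by definition $w_{ji}(c)=x_{ji}(c)x_{ij}(-c^{-1})x_{ji}(c)$, and putting $c=-v^{-2}u$ one computes $-c^{-1}=v^{2}u^{-1}$ (here one uses that $A$ is commutative, so $(-v^{-2}u)^{-1}=-v^{2}u^{-1}$). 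Since Corollary \ref{cor:conjw} is stated for both choices of $i\neq j$ in $\{1,2\}$, the same computation is valid verbatim in either case, and this establishes (1).

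For part (2) I would set $v=u$ in part (1). On the left, $w_{ij}(u)^{w_{ij}(u)}=w_{ij}(u)$ because any element conjugated by itself is unchanged; on the right, $w_{ji}(-u^{-2}u)=w_{ji}(-u^{-1})$. Hence $w_{ij}(u)=w_{ji}(-u^{-1})$, which is (2).

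There is no genuine obstacle here: the whole argument is a short manipulation of the defining Steinberg relations, and the only point needing a little care is the bookkeeping of signs and exponents together with the (harmless) appeal to commutativity of $A$ when inverting $-v^{-2}u$. I would explicitly avoid the temptation to verify (1) and (2) by applying $\phi$ and checking matrix identities in $E_2(A)$, since that would only prove the identities after passing to $E_2(A)$ rather than in $\St{2}{A}$ itself; the direct computation above is what actually delivers the statement.
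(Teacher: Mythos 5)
Your proof is correct and is precisely the argument the paper intends: part (1) is the factor-by-factor conjugation of the defining expression for $w_{ij}(u)$ using Corollary \ref{cor:conjw} together with the definition of $w_{ji}$, and part (2) is the specialization $v=u$. The paper simply states this without writing out the three conjugations; your version fills in the same computation explicitly.
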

\begin{proof}\ 

\begin{enumerate}
\item This follows immediately from Corollary \ref{cor:conjw} and the definition of the $w_{ij}(u)$.
\item Let $u=v$ is part (1).
\end{enumerate}
\end{proof}

\begin{cor}\label{cor:hij}
Let $A$ be a ring. Then for all $u\in A^\times$ and $i\not=j\in\{ 1,2\}$ we have 
\[
h_{ji}(u)=h_{ij}(u)^{-1}\mbox{ in }\St{2}{A}.
\]
\end{cor}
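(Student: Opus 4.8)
The plan is to derive the identity directly from Corollary \ref{cor:wij}, which already records the two facts about the $w_{ij}(u)$ that we need: the conjugation formula $w_{ij}(u)^{w_{ij}(v)}=w_{ji}(-v^{-2}u)$ and the symmetry $w_{ij}(u)=w_{ji}(-u^{-1})$. Since $h_{ij}(u)=w_{ij}(u)w_{ij}(-1)$ by definition and $w_{ij}(v)^{-1}=w_{ij}(-v)$, the first step is to rewrite both sides of the claimed identity purely in terms of $w_{ij}$'s with the \emph{same} index pair $(i,j)$. On the one hand, $h_{ij}(u)^{-1}=w_{ij}(-1)^{-1}w_{ij}(u)^{-1}=w_{ij}(1)w_{ij}(-u)$. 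On the other hand, applying the symmetry $w_{ij}(u)=w_{ji}(-u^{-1})$ to each factor of $h_{ji}(u)=w_{ji}(u)w_{ji}(-1)$ — noting in particular that $w_{ji}(-1)=w_{ij}(1)$ — gives $h_{ji}(u)=w_{ij}(-u^{-1})w_{ij}(1)$ in $\St{2}{A}$.

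It then remains to prove the single identity $w_{ij}(-u^{-1})w_{ij}(1)=w_{ij}(1)w_{ij}(-u)$, equivalently $w_{ij}(-u^{-1})^{w_{ij}(1)}=w_{ij}(-u)$. Here I would invoke Corollary \ref{cor:wij}(1) with $v=1$ to obtain $w_{ij}(-u^{-1})^{w_{ij}(1)}=w_{ji}(u^{-1})$, and then Corollary \ref{cor:wij}(2) once more to rewrite $w_{ji}(u^{-1})=w_{ij}(-u)$. Chaining these substitutions yields $h_{ji}(u)=h_{ij}(u)^{-1}$, as desired.

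I do not anticipate any genuine obstacle: the whole argument is a short bookkeeping computation in which every ingredient is already available in the preceding corollaries. The only point requiring a little care is to keep the index swaps $(i,j)\leftrightarrow(j,i)$ together with the sign and inverse conventions straight when applying $w_{ij}(u)=w_{ji}(-u^{-1})$ (so that, e.g., $w_{ij}(-1)=w_{ji}(1)$ and $w_{ji}(-1)=w_{ij}(1)$), so that both sides of the asserted equality genuinely reduce to products of $w_{ij}$'s with a common index pair and can be compared via the one conjugation formula.
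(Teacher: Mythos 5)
Your proposal is correct and uses exactly the same ingredients as the paper's proof — both reduce to the identity $h_{ij}(u)^{-1}=w_{ij}(1)w_{ij}(-u)$ and then apply the conjugation formula and the symmetry $w_{ij}(u)=w_{ji}(-u^{-1})$ from Corollary \ref{cor:wij}; the only difference is that you normalize both sides to products of $w_{ij}$'s before comparing, whereas the paper transforms $h_{ij}(u)^{-1}$ directly into $w_{ji}(u)w_{ji}(-1)$ in a single chain. All the index swaps and sign conventions in your computation check out.
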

\begin{proof} Let $u\in A^\times$. Then
\begin{eqnarray*}
h_{ij}(u)^{-1}&=&w_{ij}(-1)^{-1}w_{ij}(u)^{-1}\\
&=&w_{ij}(1)w_{ij}(-u)\\
&=&w_{ij}(-u)^{w_{ij}(1)^{-1}}w_{ij}(1)\\
&=&w_{ji}(u)w_{ji}(-1)=h_{ji}(u)\mbox{ by Corollary \ref{cor:wij}}.\\
\end{eqnarray*}
\end{proof}

For $u,v\in A^\times$ the \emph{symbols} 
\[
c(u,v):= h_{12}(u)h_{12}(v)h_{12}(uv)^{-1}
\]
clearly lie in $K_2(2,A)$. 
The elements $c(u,v)$ are central in $\st{2}{A}$ by Corollary \ref{cor:conjh}. We let $C(2,A)$ denote the 
subgroup of $K_2(2,A)$ generated by these symbols. 

Our goal is to prove that $C(A)$ is isomorphic to $\St{2}{A}/C(2,A)$ and to deduce that $U(A)\cong K_2(2,A)/C(2,A)$. 

We first define the elements $\bar{y}(a):= \epsilon(-a)\epsilon(0)^3\in C(A)$ for any $a\in A$.  Thus $\psi(\bar{y}(a))=E_{12}(a)$ and,  analogously to Lemma \ref{lem:ya} for the elements $y(a)$, we have
\begin{lem}\label{lem:bary}
 Let $A$  be a ring.
\begin{enumerate}
\item $\bar{y}(a)\bar{y}(b)=\bar{y}(a+b)$ in $C(A)$  for all $a,b\in A$.
\item  For all $u\in A^\times$, $a\in A$ we have 
\[
\bar{y}(a)^{h(u)}=\bar{y}(u^{-2}a) \mbox{ in }C(A).
\]
\end{enumerate}
\end{lem}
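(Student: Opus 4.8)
The plan is to imitate the proof of Lemma \ref{lem:ya}, working entirely inside $C(A)$ with the three defining relations and the facts already recorded: $h(1)=1$, $h(-1)$ is central of order $2$, $\epsilon(0)^2=h(-1)$ (hence $\epsilon(0)^3=\epsilon(0)h(-1)=h(-1)\epsilon(0)$ and $\epsilon(0)^4=1$), and $h(u)^{-1}=h(u^{-1})$. The only preliminary identity I need beyond these is the commutation rule
\[
\epsilon(0)h(u)=h(u)^{-1}\epsilon(0),
\]
which is obtained at once from defining relation (3) with $a=0$, namely $h(u)\epsilon(0)h(u)=\epsilon(u^2\cdot 0)=\epsilon(0)$.

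For part (1) I would expand $\bar{y}(a)\bar{y}(b)=\epsilon(-a)\epsilon(0)^3\epsilon(-b)\epsilon(0)^3$, rewrite $\epsilon(0)^3=h(-1)\epsilon(0)$ and pull the central element $h(-1)$ to the left, obtaining $h(-1)\bigl(\epsilon(-a)\epsilon(0)\epsilon(-b)\bigr)\epsilon(0)^3$. Applying defining relation (2) to the bracketed middle term gives $\epsilon(-a)\epsilon(0)\epsilon(-b)=h(-1)\epsilon(-a-b)$, and then $h(-1)^2=1$ collapses the whole expression to $\epsilon(-a-b)\epsilon(0)^3=\bar{y}(a+b)$, as required.

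For part (2) I would compute
\[
\bar{y}(a)^{h(u)}=h(u)^{-1}\epsilon(-a)\epsilon(0)^3h(u)=\bigl(h(u)^{-1}\epsilon(-a)h(u)^{-1}\bigr)\bigl(h(u)\epsilon(0)^3h(u)\bigr).
\]
The first factor equals $\epsilon(-u^{-2}a)$ by defining relation (3) applied with $u$ replaced by $u^{-1}$ (using $h(u^{-1})=h(u)^{-1}$); the second factor equals $h(u)h(-1)\epsilon(0)h(u)=h(-1)\,h(u)\epsilon(0)h(u)=h(-1)\epsilon(0)=\epsilon(0)^3$, again by relation (3) with $a=0$ together with the centrality of $h(-1)$. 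Hence $\bar{y}(a)^{h(u)}=\epsilon(-u^{-2}a)\epsilon(0)^3=\bar{y}(u^{-2}a)$.

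Alternatively, and perhaps more conceptually, one notes that $\bar{y}(a)=\epsilon(0)\,y(-a)\,\epsilon(0)^{-1}$ in $C(A)$ (immediate from $\epsilon(0)^4=1$ and the definitions of $y$ and $\bar y$), so conjugation by $\epsilon(0)$ carries the family $\{y(a)\}$ bijectively onto $\{\bar y(a)\}$ with a sign change on the argument; parts (1) and (2) of Lemma \ref{lem:bary} then follow formally from parts (1) and (2) of Lemma \ref{lem:ya}, in the second case using $\epsilon(0)h(u)=h(u)^{-1}\epsilon(0)$ to move the conjugating $\epsilon(0)$ past $h(u)$. Either way there is no genuine obstacle: the argument is a short rearrangement of the three defining relations, and the only point to watch is the bookkeeping of the powers of $\epsilon(0)$ and the sign of the exponent on $u$.
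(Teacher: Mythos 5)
Your proof is correct and is exactly what the paper intends: the lemma is stated with no written proof beyond the remark that it holds ``analogously to Lemma \ref{lem:ya}'', and your direct computation is precisely that analogous argument (with the sign and the $u^{-2}$ exponent handled correctly). Your alternative route via $\bar{y}(a)=\epsilon(0)\,y(-a)\,\epsilon(0)^{-1}$ is also sound and is essentially the content of the paper's subsequent Lemma \ref{lem:eps}, so nothing further is needed.
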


We also require
\begin{lem}\label{lem:eps} For all $a\in A$, 
\[
y(a)^{\epsilon(0)}=\bar{y}(-a)\mbox{ and }  \bar{y}(a)^{\epsilon(0)}={y(-a)}.
\]
\end{lem}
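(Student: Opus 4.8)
The plan is to unwind the definitions of $y(a)$ and $\bar{y}(a)$ and use only the elementary identities in $C(A)$ already recorded just after the definition of $y(a)$: that $\epsilon(0)^2 = h(-1)$, that $h(-1)$ is central of order $2$, and that consequently $\epsilon(0)^4 = 1$. Recall $y(a) = \epsilon(0)^3\epsilon(a)$ and $\bar{y}(a) = \epsilon(-a)\epsilon(0)^3$, and that (as in Lemma \ref{lem:ya}) conjugation is written $g^{\epsilon(0)} := \epsilon(0)^{-1}g\,\epsilon(0)$.

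For the first identity I would compute
\[
y(a)^{\epsilon(0)} = \epsilon(0)^{-1}\bigl(\epsilon(0)^3\epsilon(a)\bigr)\epsilon(0) = \epsilon(0)^{2}\epsilon(a)\epsilon(0) = h(-1)\epsilon(a)\epsilon(0),
\]
using $\epsilon(0)^{-1}\epsilon(0)^3 = \epsilon(0)^2 = h(-1)$. On the other hand
\[
\bar{y}(-a) = \epsilon(a)\epsilon(0)^3 = \epsilon(a)\,\epsilon(0)^2\,\epsilon(0) = \epsilon(a)\,h(-1)\,\epsilon(0) = h(-1)\epsilon(a)\epsilon(0),
\]
the last step by centrality of $h(-1)$. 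Comparing the two expressions gives $y(a)^{\epsilon(0)} = \bar{y}(-a)$. For the second identity,
\[
\bar{y}(a)^{\epsilon(0)} = \epsilon(0)^{-1}\bigl(\epsilon(-a)\epsilon(0)^3\bigr)\epsilon(0) = \epsilon(0)^{-1}\epsilon(-a)\,\epsilon(0)^4 = \epsilon(0)^{-1}\epsilon(-a),
\]
using $\epsilon(0)^4 = 1$; and since $\epsilon(0)^{-1} = \epsilon(0)^3$ this equals $\epsilon(0)^3\epsilon(-a) = y(-a)$.

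There is essentially no obstacle here: the entire content is bookkeeping with the central element $h(-1)$ and the relation $\epsilon(0)^4 = 1$, both of which are immediate consequences of defining relation (2) of $C(A)$. The only points requiring care are keeping the conjugation convention consistent with Lemma \ref{lem:ya} (namely $g^h = h^{-1}gh$) and tracking the sign of the argument of $\bar{y}$ appearing on the right-hand sides.
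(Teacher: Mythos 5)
Your proof is correct and follows essentially the same route as the paper's: both unwind the definitions of $y(a)$ and $\bar{y}(a)$ and use only $\epsilon(0)^2=h(-1)$, the centrality of $h(-1)$, and $\epsilon(0)^4=1$. The only cosmetic difference is that you reduce both sides to the common expression $h(-1)\epsilon(a)\epsilon(0)$ and spell out the second identity, whereas the paper transforms one side into the other and leaves the second identity as "similar."
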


\begin{proof} Let $a\in A$. Then
\begin{eqnarray*}
y(a)^{\epsilon(0)}&=& \epsilon(0)^{-1}\epsilon(0)^3\epsilon(a)\epsilon(0)\\
&=& \epsilon(0)^{3}\epsilon(0)^3\epsilon(a)\epsilon(0)= h(-1)\epsilon(a)\epsilon(0)\\
&=& \epsilon(a)h(-1)\epsilon(0)=\epsilon(a)\epsilon(0)^3=\bar{y}(-a).\\
\end{eqnarray*}
The second identity is proved similarly.
\end{proof}

For $u\in A^\times$ we define the elements of $C(A)$
\[
w(u):=\bar{y}(u)y(-u^{-1})\bar{y}(u)\mbox{ and }\bar{w}(u):=y(u)\bar{y}(-u^{-1})y(u).
\]

\begin{lem}\label{lem:wu}. Let $u\in A^\times$. Then in $C(A)$ we have:
\begin{enumerate}
\item $w(u)=h(u)\epsilon(0)$ and $\bar{w}(u)=\epsilon(0)h(-u)$.
\item $w(-u)=w(u)^{-1}$ and $\bar{w}(-u)=\bar{w}(u)^{-1}$.
\end{enumerate}
\end{lem}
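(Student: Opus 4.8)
The plan is to prove part (1) by a direct computation inside $C(A)$, substituting the definitions $y(a)=\epsilon(0)^3\epsilon(a)$ and $\bar{y}(a)=\epsilon(-a)\epsilon(0)^3$ into the definitions of $w(u)$ and $\bar{w}(u)$ and then simplifying with the identities already recorded above: $\epsilon(0)^2=h(-1)$, $\epsilon(0)^4=1$, the centrality of $h(-1)$ with $h(-1)^2=1$, and the defining formula $h(u)=\epsilon(-u)\epsilon(-u^{-1})\epsilon(-u)$. For $w(u)$ I would first note $\bar{y}(u)y(-u^{-1})=\epsilon(-u)\epsilon(0)^{6}\epsilon(-u^{-1})=\epsilon(-u)h(-1)\epsilon(-u^{-1})$, so that
\[
w(u)=\bar{y}(u)\,y(-u^{-1})\,\bar{y}(u)=h(-1)\,\bigl(\epsilon(-u)\epsilon(-u^{-1})\epsilon(-u)\bigr)\,\epsilon(0)^3=h(-1)h(u)\epsilon(0)^3 ,
\]
and then use that $h(-1)$ is central together with $h(-1)\epsilon(0)^3=\epsilon(0)^{5}=\epsilon(0)$ to conclude $w(u)=h(u)\epsilon(0)$. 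The computation for $\bar{w}(u)$ is the mirror image: the same manipulations give $\bar{w}(u)=h(-1)\,\epsilon(0)^3\,\epsilon(u)\epsilon(u^{-1})\epsilon(u)$, and recognizing $\epsilon(u)\epsilon(u^{-1})\epsilon(u)=h(-u)$ (substitute $u\mapsto -u$ in the definition of $h$, using $(-u)^{-1}=-u^{-1}$) yields $\bar{w}(u)=\epsilon(0)h(-u)$.

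For part (2) I would simply invert the defining products, using $y(a)^{-1}=y(-a)$ (Lemma~\ref{lem:ya}(1)) and $\bar{y}(a)^{-1}=\bar{y}(-a)$ (Lemma~\ref{lem:bary}(1)): then
\[
w(u)^{-1}=\bigl(\bar{y}(u)\,y(-u^{-1})\,\bar{y}(u)\bigr)^{-1}=\bar{y}(-u)\,y(u^{-1})\,\bar{y}(-u)=w(-u),
\]
and likewise $\bar{w}(u)^{-1}=\bar{w}(-u)$ straight from the definition of $\bar{w}$. One could alternatively deduce (2) from (1) together with a conjugation formula of the form $\epsilon(0)^{-1}h(u)\epsilon(0)=h(u)^{-1}$, but that formula is not yet available at this point and would itself need proof, so the direct inversion is cleaner.

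I do not expect a genuine obstacle here: the lemma is essentially bookkeeping. The one point requiring care is that only $h(-1)$, and not a general $h(u)$, is known to be central in $C(A)$, so every rearrangement above must move a power of $\epsilon(0)$ or the element $h(-1)$ past other factors — never an arbitrary $h(u)$ — and one must track powers of $\epsilon(0)$ modulo $4$. A convenient sanity check along the way is to apply $\psi$ and verify $\psi(w(u))=D(u)W$ and $\psi(\bar{w}(u))=WD(-u)$ in $\spl{2}{A}$; since $\psi$ is not injective this only confirms the shape of the answer, but it guards against sign errors in the bookkeeping.
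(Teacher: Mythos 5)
Your proof is correct. For part (1) your computation is essentially identical to the paper's: both substitute the definitions of $y$ and $\bar{y}$, collapse $\epsilon(0)^6$ to $h(-1)$ using $\epsilon(0)^2=h(-1)$ and $\epsilon(0)^4=1$, slide the central element $h(-1)$ past the remaining $\epsilon$-factors, and recognize the resulting triple product as $h(u)$ (resp.\ $h(-u)$); the paper proves only the $w(u)$ identity and declares the $\bar{w}(u)$ case ``entirely similar'', whereas you carry out both. For part (2) you take a mildly different route: the paper multiplies the closed forms from part (1), computing $w(u)w(-u)=h(u)\epsilon(0)h(-u)\epsilon(0)=\left(h(u)\epsilon(0)h(u)\right)h(-1)\epsilon(0)=\epsilon(0)^4=1$ via relations (1) and (3), while you invert the defining word letter by letter using $y(a)^{-1}=y(-a)$ and $\bar{y}(a)^{-1}=\bar{y}(-a)$, which do follow from Lemmas \ref{lem:ya}(1) and \ref{lem:bary}(1) once one notes $y(0)=\bar{y}(0)=\epsilon(0)^4=1$. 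Your version is marginally more elementary since it avoids relation (3) altogether. One small correction to a side remark: the conjugation formula $\epsilon(0)^{-1}h(u)\epsilon(0)=h(u)^{-1}$ that you set aside as ``not yet available'' is in fact an immediate consequence of defining relation (3) with $a=0$ (which gives $h(u)\epsilon(0)h(u)=\epsilon(0)$), and it is precisely what the paper uses in its proof of part (2).
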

\begin{proof} We prove the first identity in each case. The second is entirely similar. 
\begin{enumerate}
\item Let $u\in A^\times$.
\begin{eqnarray*}
w(u)=\bar{y}(u)y(-u^{-1})&=& \epsilon(-u)\epsilon(0)^3\epsilon(0)^3\epsilon(-u^{-1})\epsilon(-u)\epsilon(0)^3\\
&=& \epsilon(-u)\epsilon(-u^{-1})\epsilon(-u)\epsilon(0)\mbox{ since $\epsilon(0)^2=h(-1)$ and $h(-1)^2=1$}\\
&=& h(u)\epsilon(0).
\end{eqnarray*}
\item For $u\in A^\times$ 
\begin{eqnarray*}
w(u)w(-u)=h(u)\epsilon(0)h(-u)\epsilon(0)=\left(h(u)\epsilon(0)h(u)\right)h(-1)\epsilon(0)=\epsilon(0)\epsilon(0)^2\epsilon(0)=1.
\end{eqnarray*}
\end{enumerate}
\end{proof}

\begin{cor}\label{cor:wu} For all $a\in A$, $u\in A^\times$, 
\[
y(a)^{\bar{w}(-u)}=\bar{y}(-u^{-2}a)\mbox{ and } \bar{y}(a)^{{w}(-u)}={y}(-u^{-2}a)
\]
\end{cor}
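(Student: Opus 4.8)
The plan is to derive Corollary~\ref{cor:wu} formally from the normal forms and conjugation rules already established, namely Lemma~\ref{lem:wu} (which gives $w(u)=h(u)\epsilon(0)$, $\bar w(u)=\epsilon(0)h(-u)$, and $w(-u)=w(u)^{-1}$, $\bar w(-u)=\bar w(u)^{-1}$), Lemma~\ref{lem:ya}(2) (conjugation of $y(a)$ by $h(u)$), and Lemma~\ref{lem:eps} (conjugation of $y(a)$ and $\bar y(a)$ by $\epsilon(0)$). No new input is needed; this is the $C(A)$-analogue of Corollary~\ref{cor:conjw} and is exactly the kind of identity that will be matched against the Steinberg relation later.

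First I would replace the conjugating elements by their inverses: since $\bar w(-u)=\bar w(u)^{-1}$ and $w(-u)=w(u)^{-1}$ by Lemma~\ref{lem:wu}(2), conjugation by $\bar w(-u)$ agrees with conjugation by $\bar w(u)^{-1}$, so that $y(a)^{\bar w(-u)}=\bar w(u)\,y(a)\,\bar w(u)^{-1}$, and likewise $\bar y(a)^{w(-u)}=w(u)\,\bar y(a)\,w(u)^{-1}$. For the first identity, substituting $\bar w(u)=\epsilon(0)h(-u)$ gives $y(a)^{\bar w(-u)}=\epsilon(0)\bigl(h(-u)\,y(a)\,h(-u)^{-1}\bigr)\epsilon(0)^{-1}$. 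By Lemma~\ref{lem:ya}(2), rewritten (using $h(v^{-1})=h(v)^{-1}$, a consequence of relation~(1) of $C(A)$) as $h(v)\,y(b)\,h(v)^{-1}=y(v^{-2}b)$, the inner conjugation yields $y(u^{-2}a)$, since $(-u)^{-2}=u^{-2}$. Finally, Lemma~\ref{lem:eps} in the form $\epsilon(0)\,y(c)\,\epsilon(0)^{-1}=\bar y(-c)$ turns this into $\bar y(-u^{-2}a)$, as required.

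The second identity is the mirror image of the first. Substituting $w(u)=h(u)\epsilon(0)$ gives $\bar y(a)^{w(-u)}=h(u)\bigl(\epsilon(0)\,\bar y(a)\,\epsilon(0)^{-1}\bigr)h(u)^{-1}$; now the other half of Lemma~\ref{lem:eps}, namely $\epsilon(0)\,\bar y(c)\,\epsilon(0)^{-1}=y(-c)$, gives $y(-a)$ inside, and one more application of $h(u)\,y(b)\,h(u)^{-1}=y(u^{-2}b)$ produces $y(-u^{-2}a)$. (Alternatively one could expand $w(u)=\bar y(u)y(-u^{-1})\bar y(u)$ directly via Lemmas~\ref{lem:ya}, \ref{lem:bary}, \ref{lem:eps}, but routing through Lemma~\ref{lem:wu} is shorter.)

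\textbf{Main obstacle.} There is no genuine obstacle: the statement is purely formal. The only care required is bookkeeping — keeping track of the conjugation convention $g^h=h^{-1}gh$, passing correctly between $h(v)^{-1}(\,\cdot\,)h(v)$ and $h(v)(\,\cdot\,)h(v)^{-1}$ via $h(v^{-1})=h(v)^{-1}$, the identity $(-u)^{-2}=u^{-2}$, and the fact that $\epsilon(0)^2=h(-1)$ is central of order $2$ (so $\epsilon(0)^{-1}=\epsilon(0)^3$), none of which causes real difficulty.
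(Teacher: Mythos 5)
Your proof is correct and follows essentially the same route as the paper: both reduce conjugation by $\bar{w}(-u)$ (resp.\ $w(-u)$) to the normal form of Lemma~\ref{lem:wu}(1) and then apply Lemma~\ref{lem:eps} together with the $h$-conjugation formulas of Lemmas~\ref{lem:ya} and \ref{lem:bary}. The only cosmetic difference is that the paper plugs in $\bar{w}(-u)=\epsilon(0)h(u)$ directly and conjugates by $\epsilon(0)$ first, whereas you pass through $\bar{w}(-u)=\bar{w}(u)^{-1}$ and conjugate by $h(-u)$ first; the bookkeeping (centrality of $\epsilon(0)^2=h(-1)$, $(-u)^{-2}=u^{-2}$) is handled correctly in both.
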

\begin{proof}
We prove the first identity. Let $a\in A$ and $u\in A^\times$.
\begin{eqnarray*}
y(a)^{\bar{w}(-u)}&=&
\left(y(a)^{\epsilon(0)}\right)^{h(u)}\mbox{ by Lemma \ref{lem:wu} (1)}\\
&=& \bar{y}(-a)^{h(u)}  \mbox{ by Lemma \ref{lem:eps}}\\
&=& \bar{y}(-u^{-2}a) \mbox{ by Lemma \ref{lem:bary} (2) }.\\
\end{eqnarray*}
\end{proof}

In view of the definition of $\St{2}{A}$, Lemmas \ref{lem:ya}and \ref{lem:bary}, the definition of $w(u)$ and $\bar{w}(u)$ and Corollary \ref{cor:wu} we deduce:

\begin{prop}\label{prop:alpha} Let $A$ be a ring. There is a well-defined group homomorphism 
\[
\alpha:\St{2}{A}\to C(A),\quad x_{21}(a)\mapsto y(a),\  x_{12}(a)\mapsto \bar{y}(a).
\]
Furthermore $\psi\circ\alpha=\phi:\St{2}{A}\to \spl{2}{A}$.
\end{prop}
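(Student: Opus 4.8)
The plan is to invoke the universal property of a group presented by generators and relations: since $\St{2}{A}$ is defined by the generators $x_{12}(t),x_{21}(t)$ and the relations (1) and (2) listed above, to construct $\alpha$ it suffices to check that the proposed images $y(a):=\alpha(x_{21}(a))$ and $\bar{y}(a):=\alpha(x_{12}(a))$ in $C(A)$ satisfy those two families of relations. First I would dispose of relation (1): the cases $(i,j)=(2,1)$ and $(i,j)=(1,2)$ translate respectively into $y(s)y(t)=y(s+t)$ and $\bar{y}(s)\bar{y}(t)=\bar{y}(s+t)$, which are exactly Lemma \ref{lem:ya}(1) and Lemma \ref{lem:bary}(1).

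For relation (2) the preliminary step is to identify $\alpha(w_{ij}(u))$. Straight from the definition $w_{ij}(u)=x_{ij}(u)x_{ji}(-u^{-1})x_{ij}(u)$ and the definitions of $w(u)$ and $\bar{w}(u)$ introduced just before Lemma \ref{lem:wu}, one gets $\alpha(w_{12}(u))=\bar{y}(u)y(-u^{-1})\bar{y}(u)=w(u)$ and $\alpha(w_{21}(u))=y(u)\bar{y}(-u^{-1})y(u)=\bar{w}(u)$. Now relation (2) for $(i,j)=(1,2)$ says $w_{12}(u)x_{12}(t)w_{12}(-u)=x_{21}(-u^{-2}t)$; since $w_{12}(-u)=w_{12}(u)^{-1}$ (and likewise $w(-u)=w(u)^{-1}$ by Lemma \ref{lem:wu}(2)), applying $\alpha$ turns this into the conjugation identity $\bar{y}(t)^{w(-u)}=y(-u^{-2}t)$, which is precisely the second identity of Corollary \ref{cor:wu}. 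Symmetrically, relation (2) for $(i,j)=(2,1)$ becomes $y(t)^{\bar{w}(-u)}=\bar{y}(-u^{-2}t)$, the first identity of Corollary \ref{cor:wu}. Hence all defining relations are respected, so $\alpha$ is a well-defined homomorphism.

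It remains to check $\psi\circ\alpha=\phi$. Both are group homomorphisms $\St{2}{A}\to\spl{2}{A}$, so it is enough to compare them on the generators: $\psi(\alpha(x_{21}(a)))=\psi(y(a))=E_{2,1}(a)=\phi(x_{21}(a))$ and $\psi(\alpha(x_{12}(a)))=\psi(\bar{y}(a))=E_{1,2}(a)=\phi(x_{12}(a))$, using the computations of $\psi(y(a))$ and $\psi(\bar{y}(a))$ already recorded in the text. This completes the proof.

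I do not expect a genuine obstacle here; as the appendix already warns, this is routine verification, and each relation has been pre-packaged into one of Lemmas \ref{lem:ya}, \ref{lem:bary}, \ref{lem:wu} or Corollary \ref{cor:wu}. The only places demanding care are the sign/inverse bookkeeping — keeping the convention $g^h=h^{-1}gh$ consistent with $w(-u)=w(u)^{-1}$ and $\bar{w}(-u)=\bar{w}(u)^{-1}$ — and making sure the correct one of $w(u)$ versus $\bar{w}(u)$ is paired with $w_{12}$ versus $w_{21}$.
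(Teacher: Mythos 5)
Your proposal is correct and follows exactly the route the paper intends: the paper gives no written proof, merely asserting that the proposition follows from the definition of $\St{2}{A}$, Lemmas \ref{lem:ya} and \ref{lem:bary}, the definitions of $w(u)$ and $\bar{w}(u)$, and Corollary \ref{cor:wu}, and your argument is precisely the spelled-out verification of those citations, with the sign and $w$-versus-$\bar{w}$ bookkeeping handled correctly. The check of $\psi\circ\alpha=\phi$ on generators is likewise the intended one.
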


\begin{cor}\label{cor:alpha}
$C(2,A)\subset \ker{\alpha}$ and hence $\alpha$  induces a homomorphism\\
 \[
\bar{\alpha}:\St{2}{A}/C(2,A)\to C(A). 
\]
\end{cor}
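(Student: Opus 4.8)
The plan is to verify that the homomorphism $\alpha$ of Proposition \ref{prop:alpha} annihilates each generating symbol $c(u,v)$ of $C(2,A)$. Since $C(2,A)$ is by definition generated by the $c(u,v)$, and these are central in $\St{2}{A}$ (so that $C(2,A)$ is a normal subgroup) by the remarks following Corollary \ref{cor:conjh}, the inclusion $C(2,A)\subseteq\ker{\alpha}$ will immediately produce the induced map $\bar{\alpha}\colon\St{2}{A}/C(2,A)\to C(A)$.

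First I would evaluate $\alpha$ on the elements $w_{12}(u)$, $u\in A^\times$. By definition $w_{12}(u)=x_{12}(u)x_{21}(-u^{-1})x_{12}(u)$, so Proposition \ref{prop:alpha} gives $\alpha(w_{12}(u))=\bar{y}(u)y(-u^{-1})\bar{y}(u)$, which is exactly the element $w(u)$ introduced just before Lemma \ref{lem:wu}; by Lemma \ref{lem:wu}(1) this equals $h(u)\epsilon(0)$ in $C(A)$. Next I would compute $\alpha$ on $h_{12}(u)=w_{12}(u)w_{12}(-1)$, obtaining $\alpha(h_{12}(u))=w(u)w(-1)=h(u)\epsilon(0)h(-1)\epsilon(0)$. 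Using that $h(-1)$ is central in $C(A)$, that $\epsilon(0)^2=h(-1)$, and that $h(-1)^2=1$, this collapses to $\alpha(h_{12}(u))=h(u)$.

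Consequently
\[
\alpha(c(u,v))=\alpha\bigl(h_{12}(u)\bigr)\alpha\bigl(h_{12}(v)\bigr)\alpha\bigl(h_{12}(uv)\bigr)^{-1}=h(u)h(v)h(uv)^{-1}=1
\]
by relation (1) in the presentation of $C(A)$. Hence every symbol lies in $\ker{\alpha}$, so $C(2,A)\subseteq\ker{\alpha}$, and $\alpha$ factors through the quotient, giving $\bar{\alpha}$.

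These computations are all short, and the real content of the statement has already been absorbed into Proposition \ref{prop:alpha} (well-definedness of $\alpha$) and Lemma \ref{lem:wu} (the identities $w(u)=h(u)\epsilon(0)$). So I do not expect a genuine obstacle here; the only point needing a word of care is that $C(2,A)$ be normal, which is guaranteed by the centrality of the symbols noted in the text after Corollary \ref{cor:conjh}.
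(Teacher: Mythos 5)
Your proposal is correct and takes essentially the same route as the paper: compute $\alpha(w_{12}(u))=w(u)$, deduce $\alpha(h_{12}(u))=h(u)$ via Lemma \ref{lem:wu}, and conclude $\alpha(c(u,v))=h(u)h(v)h(uv)^{-1}=1$. (Incidentally, you correctly attribute $h(u)h(v)=h(uv)$ to relation (1) of the presentation of $C(A)$, whereas the paper's proof cites relation (3) by a slip.)
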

\begin{proof}
Note that $\alpha(w_{12}(u))=w(u)$  for any $u\in A^\times$. Thus 
\begin{eqnarray*}
\alpha(h_{12}(u))=\alpha(w_{12}(u)w_{12}(-1))=w(u)w(-1)=h(u)\epsilon(0)h(-1)\epsilon(0)=h(u)\epsilon(0)^4=h(u).
\end{eqnarray*}
It follows that for any $u,v\in A^\times$
\[
\alpha(c(u,v))=\alpha(h_{12}(u)h_{12}(v)h_{12}(uv)^{-1})=h(u)h(v)h(uv)^{-1}=1
\]
by relation (3) in the definition of $C(A)$. 
\end{proof}

Now let $\tilde{C}(A)$ denote the free group on generators $\tilde{\epsilon}(a)$, $a\in A$. Define the group homomorphism  $\tilde{\gamma}:\tilde{C}(A)\to \St{2}{A}/C(2,A)$ by
$\tilde{\gamma}(\tilde{\epsilon}(a)):= w_{21}(-1)x_{21}(a)$ for $a\in A$. We will show that $\tilde{\gamma}$ induces a well-defined homomorphism $C(A)\to \St{2}{A}/C(2,A)$.

We will require the following:

\begin{lem}\label{lem:w-1}
Let $A$ be a ring.  In $\St{2}{A}/C(2,A)$ we have $w_{ij}(-1)^4=w_{ij}(1)^4=1$ for $i\not=j\in \{ 1,2\}$.
\end{lem}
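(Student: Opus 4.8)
The plan is to pass to the quotient group $\St{2}{A}/C(2,A)$ and exploit the fact that, once the symbols $c(u,v)=h_{12}(u)h_{12}(v)h_{12}(uv)^{-1}$ are killed, the assignment $u\mapsto h_{12}(u)$ becomes a homomorphism out of $A^\times$. Note first that the quotient is well-defined: the $c(u,v)$ are central in $\St{2}{A}$ (by Corollary \ref{cor:conjh}), so $C(2,A)$ is a central, hence normal, subgroup.

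First I would record the identity $h_{ij}(1)=1$, which already holds in $\St{2}{A}$: from the relation $w_{ij}(u)^{-1}=w_{ij}(-u)$ noted after the defining relations, taking $u=1$ gives $w_{ij}(1)^{-1}=w_{ij}(-1)$, so the definition $h_{ij}(1)=w_{ij}(1)w_{ij}(-1)$ collapses to $w_{ij}(1)w_{ij}(1)^{-1}=1$. I would also record the tautology $h_{ij}(-1)=w_{ij}(-1)w_{ij}(-1)=w_{ij}(-1)^2$, immediate from the definition of $h_{ij}$.

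Now the core step: in $\St{2}{A}/C(2,A)$ the relation $c(u,v)=1$ says $h_{12}(u)h_{12}(v)=h_{12}(uv)$, so $h_{12}(-1)^2=h_{12}(1)=1$ there. Combining with $h_{12}(-1)=w_{12}(-1)^2$ gives $w_{12}(-1)^4=h_{12}(-1)^2=1$ in the quotient, and then $w_{12}(1)^4=(w_{12}(-1)^{-1})^4=1$ since $w_{12}(1)=w_{12}(-1)^{-1}$. For the pair $(i,j)=(2,1)$ I would use Corollary \ref{cor:hij}, $h_{21}(u)=h_{12}(u)^{-1}$: since the $c(u,v)$ are central, $h_{21}(u)h_{21}(v)h_{21}(uv)^{-1}=c(u,v)^{-1}\in C(2,A)$, so the identical argument yields $h_{21}(-1)^2=1$ and hence $w_{21}(-1)^4=w_{21}(1)^4=1$ in $\St{2}{A}/C(2,A)$.

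I do not expect any genuine obstacle: the whole statement is a two-line manipulation of the defining relations once one observes that quotienting by the symbols turns $h_{12}$ into a homomorphism. The only point requiring a word of justification is that $C(2,A)$ is central, so that the quotient is a group at all, and that was already established in the excerpt.
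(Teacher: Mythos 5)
Your proof is correct and follows essentially the same route as the paper: both rest on $h_{ij}(1)=1$, the identity $h_{12}(-1)=w_{12}(-1)^2$, and the observation that $h_{12}(-1)^2=c(-1,-1)$ dies in the quotient. The only cosmetic difference is that the paper transfers the result to the indices $(2,1)$ via $w_{21}(1)=w_{12}(-1)$ (Corollary \ref{cor:wij}) where you use $h_{21}(u)=h_{12}(u)^{-1}$ (Corollary \ref{cor:hij}); both work.
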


\begin{proof} In $\st{2}{A}$, we have $h_{ij}(1)=w_{ij}(1)w_{ij}(-1)=w_{ij}(1)w_{ij}(1)^{-1}=1$. Thus $c(-1,-1)=h_{12}(-1)^2=w_{12}(-1)^4=1$ in 
 $\St{2}{A}/C(2,A)$ . Since $w_{21}(1)=w_{12}(-1)=w_{12}(1)^{-1}$ the result follows.
\end{proof}

\begin{lem}\label{lem:tildeh}
 Let $u\in A^\times$. Let $\tilde{h}(u):=\tilde{\epsilon}(-u)\tilde{\epsilon}(-u^{-1})\tilde{\epsilon}(-u)\in \tilde{C}(A)$. Then 
\[
\tilde{\gamma}(\tilde{h}(u))=h_{12}(u)\mbox{ in } \St{2}{A}/C(2,A).
\]
\end{lem}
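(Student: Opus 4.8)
The plan is to compute $\tilde\gamma(\tilde h(u))$ directly from the definition $\tilde\gamma(\tilde\epsilon(a)) = w_{21}(-1)x_{21}(a)$, working throughout in the quotient $\St{2}{A}/C(2,A)$ where the symbols $c(v,w)$, and in particular $c(-1,-1) = w_{12}(-1)^4$, are trivial (Lemma \ref{lem:w-1}). First I would expand
\[
\tilde\gamma(\tilde h(u)) = \bigl(w_{21}(-1)x_{21}(-u)\bigr)\bigl(w_{21}(-1)x_{21}(-u^{-1})\bigr)\bigl(w_{21}(-1)x_{21}(-u)\bigr).
\]
The key mechanism is to move each $w_{21}(-1)$ leftward past the preceding $x_{21}(\cdot)$ using the conjugation formula. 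Since $w_{21}(-1)^{-1} x_{21}(a) w_{21}(-1) = x_{21}(a)^{w_{21}(-1)}$ and by Corollary \ref{cor:conjw} (with the roles of indices $1,2$ interchanged, so $i=2$, $j=1$) $x_{21}(a)^{w_{21}(-1)} = x_{12}(-a)$, I can rewrite $x_{21}(a)\,w_{21}(-1) = w_{21}(-1)\,x_{12}(-a)$. Applying this repeatedly pushes all three $w_{21}(-1)$ factors to the left, collecting $w_{21}(-1)^3$ in front of an alternating product of $x_{12}$ and $x_{21}$ terms; tracking signs carefully gives something of the shape $w_{21}(-1)^3 \cdot x_{12}(\pm u)\,x_{21}(\pm u^{-1})\,x_{12}(\pm u)$.

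Next I would identify that inner product with a $w$-element. Recall $w_{12}(u) = x_{12}(u)x_{21}(-u^{-1})x_{12}(u)$; so the inner product should match $w_{12}(\pm u)$ for an appropriate sign, possibly up to replacing $u$ by $-u$ (which, as noted after Corollary \ref{cor:conjw}, often does not change the relevant element) — the exact sign bookkeeping is where I must be most careful. Then $w_{21}(-1)^3 = w_{21}(-1)^{-1}$ by Lemma \ref{lem:w-1}, and $w_{21}(-1) = w_{12}(1)$ by Corollary \ref{cor:wij}(2), so $w_{21}(-1)^{-1} = w_{12}(1)^{-1} = w_{12}(-1)$. Thus $\tilde\gamma(\tilde h(u))$ becomes $w_{12}(-1)\cdot w_{12}(u)$ (or $w_{12}(-1)w_{12}(-u)$), and using $w_{12}(-1)w_{12}(u) = h_{12}(-1)h_{12}(u) \cdot (\text{symbol})$ together with $w_{12}(u)w_{12}(-1) = h_{12}(u)$ — i.e. rearranging via centrality of symbols in $\St{2}{A}/C(2,A)$ — I would arrive at $h_{12}(u)$.

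The main obstacle I anticipate is purely the sign/index bookkeeping: getting every application of Corollary \ref{cor:conjw} to land with the correct sign and the correct pair of indices $(i,j)$ versus $(j,i)$, and making sure the leftover power of $w_{21}(-1)$ is exactly $w_{21}(-1)^{\pm1}$ rather than $w_{21}(-1)^{\pm 3}$ in a way that matters (it does not, by Lemma \ref{lem:w-1}, but one must check $w_{12}(-1)^2 = h_{12}(-1)$ is handled consistently). A secondary subtlety is the interchange of the two $w$-factors at the end: this requires the fact that $h_{12}(-1)$ is central modulo $C(2,A)$ (Corollary \ref{cor:conjh}) so that $w_{12}(-1)w_{12}(u)$ and $w_{12}(u)w_{12}(-1)$ agree modulo symbols, hence are equal in the quotient. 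Once these sign conventions are pinned down the computation is a short chain of substitutions.
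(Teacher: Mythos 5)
Your overall strategy is the same as the paper's: expand $\tilde{\gamma}(\tilde{h}(u))$, use the conjugation formula to collect the $w_{21}(-1)$ factors, recognize the remaining product of root elements as a single $w$-element, and finish with Lemma \ref{lem:w-1} and Corollary \ref{cor:wij}. But the two places you defer to ``careful bookkeeping'' are exactly where the proof lives, and as written one of them fails. The smaller issue is the shape of the intermediate term: pushing all three $w_{21}(-1)$'s to the left conjugates the first $x_{21}(-u)$ twice and the middle $x_{21}(-u^{-1})$ once, so the indices alternate the other way and one obtains
\[
w_{21}(-1)^3\, x_{21}(-u)\,x_{12}(u^{-1})\,x_{21}(-u),
\]
not $w_{21}(-1)^3\,x_{12}(\pm u)x_{21}(\pm u^{-1})x_{12}(\pm u)$. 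This is harmless --- in fact better, since the inner product is exactly $w_{21}(-u)$ by definition --- and it is the same intermediate expression the paper reaches (the paper gets there faster by moving only the \emph{middle} $w_{21}(-1)$ and using that $w_{21}(-1)^2=h_{21}(-1)$ is central).

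The genuine gap is the final interchange. You propose to pass from $w_{12}(-1)w_{12}(v)$ to $w_{12}(v)w_{12}(-1)=h_{12}(v)$ on the grounds that the two orderings agree modulo symbols because $h_{12}(-1)$ is central. They do not: conjugating $w_{12}(v)$ by $w_{12}(-1)^{\pm 1}$ gives $w_{21}(-v)=w_{12}(v^{-1})$ (Corollary \ref{cor:wij}), so
\[
w_{12}(-1)w_{12}(v)=w_{12}(v^{-1})w_{12}(-1)=h_{12}(v^{-1})=c(v,v^{-1})\,h_{12}(v)^{-1},
\]
which in $\St{2}{A}/C(2,A)$ is $h_{12}(v)^{-1}$, not $h_{12}(v)$. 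So the commutation you invoke would flip the answer to its inverse; centrality of the symbols is not enough to reorder two $w$-factors. The correct move --- and the paper's --- is the identity $A^{-1}B=B^{A}A^{-1}$ applied to $w_{21}(-1)^{-1}w_{21}(-u)$: by Corollary \ref{cor:wij}(1) one has $w_{21}(-u)^{w_{21}(-1)}=w_{12}(u)$, and $w_{21}(-1)^{-1}=w_{21}(1)=w_{12}(-1)$, giving $w_{12}(u)w_{12}(-1)=h_{12}(u)$ with the factors already in the order required by the definition of $h_{12}$. With that step repaired (and the intermediate shape corrected) your computation closes and agrees with the paper's.
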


\begin{proof} We have in $\St{2}{A}/C(2,A)$:
\begin{eqnarray*}
\tilde{\gamma}(\tilde{h}(u))&=&\tilde{\gamma}(\tilde{\epsilon}(-u)\tilde{\epsilon}(-u^{-1})\tilde{\epsilon}(-u))\\
&=&
w_{21}(-1)x_{21}(-u)w_{21}(-1)\cdot x_{21}(-u^{-1})w_{21}(-1)\cdot x_{21}(-u)\\
&=&w_{21}(-1)x_{21}(-u)w_{21}(-1)\cdot w_{21}(-1) x_{21}(-u^{-1})^{w_{21}(-1)}\cdot x_{21}(-u)\\
&=&w_{21}(-1)x_{21}(-u)w_{21}(-1)^2 x_{12}(u^{-1})x_{21}(-u)\\
&=&w_{21}(-1)^3\cdot x_{21}(-u) x_{12}(u^{-1})x_{21}(-u)\mbox{ since $w_{21}(-1)^2=h_{21}(-1)$ is central}\\
&=&w_{21}(-1)^{-1}w_{21}(-u)
=w_{21}(-u)^{w_{21}(-1)}w_{21}(-1)^{-1}\\
&=&w_{12}(u)w_{12}(-1)=h_{12}(u).\\
\end{eqnarray*}
\end{proof}

\begin{thm}\label{thm:gamma}
The map $\tilde{\gamma}:\tilde{C}(A)\to  \St{2}{A}/C(2,A)$ induces a well-defined isomorphism $\gamma:C(A)\to  \St{2}{A}/C(2,A)$. Furthermore, restriction of $\gamma$ to 
$U(A)$ induces an isomorphism 
\[
U(A)\cong \frac{K_2(2,A)}{C(2,A)}.
\]
\end{thm}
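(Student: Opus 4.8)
The plan is to exhibit a two-sided inverse to the homomorphism $\bar{\alpha}\colon\St{2}{A}/C(2,A)\to C(A)$ of Corollary~\ref{cor:alpha}. First I would verify that $\tilde{\gamma}$ kills the three families of relators defining $C(A)$, so that it descends to a homomorphism $\gamma\colon C(A)\to\St{2}{A}/C(2,A)$ with $\gamma(\epsilon(a))=w_{21}(-1)x_{21}(a)$ and, by Lemma~\ref{lem:tildeh}, $\gamma(h(u))=h_{12}(u)$. The relation $h(u)h(v)=h(uv)$ maps to $h_{12}(u)h_{12}(v)h_{12}(uv)^{-1}=c(u,v)$, which is trivial in $\St{2}{A}/C(2,A)$ by definition. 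The relation $\epsilon(a)\epsilon(0)\epsilon(b)=h(-1)\epsilon(a+b)$ reduces, via $x_{21}(0)=1$, the centrality of $h_{21}(-1)=w_{21}(-1)^2$, and the identity $h_{12}(-1)=h_{21}(-1)$ (a consequence of $c(-1,-1)=1$), to a short computation showing that both sides map to $h_{21}(-1)w_{21}(-1)x_{21}(a+b)$. For $h(u)\epsilon(a)h(u)=\epsilon(u^2a)$ one uses $w_{21}(-1)=w_{12}(1)$ (Corollary~\ref{cor:wij}(2)) to reduce to the two identities $w_{12}(1)^{-1}h_{12}(u)w_{12}(1)=h_{12}(u)^{-1}$ and $x_{21}(a)^{h_{12}(u)}=x_{21}(u^2a)$ in $\St{2}{A}/C(2,A)$; the latter is Corollaries~\ref{cor:conjh} and~\ref{cor:hij}, while the former follows from the conjugation formula of Corollary~\ref{cor:wij}(1), the relation $w_{21}(-1)^2=h_{21}(-1)$, and the fact that $h_{21}(u)h_{21}(v)=h_{21}(uv)$ in the quotient.

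Next I would check that $\gamma$ and $\bar{\alpha}$ are mutually inverse, working on generators. Using $\gamma(\epsilon(0))=w_{21}(-1)=w_{12}(1)$ together with $\alpha(w_{12}(1))=w(1)=h(1)\epsilon(0)=\epsilon(0)$ (from the computation in the proof of Corollary~\ref{cor:alpha} and Lemma~\ref{lem:wu}(1)), one finds $\bar{\alpha}(\gamma(\epsilon(a)))=\epsilon(0)\,y(a)=\epsilon(0)^4\epsilon(a)=\epsilon(a)$, so $\bar{\alpha}\circ\gamma$ is the identity on $C(A)$. Conversely $\gamma(\bar{\alpha}(x_{21}(a)))=\gamma(y(a))=\gamma(\epsilon(0))^3\gamma(\epsilon(a))=w_{21}(-1)^4x_{21}(a)=x_{21}(a)$ by Lemma~\ref{lem:w-1}, and $\gamma(\bar{\alpha}(x_{12}(a)))=\gamma(\bar{y}(a))=w_{21}(-1)x_{21}(-a)w_{21}(-1)^{-1}=x_{12}(a)$ by Lemma~\ref{lem:w-1} and Corollary~\ref{cor:conjw}; hence $\gamma\circ\bar{\alpha}$ is the identity on $\St{2}{A}/C(2,A)$. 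Thus $\gamma$ is an isomorphism with inverse $\bar{\alpha}$.

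Finally, since $C(2,A)\subseteq K_2(2,A)=\ker{\phi}$, the map $\phi\colon\St{2}{A}\to E_2(A)$ descends to $\bar{\phi}\colon\St{2}{A}/C(2,A)\to\spl{2}{A}$ with $\ker{\bar{\phi}}=K_2(2,A)/C(2,A)$, and the identity $\psi\circ\alpha=\phi$ of Proposition~\ref{prop:alpha} descends to $\psi\circ\bar{\alpha}=\bar{\phi}$; composing with $\gamma$ gives $\psi=\bar{\phi}\circ\gamma$. Since $\gamma$ is an isomorphism, it carries $U(A)=\ker{\psi}$ isomorphically onto $\ker{\bar{\phi}}=K_2(2,A)/C(2,A)$, which is the asserted isomorphism.

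The step I expect to be the main obstacle is the verification of the third defining relation of $C(A)$: one must move conjugations by $h_{12}(u)$ and $w_{12}(1)$ past $x_{21}(a)$ and confirm that every symbol-valued discrepancy appearing along the way lies in $C(2,A)$, and in particular establish the identity $w_{12}(1)^{-1}h_{12}(u)w_{12}(1)=h_{12}(u)^{-1}$ in the quotient, which rests on the triviality of $c(-1,-u)$ and $c(-1,-1)$ and the centrality of $h_{12}(-1)$.
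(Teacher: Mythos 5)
Your proposal is correct and follows essentially the same route as the paper: descend $\tilde{\gamma}$ by checking the three families of relators (with the third relation handled exactly via the identity $h_{12}(u)^{w_{21}(-1)}=h_{12}(u)^{-1}$ in $\St{2}{A}/C(2,A)$, which is what the paper uses), verify on generators that $\gamma$ and $\bar{\alpha}$ are mutually inverse, and then transport $U(A)=\ker{\psi}$ to $\ker{\bar{\phi}}=K_2(2,A)/C(2,A)$ using $\psi\circ\bar{\alpha}=\bar{\phi}$. The only differences are cosmetic (e.g.\ you compute $\bar{\alpha}(w_{21}(-1))$ as $w(1)$ via $w_{21}(-1)=w_{12}(1)$ where the paper uses $\bar{w}(-1)$ directly).
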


\begin{proof} 
To establish the existence and well-definednes of $\gamma$, we show that $\tilde{\gamma}$  vanishes on the three families  of defining relations for $C(A)$:

Let $u, v\in A^\times$. Then 
\[
\tilde{\gamma}(\tilde{h}(u)\tilde{h}(v))=h_{12}(u)h_{12}(v)=c(u,v)h_{12}(uv)=h_{12}(v)=\tilde{\gamma}(\tilde{h}(uv))\mbox{ in } \St{2}{A}/C(2,A).
\]

Let $a,b\in A$. Then in $\St{2}{A}/C(2,A)$ we have
\begin{eqnarray*}
\tilde{\gamma}(\tilde{\epsilon}(a)\tilde{\epsilon}(0)\tilde{\epsilon}(b))&=&w_{21}(-1)x_{21}(a)w_{21}(-1)w_{21}(-1)x_{21}(b)\\
&=&w_{21}(-1)^3x_{21}(a)x_{21}(b)=w_{21}(-1)^3x_{21}(a+b)\\
&=&w_{21}(-1)^2\cdot w_{21}(-1)x_{21}(a+b)\\
&=& h_{21}(-1)\cdot w_{21}(-1)x_{21}(a+b)=h_{12}(-1)\cdot w_{21}(-1)x_{21}(a+b)\\
&=& \tilde{\gamma}(\tilde{h}(-1)\tilde{\epsilon}(a+b)).\\
\end{eqnarray*}

Let $u\in A^\times$, $a\in A$.  In $\St{2}{A}/C(2,A)$ we have
\begin{eqnarray*}
\tilde{\gamma}(\tilde{h}(u)\tilde{\epsilon}(a)\tilde{h}(u))&=&h_{12}(u)w_{21}(-1)x_{21}(a)h_{12}(u)\\
&=& w_{21}(-1)h_{12}(u)^{w_{21}(-1)}x_{21}(a)h_{12}(u)\\
&=&w_{21}(-1)h_{21}(-u)x_{21}(a)h_{12}(u)\\
&=&w_{21}(-1)h_{12}(u)^{-1}x_{21}(a)h_{12}(u)\\
&=&w_{21}(-1)x_{21}(a)^{h_{12}(u)}= w_{21}(-1)x_{21}(u^2a)\\
&=&\tilde{\gamma}(\tilde{\epsilon}(u^2a))
\end{eqnarray*}
as required. 

Thus we have a well-defined homomorphism $\gamma:C(A)\to \St{2}{A}/C(2,A)$ given by $\epsilon(a)\mapsto w_{21}(-1)x_{21}(a)$. We verify that the maps $\gamma$ and $\bar{\alpha}$ 
are inverse to each other: 

For any $a\in A$, 
\begin{eqnarray*}
\gamma(\bar{\alpha}(x_{12}(a)))=\gamma(\bar{y}(a))&=&\gamma(\epsilon(-a)\epsilon(0)^3)\\
&=&w_{21}(-1)x_{21}(-a)w_{21}(-1)^3\\
&=&x_{21}(-a)^{w_{21}(-1)^{-1}}=x_{12}(a),
\end{eqnarray*}
and
\begin{eqnarray*}
\gamma(\bar{\alpha}(x_{21}(a)))=\gamma({y}(a))&=&\gamma(\epsilon(0)^3\epsilon(a))\\
&=&w_{21}(-1)^3\cdot w_{21}(-1)x_{21}(a)\\
&=& w_{21}(-1)^4x_{21}(a)=x_{21}(a).
\end{eqnarray*}

Conversely, for all $a\in A$,
\begin{eqnarray*}
\bar{\alpha}(\gamma(\epsilon(a)))=\bar{\alpha}(w_{21}(-1)x_{21}(a))=\bar{w}(-1)y(a)=\epsilon(0)\epsilon(0)^3\epsilon(a)=\epsilon(a)
\end{eqnarray*}
as required, since $\bar{w}(-1)=\epsilon(0)h(1)=\epsilon(0)$ by Lemma \ref{lem:wu} (1).

Thus $\bar{\alpha}$ and $\gamma$ are mutually inverse isomorphisms. Since $\psi\circ\bar{\alpha}=\bar{\phi}:\St{2}{A}/C(2,A)\to \spl{2}{A}$ it follows that 
restriction of $\gamma$ induces an isomorphism 
\[
U(A)=\ker{\psi}\cong \ker{\bar{\phi}}=\frac{K_2(2,A)}{C(2,A)}.
\]

\end{proof}




\begin{thebibliography}{10}

\bibitem{czz}
L. Cossu, P. Zanardo and U. Zannier.
\newblock Products of elementary matrices and non-Euclidean principal ideal domains.
\newblock  {\em J. Algebra}, 501, 182 -- 205, (2018).


\bibitem{cohn:gln}
P. M. Cohn.
\newblock On the structure of the $\mathrm{GL}_2$ of a ring.
\newblock {\em Inst. Hautes {\'E}tudes Sci. Publ. Math.}, 30 (1966), 5-33.

\bibitem{cooke:ge}
G. E.  Cooke.
\newblock A weakening of the Euclidean property for integral domains and applications to algebraic number theory. I.
\newblock {\em J. Reine Angew. Math.} 282 (1976), 133–156.

\bibitem{corohut:bloch}
Rodrigo Cuitun Coronado and Kevin Hutchinson.
\newblock Bloch Groups of Rings.
\newblock {\em arXiv:2201:04996}.

\bibitem{dmv}
R. K. Dennis, B. Magurn, L. Vaserstein.
\newblock Generalized Euclidean group rings.
\newblock  {\em J. Reine Angew. Math.} 351 (1984), 113–128.

\bibitem{dennisstein:dvr}
R. K. Dennis, M. R. Stein.
\newblock {$ K_2$  of discrete valuation rings.}
\newblock{\em Advances in Math.} 18 (1975), no. 2, 182–238.

\bibitem{hut:rbl11}
Kevin Hutchinson.
\newblock {A refined Bloch group and the third homology of $\mathrm{SL}_2$ of a
  field}.
\newblock {\em J. Pure Appl. Algebra}, 217:2003--2035, (2013).


\bibitem{hut:slr}
Kevin Hutchinson.
\newblock The third homology of {${\rm SL}_2$} of local rings.
\newblock {\em J. Homotopy Relat. Struct.}, 12(4):931--970, (2017).

\bibitem{hut:sl2Q}
Kevin Hutchinson.
\newblock The third homology of {${\rm SL}_2(\Bbb Q)$}.
\newblock {\em J. Algebra}, 570, 366--396, (2021).

\bibitem{morita:k2zs}
J. Morita.
\newblock{On the group structure of rank one $K_2$ of some $\mathbb{Z}_S$.}
\newblock{\em Bull. Soc. Math. Belg. Sér. A} 42 (1990), no. 3, 561–575.


\bibitem{morita:braid}
J. Morita.
\newblock {Braid relations, meta-abelianizations and the symbols  {$\{ p,-1\}$} in $K_2(2,Z[1/p])$. }
\newblock{\em J. Math. Soc. Japan} 47 (1995), no. 1, 131–141.

\bibitem{morita:mab}
J. Morita.
\newblock {Meta-abelianizations of $\mathrm{SL}(2,Z[1/p])$ and Dennis-Stein symbols. }
\newblock{\em Tsukuba J. Math.} 20 (1996), no. 1, 71–76.

\bibitem{spanier}
E. H. Spanier.
\newblock Algebraic Topology.
\newblock {\em McGraw-Hill Book Co., New York-Toronto, Ont.-London}, 1966.

\bibitem{vas}
 L. Vaserstein.
\newblock The group $\mathrm{SL}_2$ over Dedekind rings of arithmetic type. (Russian)
\newblock  {\em Mat. Sb. (N.S.)} 89(131) (1972), 313–322, 351.

\end{thebibliography}
\end{document}